\pgfplotsset{compat=1.7}
\newtheorem{definition}{Definition}
\newtheorem{lemma}[definition]{Lemma}
\newtheorem{theorem}[definition]{Theorem}
\newtheorem{corollary}[definition]{Corollary}
\newtheorem{proposition}[definition]{Proposition}
\newtheorem*{remark}{Remark}
\newcommand{\midd}[1]{\mathrel{}\middle#1\mathrel{}}
\newcommand{\Prb}[1] {\mathbb{P}\left[#1\right]}
\newcommand{\bbR} {\mathbb{R}}
\newcommand{\calO} {\mathcal{O}}
\newcommand{\abs}[1] {\left|#1\right|}
\newcommand{\1} {\mathbbm{1}}
\newcommand{\hpipe}{\rotatebox[origin=c]{90}{$|$}}
\newcommand{\norm}[1] {\left\lVert #1\right\rVert}
\newcommand{\dx}[1] {\;\mathrm{d}#1}
\newcommand{\bbN} {\mathbb{N}}
\newcommand{\transp} {\scriptscriptstyle \mathsf{T}}
\newcommand{\ivar} {\mathbf{i}}
\newcommand{\iprod}[2] {\left< #1 , \; #2 \right>}
\DeclareMathOperator*{\argmax}{arg\,max}
\DeclareMathOperator*{\argmin}{arg\,min}
\newcommand{\mathhalfcell}[1] {\begin{minipage}{7.7cm}#1\vspace{0cm}\end{minipage}}
\newcommand{\inlinehalfcell}[1] {\begin{minipage}{7.7cm}\vspace{0.2cm}\centering #1\vspace{0.2cm}\end{minipage}}
\newcommand{\inlinefullcell}[1] {\begin{minipage}{15.4cm}\vspace{0.2cm}\centering #1\vspace{0.2cm}\end{minipage}}
\renewcommand*\env@matrix[1][\arraystretch]{%
  \edef\arraystretch{#1}%
  \hskip -\arraycolsep
  \let\@ifnextchar\new@ifnextchar
  \array{*\c@MaxMatrixCols c}}
\definecolor{tumBlue}{RGB}{0,101,189}
\definecolor{tumDarkBlue}{RGB}{0,82,147}
\definecolor{tumOrange}{RGB}{227,114,34}
\definecolor{tumGreen}{RGB}{162,173,0}
\definecolor{commentpurple}{RGB}{102, 0, 102}
\colorlet{sectionblue}{tumBlue}
\definecolor{linkred}{RGB}{127,0,0}
\journal{Performance Evaluation}
\begin{document}

\begin{frontmatter}



\title{Formal Error Bounds for the State Space Reduction of Markov Chains}


\author[unibw]{Fabian Michel} 
\author[unibw]{Markus Siegle} 

\affiliation[unibw]{organization={Universit\"at der Bundeswehr M\"unchen, Institut f\"ur technische Informatik},
            addressline={Werner-Heisenberg-Weg 39}, 
            city={Neubiberg},
            postcode={85579}, 
            country={Germany}}

\begin{abstract}
  We study the approximation of a Markov chain on a reduced state space,
  for both discrete- and continuous-time Markov chains. In this context,
  we extend the existing theory of formal error bounds for the approximated
  transient distributions. As a special case, we consider aggregated (or lumped)
  Markov chains, where the state space reduction is achieved by partitioning the
  state space into macro states. In the discrete-time setting, we bound the stepwise
  increment of the error, and in the continuous-time setting, we bound the
  rate at which the error grows. In addition, the same error bounds can
  also be applied to bound how far an approximated stationary distribution
  is from stationarity. Subsequently, we compare these error bounds with
  relevant concepts from the literature, such as exact and ordinary lumpability,
  as well as deflatability and aggregatability. These concepts define stricter
  than necessary conditions to identify settings in which
  the aggregation error is zero. We also consider possible algorithms
  for finding suitable aggregations for which the formal error bounds are
  low, and we analyse first experiments with these algorithms on a range of different models.
\end{abstract}



\begin{keyword}
  Markov chains \sep State space reduction \sep Formal error bounds \sep Aggregation \sep Lumping
\end{keyword}

\end{frontmatter}



\section{Introduction}

State aggregation in dynamic systems has been studied extensively since
the 1960s (see \cite{ncd}). Due to the curse of dimensionality, models with
large state spaces are often computationally intractable without state space
reduction. One basic reduction technique is to aggregate multiple
states into a single state in the aggregated model, and conditions under which an aggregated Markov chain is again
a Markov chain are well known (see strong and weak lumpability in \cite{finmc,fincharweaklumpctmc}).
Various cases where exact transient or stationary probabilities of
the original model can be derived from an aggregated model have been
identified and analysed (see, e.g.~\cite{exactordlump}).

However, formal error bounds for the approximation error when exact
aggregation is not possible have only been studied rarely. \cite{exactordlump}
already gave upper and lower bounds for the transient distribution of
a Markov chain which are derived from an aggregated model. Much later,
\cite{adaptformalagg} has presented improved bounds for the transient
distributions of discrete-time Markov chains, which can also be applied
to continuous-time Markov chains via uniformisation. We extend
the theory developed in \cite{adaptformalagg} to support a more general
way of disaggregation in \autoref{sec:dtmcerror} and to the continuous-time domain without falling
back on uniformisation in \autoref{sec:ctmcerror}. In doing this,
we will consider a much more abstract way of state space reduction
where no intuitive meaning for an ``aggregated state'' exists -- every original
state can be a part of every aggregated state, where the influence
on an aggregated state is given by some (positive or negative) weight.
In contrast, previous work mostly focused on partitioning the state
space into groups of states forming a macro state in the aggregated model.

Subsequently, we analyse the cases where
the error bounds on the transient distribution are zero in \autoref{sec:errzero}.
We also consider how the bounds relate to approximations of the stationary
distribution in \autoref{ssec:stat_dist}. Furthermore, we show that
the error bounds from \cite{adaptformalagg} are tight for transient
distributions in general in \autoref{sec:erroptimal}, but not for stationary
distributions. To complete the theoretical analysis, we then
compare the error bounds with lumpability concepts
from \cite{finmc,exactordlump,mcaggreactnet,probalgmcagg} in \autoref{sec:lump}. Many of
these types of lumpability imply, but are not equivalent to the error bound being zero.

In \autoref{sec:findagg}, we present two different algorithms, one from
\cite{probalgmcagg} (which we extend slightly to improve performance
and numerical stability) and one based on \cite{exactperfequiv}, with the goal to identify an
aggregation resulting in low error bounds.
We then apply these algorithms to sample models in \autoref{ssec:experiments} and analyse the results.

\section{Preliminaries}

\subsection{Linear Algebra}

Let $v \in \bbR^m$ be a vector and $A \in \bbR^{m \times n}$ a matrix.
All considered vectors are column vectors. We use $\cdot^{\transp}$ to denote
the transpose of a matrix or vector, so for the column vector $v$, the
transpose $v^{\transp}$ is a row vector.
$\abs{v}$ and $\abs{A}$ denote the vector and matrix where the absolute
value was applied component-wise. For vectors $u, v \in \bbR^m$,
we use $\iprod{u}{v} := \sum_{i=1}^m u(i) v(i) = u^{\transp} v$ to denote the usual
inner product of two vectors, where $v(i)$ is the $i$-th entry of $v$.
We further use $\mathbf{1}_m$ to denote
the column vector $(1, \ldots, 1)^{\transp} \in \bbR^m$ and $\mathbf{1}_{m \times n} \in \bbR^{m \times n}$
to denote a matrix where every entry is one. In contrast, $I_n \in \bbR^{n \times n}$
denotes the identity matrix. We set $\norm{v}_1 := \iprod{\abs{v}}{\mathbf{1}_m} = \sum_{i=1}^m \abs{v(i)}$
and $\norm{A}_\infty = \max_{i = 1, \ldots, m} \sum_{j=1}^n \abs{A(i, j)}$.
We will later use that the matrix norm $\norm{\cdot}_\infty$ is submultiplicative: for matrices $A$
and $B$, we have $\norm{AB}_\infty \leq \norm{A}_\infty \cdot \norm{B}_\infty$
(see \cite[equations (2.3.3) and (2.3.10)]{matrixcomputations}).
Note that $\norm{v^{\transp}}_\infty = \norm{v}_1$ if we interpret $v$ as a matrix.
However, to avoid using different notation for the norms of row and column
vectors, we will only use $\norm{\cdot}_1$ to denote the norm of a (row or column)
vector, i.e.~$\norm{v}_1 = \norm{v^{\transp}}_1 = \sum_{i=1}^m \abs{v(i)}$.
As $\norm{\cdot}_1$ is only used for vectors and $\norm{\cdot}_\infty$ only
for matrices in this paper, this will (hopefully) not cause any confusion.

\begin{lemma}
  \label{lem:1norm_mult}
  For a vector $v \in \bbR^m$ and a matrix $A \in \bbR^{m \times n}$, we have:
  \begin{align*}
    \norm{v^{\transp}A}_1 \leq \begin{cases}
      \iprod{\abs{v}}{\abs{A} \cdot \mathbf{1}_n} \\
      \norm{v}_1 \cdot \norm{A}_\infty
    \end{cases}
  \end{align*}
\end{lemma}

\begin{proof}
  We have
  \begin{align*}
    \norm{v^{\transp}A}_1 
    &= \sum_{j=1}^n \abs{\sum_{i=1}^m v(i) \cdot A(i, j)}
    \leq \sum_{j=1}^n \sum_{i=1}^m \abs{v(i)} \cdot \abs{A(i, j)} \\
    &= \sum_{i=1}^m \abs{v(i)} \sum_{j=1}^n \abs{A(i, j)}
    \begin{cases}
      = \iprod{\abs{v}}{\abs{A} \cdot \mathbf{1}_n} \\
      \leq \left(\sum_{i=1}^m \abs{v(i)}\right)\left(\max_{i=1,\ldots,m} \sum_{j=1}^n \abs{A(i, j)}\right)
      = \norm{v}_1 \cdot \norm{A}_\infty
    \end{cases} \qedhere
  \end{align*}
\end{proof}

We will use the term stochastic matrix to denote matrices $A \in \bbR^{m \times n}$
(where it is not necessarily the case that $m = n$) whose entries are all
non-negative and where every row sums to $1$ (i.e.~$A \cdot \mathbf{1}_n = \mathbf{1}_m$).

\begin{lemma}
  \label{lem:matexp_norm_bound}
  Let $A \in \bbR^{n \times n}$ and $t \in \bbR$. Then
  \begin{align*}
    \norm{e^{At}}_\infty \leq e^{t \norm{A}_\infty}
  \end{align*}
\end{lemma}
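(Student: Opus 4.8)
The plan is to work directly from the power-series definition of the matrix exponential, $e^{At} = \sum_{k=0}^{\infty} \frac{t^k A^k}{k!}$, and to bound its $\norm{\cdot}_\infty$-norm summand by summand. First I would invoke the triangle inequality to move the norm inside the sum, obtaining $\norm{e^{At}}_\infty \leq \sum_{k=0}^{\infty} \frac{\abs{t}^k}{k!}\,\norm{A^k}_\infty$, where the scalar factor $\abs{t}^k$ comes out of $\norm{t^k A^k}_\infty$.

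Next I would control each $\norm{A^k}_\infty$ using the submultiplicativity of $\norm{\cdot}_\infty$ recalled earlier in the excerpt. Applying it iteratively gives $\norm{A^k}_\infty \leq \norm{A}_\infty^k$ for every $k \geq 1$, while for $k = 0$ we have $\norm{A^0}_\infty = \norm{I_n}_\infty = 1 = \norm{A}_\infty^0$. Substituting this bound yields $\norm{e^{At}}_\infty \leq \sum_{k=0}^{\infty} \frac{(\abs{t}\,\norm{A}_\infty)^k}{k!} = e^{\abs{t}\,\norm{A}_\infty}$, recognising the right-hand side as the ordinary scalar exponential series. For the relevant case $t \geq 0$ this is precisely the claimed bound $e^{t\norm{A}_\infty}$.

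The one step that requires genuine care is the interchange of the norm with the infinite sum, so I would make it rigorous rather than formal: I would first establish the inequality for the partial sums $S_N = \sum_{k=0}^{N} \frac{t^k A^k}{k!}$, where the finite triangle inequality and submultiplicativity apply verbatim, and then pass to the limit $N \to \infty$, using that $S_N \to e^{At}$ entrywise (hence in $\norm{\cdot}_\infty$) and that the scalar bounds $\sum_{k=0}^{N}\frac{(\abs{t}\norm{A}_\infty)^k}{k!}$ increase to $e^{\abs{t}\norm{A}_\infty}$. I do not expect any real difficulty beyond this routine limiting argument; the absolute convergence of the matrix exponential in the submultiplicative norm $\norm{\cdot}_\infty$ is exactly what makes the term-by-term estimate legitimate.
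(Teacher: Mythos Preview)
Your proposal is correct and follows essentially the same route as the paper: expand $e^{At}$ as its power series, apply the triangle inequality, use submultiplicativity to bound $\norm{A^k}_\infty \leq \norm{A}_\infty^k$, and recognise the scalar exponential series. You are in fact slightly more careful than the paper in two respects: you make the passage to the limit via partial sums explicit, and you track the absolute value $\abs{t}$ (the paper silently writes $t^k$, which is only legitimate for $t \geq 0$; the lemma is only ever applied with $t \geq 0$).
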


\begin{proof}
  By submultiplicativity of $\norm{\cdot}_\infty$ for matrices, we have
  \begin{align*}
    \norm{e^{At}}_\infty
    &= \norm{\sum_{k \geq 0} \frac{t^k A^k}{k!}}_\infty
    \leq \sum_{k \geq 0} \norm{\frac{t^k A^k}{k!}}_\infty
    = \sum_{k \geq 0} \frac{t^k}{k!} \norm{A^k}_\infty
    \leq \sum_{k \geq 0} \frac{t^k}{k!} \norm{A}_\infty^k
    = e^{t \norm{A}_\infty} \qedhere
  \end{align*}
\end{proof}

\subsection{Markov chains and state space reduction}
\label{ssec:prelim_mc}

We consider time-homogeneous discrete- and continuous-time
Markov chains (DTMCs and CTMCs) on the finite state space $S = \{1, \ldots, n\}$.
The dynamics are given by the stochastic transition matrix $P \in \bbR^{n \times n}$ for DTMCs,
where we have $P(i, j) = \Prb{X_{k+1} = j \midd| X_k = i}$ if $X_k$ denotes the
state of the DTMC at time $k$. For CTMCs, the dynamics are defined via the
generator matrix $Q \in \bbR^{n \times n}$, where $Q(i, j)$ is the transition rate from $i$ to $j$,
and $Q(i, i) = - \sum_{j \neq i} Q(i, j)$.
Given an initial distribution $p_0 \in \bbR^n$, the transient distribution
of a DTMC (respectively CTMC) is given by $p_k^{\transp} = p_0^{\transp} P^k$ (respectively $p_t^{\transp} = p_0^{\transp} e^{Qt}$).

We want to reduce the state space of the Markov chain to speed up computation
of various properties. We often refer to state space reduction as aggregation,
even though we take a very abstract view of aggregation: there are not necessarily groups of
states which are aggregated into one macro state. Instead, we define the
aggregation of a Markov chain with an aggregated state space of dimension $m$ (where $m \leq n$) as follows: given a disaggregation matrix
$A \in \bbR^{m \times n}$ (which can be arbitrary), an (arbitrary) aggregated step
matrix $\Pi \in \bbR^{m \times m}$ for DTMCs, an (arbitrary) aggregated
evolution matrix $\Theta \in \bbR^{m \times m}$ for CTMCs, and an (arbitrary) aggregated initial vector $\pi_0 \in \bbR^m$,
we approximate the dynamics of the original chain by setting
$\widetilde{p}_k^{\transp} := \pi_k^{\transp} A := \pi_0^{\transp} \Pi^k A$ and
$\widetilde{p}_t^{\transp} := \pi_t^{\transp} A := \pi_0^{\transp} e^{\Theta t} A$.
$\widetilde{p}_k$ and $\widetilde{p}_t$ are intended
to approximate the transient distributions of the original Markov chains, i.e.~$p_k$ and $p_t$.

In this abstract view of aggregation, we do not require that $\Pi$ is stochastic, that
$\Theta$ is a generator or that $\pi_0$ is a probability distribution. However,
intuitively, $\Pi$ should be an approximation of the step dynamics of the
DTMC in a lower-dimensional state space, and it often makes sense to choose
a stochastic matrix $\Pi$, which would imply that we approximate the
original chain with a DTMC with fewer states. The more abstract view
with arbitrary matrices $\Pi$ allows us, in theory, to approximate
the step dynamics in the lower-dimensional state space with any linear map.
We opt for the greater generality since most of the results presented in this
paper already hold in this very abstract setting.

In practical applications, a typical choice would be to choose a
stochastic matrix $\Pi$, a generator matrix $\Theta$, a probability distribution
$\pi_0$, and a stochastic matrix $A$ (the latter guarantees that
$\widetilde{p}_k$ is a probability distribution as well). We call $A$ disaggregation
matrix since $A$ describes how to blow up the aggregated transient
distribution $\pi_k$ to the full-state-space approximation $\widetilde{p}_k$
via the equation $\widetilde{p}_k^{\transp} = \pi_k^{\transp} A$, which
corresponds to disaggregating $\pi_k$.

The most commonly studied type of aggregation is even more restrictive in
the possible choices for $\Pi$, $\Theta$, $A$ and $\pi_0$. Usually, the
state space $S$ of the original chain is partitioned into aggregates
by some partition $\Omega = \{\Omega_1, \ldots, \Omega_m\}$ of $S$, with $\sigma \in \Omega$
being a subset of $S$ which represents all states belonging to one aggregate. The
aggregation function $\omega : S \to \Omega$ maps a state $s$ to
the aggregate to which $s$ belongs, i.e.~$s \in \omega(s)$. Instead
of an arbitrary disaggregation matrix $A$, we define probability distributions $\alpha_\sigma \in \bbR^n$ with
support on $\sigma \in \Omega$. As a shorthand, we write $\alpha(s) := \alpha_{\omega(s)}(s)$.
The value $\alpha(s)$ should approximate the conditional probability of being in state $s$
when we know that we are in aggregate $\omega(s)$, i.e.~the probability
$\Prb{X_k = s \midd| X_k \in \omega(s)}$. This probability is in general
dependent on time, but we only consider time-independent approximations $\alpha$.
We then define the disaggregation matrix $A$ and the aggregation matrix $\Lambda$ as follows:
\begin{align*}
  \Lambda = \begin{pmatrix}
    | &  & | \\
    \1_{\Omega_1} & \hdots & \1_{\Omega_m} \\
    | &  & |
  \end{pmatrix} \in \bbR^{n \times m}, \;\;
  A = \begin{pmatrix}
    \hpipe \, \alpha_{\Omega_1}^{\transp} \, \hpipe \\
    \vdots \\
    \hpipe \, \alpha_{\Omega_m}^{\transp} \, \hpipe
  \end{pmatrix} \in \bbR^{m \times n}
  \qquad \textrm{(note: } A\Lambda = I \textrm{)}
\end{align*}
where $\1_{\sigma} \in \bbR^n$ is defined by
\begin{align*}
  \1_{\sigma}(s) = \begin{cases}
    1 & \textrm{ if } s \in \sigma \\
    0 & \textrm{ otherwise}
  \end{cases}
\end{align*}
A natural definition for $\Pi$ and $\Theta$ is then given by
$\Pi = A P \Lambda$ and $\Theta = A Q \Lambda$, which will ensure
that $\Pi$ is stochastic and that $\Theta$ is a generator.
In this case, $\Pi(\rho, \sigma)$ for $\rho,\sigma \in \Omega$ is
an approximation of the probability to transition from one aggregate state
into another, that is, an approximation of $\Prb{X_{k+1} \in \sigma \midd| X_k \in \rho}$.
Note that this probability may also depend on time (i.e.~on $k$) in general, in contrast to the
probability $\Prb{X_{k+1} = s \midd| X_k = r}$ for $r,s \in S$. However, we again consider only
time-independent approximations of $\Prb{X_{k+1} \in \sigma \midd| X_k \in \rho}$.
Simlarly, for CTMCs, we should have
\begin{align*}
  \Theta(\rho, \sigma) \approx \lim_{u \to 0} \frac{\Prb{X_{t+u} \in \sigma \midd| X_t \in \rho}}{u}
\end{align*}
if we aim at a faithful approximation of the dynamics.
Furthermore,
$\pi_0^{\transp} = p_0^{\transp} \Lambda$ is the natural choice for
the initial distribution when working with actual aggregates.

\begin{definition}
  \label{def:compat}
  Given a partition $\Omega$ of the state space of a DTMC or CTMC,
  we call a probability distribution $p$ on the state space $S$ \textbf{compatible}
  with distributions $\alpha_\sigma$ with support on $\sigma \in \Omega$
  if $p^{\transp} \Lambda A  = p^{\transp}$.
\end{definition}

Compatibility of $p$ and the distributions $\alpha$ means that
\begin{align*}
  \alpha(s) = \frac{p(s)}{\sum_{s' \in \omega(s)} p(s')} \textrm{ for all } s \in S \textrm{ s.t. } \sum_{s' \in \omega(s)} p(s') > 0
\end{align*}

To make it easier to identify this special case of state space reduction via the partition $\Omega$,
we will always use $r,s$ to denote the states of the original chain and
$\rho,\sigma$ to denote the aggregates in this context, and we will call
this type of aggregation \textbf{weighted state space partitioning}, while we will use
$i,j$ to denote the states of the original chain and $\chi, \phi$ to denote the abstract states
of the aggregated chain in the more abstract context where we allow arbitrary
$\Pi$, $\Theta$, $A$ and $\pi_0$.

\section{Bounding the approximation error}

\subsection{Error bounds for DTMCs}
\label{sec:dtmcerror}

In this section, we derive formal error
bounds for the difference between the actual transient distribution $p_k$
of the Markov chain and the approximation $\widetilde{p}_k$ of this distribution,
obtained after state space reduction. We loosely follow \cite{adaptformalagg} in
our derivation, but a few adaptions are necessary as we consider a more general
setting for aggregation. \cite[Theorem 11]{exactordlump} also gave
error bounds similar to \cite{adaptformalagg}, but the bounds in \cite{adaptformalagg} are slightly
better. For now, we consider the disaggregation matrix $A$,
the aggregated step matrix $\Pi$, and the aggregated initial vector $\pi_0$ as arbitrary but fixed.

\begin{theorem}
  \label{thm:dtmc_bound}
  Let $e_k$ be the error vector after $k$ steps, i.e.
  \begin{align*}
    e_k^{\transp} = \widetilde{p}_k^{\transp} - p_k^{\transp} = \pi_0^{\transp} \Pi^k A - p_0^{\transp} P^k
  \end{align*}
  Then, the following hold:
  \begin{enumerate}[(i)]
    \item \label{thm:dtmc_bound_precise} $\displaystyle \norm{e_k}_1 \leq \norm{\pi_0^{\transp} A - p_0^{\transp}}_1 + \sum_{j=0}^{k-1} \iprod{\abs{\pi_j}}{\abs{\Pi A - A P} \cdot \mathbf{1}_n}$
    \item \label{thm:dtmc_bound_general} $\displaystyle \norm{e_k}_1 \leq \norm{\pi_0^{\transp} A - p_0^{\transp}}_1 + \norm{\pi_0}_1 \cdot \norm{\Pi A - A P}_\infty \cdot \begin{cases}
      \frac{\norm{\Pi}^k_\infty - 1}{\norm{\Pi}_\infty - 1} & \textrm{ if } \norm{\Pi}_\infty \neq 1 \\
      k & \textrm{ otherwise}
    \end{cases}$
    \item \label{thm:dtmc_bound_imprecise} if $\Pi$ is stochastic and $\pi_0$ is a probability distribution, then
    $\displaystyle \norm{e_k}_1 \leq \norm{\pi_0^{\transp} A - p_0^{\transp}}_1 + k \cdot \norm{\Pi A - A P}_\infty$
  \end{enumerate}
\end{theorem}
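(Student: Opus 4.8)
The plan is to set up a one-step recursion for the error vector and then unroll it. Writing $\pi_k^{\transp} = \pi_0^{\transp}\Pi^k$ for the aggregated distribution, so that $\widetilde{p}_k^{\transp} = \pi_k^{\transp}A$, I would first note that $\widetilde{p}_{k+1}^{\transp} = \pi_k^{\transp}\Pi A$ while $p_{k+1}^{\transp} = p_k^{\transp}P$. The central algebraic step is to insert the hybrid term $\pi_k^{\transp}AP$ into the difference and regroup:
\begin{align*}
  e_{k+1}^{\transp}
  &= \pi_k^{\transp}\Pi A - p_k^{\transp}P \\
  &= \pi_k^{\transp}(\Pi A - AP) + (\pi_k^{\transp}A - p_k^{\transp})P
   = \pi_k^{\transp}(\Pi A - AP) + e_k^{\transp}P .
\end{align*}
This recursion neatly separates the freshly introduced ``local'' error $\pi_k^{\transp}(\Pi A - AP)$, which measures how badly $\Pi A$ fails to equal $AP$, from the propagated error $e_k^{\transp}P$ carried over from the previous step.

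Next I would take the $\norm{\cdot}_1$ norm and apply the triangle inequality. Because $P$ is stochastic, its rows are non-negative and sum to one, which gives $\norm{e_k^{\transp}P}_1 \leq \norm{e_k}_1$ (equivalently $\norm{P}_\infty = 1$, using the second bound of Lemma~\ref{lem:1norm_mult}). Hence $\norm{e_{k+1}}_1 \leq \norm{e_k}_1 + \norm{\pi_k^{\transp}(\Pi A - AP)}_1$, and unrolling from $e_0^{\transp} = \pi_0^{\transp}A - p_0^{\transp}$ yields
\begin{align*}
  \norm{e_k}_1 \leq \norm{\pi_0^{\transp}A - p_0^{\transp}}_1 + \sum_{j=0}^{k-1}\norm{\pi_j^{\transp}(\Pi A - AP)}_1 .
\end{align*}
Applying the first bound of Lemma~\ref{lem:1norm_mult} to each summand gives part (i) at once.

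For part (ii) I would instead use the second bound of Lemma~\ref{lem:1norm_mult} on each summand, obtaining $\norm{\pi_j^{\transp}(\Pi A - AP)}_1 \leq \norm{\pi_j}_1\cdot\norm{\Pi A - AP}_\infty$. It then remains to control $\norm{\pi_j}_1 = \norm{\pi_0^{\transp}\Pi^j}_1$, which by the same lemma together with submultiplicativity of $\norm{\cdot}_\infty$ is at most $\norm{\pi_0}_1\cdot\norm{\Pi}_\infty^{\,j}$. Summing the geometric series $\sum_{j=0}^{k-1}\norm{\Pi}_\infty^{\,j}$ produces the two cases stated. Part (iii) is then the specialisation: if $\Pi$ is stochastic and $\pi_0$ is a probability distribution, each $\pi_j^{\transp} = \pi_0^{\transp}\Pi^j$ is again a probability distribution, so $\norm{\pi_j}_1 = 1$ (equivalently $\norm{\Pi}_\infty = 1$ and $\norm{\pi_0}_1 = 1$ in the formula from part (ii)), and the geometric sum collapses to $k$.

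The only genuinely creative step is discovering the telescoping identity $e_{k+1}^{\transp} = \pi_k^{\transp}(\Pi A - AP) + e_k^{\transp}P$; the rest is routine, and the three bounds differ solely in how the local error term $\norm{\pi_j^{\transp}(\Pi A - AP)}_1$ is estimated. I would expect no real obstacle beyond verifying that $\norm{e_k^{\transp}P}_1 \leq \norm{e_k}_1$ genuinely uses only stochasticity of $P$ (and not, say, any sign condition on the mixed-sign vector $e_k$), which is exactly what the column-wise estimate in Lemma~\ref{lem:1norm_mult} delivers.
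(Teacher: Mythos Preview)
Your proof is correct and follows essentially the same approach as the paper: both set up the one-step recursion $e_{k}^{\transp} = e_{k-1}^{\transp}P + \pi_{k-1}^{\transp}(\Pi A - AP)$, use stochasticity of $P$ to bound the propagated term, and then unroll and apply the two estimates of Lemma~\ref{lem:1norm_mult} to obtain parts (i)--(iii). The only cosmetic difference is that the paper writes the recursion from $k$ down to $k-1$ rather than from $k+1$ down to $k$.
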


\begin{proof}
  We want to give a bound on $\norm{e_k}_1$, where $e_k = \widetilde{p}_k - p_k$
  is the error vector after $k$ steps. We have
  \begin{align}
    \label{eq:dtmc_step_bnd}
    \begin{split}
      \norm{e_k}_1
      &= \norm{\pi_0^{\transp} \Pi^k A - p_0^{\transp} P^k}_1
      = \left\lVert \vphantom{\left(\Pi^{k-1} A - p_0^{\transp} P^{k-1}\right)} \right.
      \underbrace{\left(\pi_0^{\transp} \Pi^{k-1} A - p_0^{\transp} P^{k-1}\right)}_{e_{k-1}^{\transp}} \, \cdot \, P + \underbrace{\pi_0^{\transp} \Pi^{k-1}}_{\pi_{k-1}^{\transp}} \left(\Pi A - A P\right)
      \left. \vphantom{\left(\Pi^{k-1} A - p_0^{\transp} P^{k-1}\right)} \right\rVert_1 \\
      &\leq \norm{e_{k-1}^{\transp} \cdot P}_1 + \norm{\pi_{k-1}^{\transp} \cdot \left(\Pi A - A P\right)}_1
      \overset{\textrm{\autoref{lem:1norm_mult}}}{\leq} \norm{e_{k-1}}_1 \cdot \, \underbrace{\norm{P}_\infty}_{=1} + \norm{\pi_{k-1}^{\transp} \cdot \left(\Pi A - A P\right)}_1 \\
      &\overset{\textrm{\autoref{lem:1norm_mult}}}{\leq}
      \begin{cases}
        \norm{e_{k-1}}_1 + \iprod{\abs{\pi_{k-1}}}{\abs{\Pi A - A P} \cdot \mathbf{1}_n} \\
        \norm{e_{k-1}}_1 + \norm{\pi_{k-1}}_1 \cdot \norm{\Pi A - A P}_\infty
      \end{cases}
    \end{split}
  \end{align}
  We can continue with the first bound as follows:
  \begin{align*}
    \norm{e_k}_1 &\leq \norm{e_0}_1 + \sum_{j=0}^{k-1} \iprod{\abs{\pi_j}}{\abs{\Pi A - A P} \cdot \mathbf{1}_n}
  \end{align*}
  which proves \ref{thm:dtmc_bound_precise}.
  Using that
  \begin{align*}
    \norm{\pi_{k-1}}_1 = \norm{\pi_0^{\transp} \cdot \Pi^{k-1}}_1 \;\overset{\substack{\textrm{\autoref{lem:1norm_mult} \&}\\\textrm{submultiplicativity}}}{\leq}\; \norm{\pi_0}_1 \cdot \norm{\Pi}_\infty^{k-1}
  \end{align*}
  we can also continue with the second bound in \eqref{eq:dtmc_step_bnd}, obtaining
  \begin{align*}
    \norm{e_k}_1 &\leq \norm{e_0}_1 + \sum_{j=0}^{k-1} \norm{\pi_0}_1 \cdot \norm{\Pi}^j_\infty \cdot \norm{\Pi A - A P}_\infty \\
    &= \norm{\pi_0^{\transp} A - p_0^{\transp}}_1 + \norm{\pi_0}_1 \cdot \norm{\Pi A - A P}_\infty \cdot \sum_{j=0}^{k-1} \norm{\Pi}^j_\infty \\
    &= \norm{\pi_0^{\transp} A - p_0^{\transp}}_1 + \norm{\pi_0}_1 \cdot \norm{\Pi A - A P}_\infty \cdot \begin{cases}
      \frac{\norm{\Pi}^k_\infty - 1}{\norm{\Pi}_\infty - 1} & \textrm{ if } \norm{\Pi}_\infty \neq 1 \\
      k & \textrm{ otherwise}
    \end{cases}
  \end{align*}
  which proves \ref{thm:dtmc_bound_general} and \ref{thm:dtmc_bound_imprecise}.
\end{proof}

Some remarks are in order: in \cite{adaptformalagg}, the bounds look quite
different on first sight (compare with \cite[equations (17) and (18)]{adaptformalagg}).
However, they are actually a special case of the above bounds. In fact, we can define $\tau(\chi)$ for
$\chi \in \{1, \ldots, m\}$ in analogy to \cite{adaptformalagg} as follows:
$\tau(\chi) := \left(\abs{\Pi A - A P} \cdot \mathbf{1}_n\right)(\chi)$
(note that $\abs{\Pi A - A P} \cdot \mathbf{1}_n$ is a column vector with $m$ entries).
We can interpret $\tau(\chi)$ as the error caused by ``aggregate'' $\chi$,
even though the word ``aggregate'' does not have an intuitive meaning
in our very general setting for aggregations (there are no such things
as aggregates consisting of a certain group of states in our case).
In \cite{adaptformalagg}, the considered aggregations correspond almost exactly
to what was defined as weighted state space partitioning in \autoref{ssec:prelim_mc},
with the following exceptions: the $\alpha$ distributions were assumed to
be uniform distributions over the respective aggregates, and it was \emph{not}
assumed that $\Pi = A P \Lambda$. In this context, $\tau(\chi)$, written
as $\tau(\rho)$ for $\rho \in \Omega$ does indeed have the intuitive meaning
of bounding the error caused by the aggregate $\rho$ which corresponds
to a group of states in the original chain.

The advantage of the more general bounds in \autoref{thm:dtmc_bound} is that
we can calculate error bounds for a wider set of possible aggregation schemes
when compared to \cite{adaptformalagg}, as we are able to choose arbitrary values
for $\pi_0$, $\Pi$ and $A$. In the following, we often distinguish two types of error:
\begin{align*}
  \norm{e_k}_1 \leq \underbrace{\norm{\pi_0^{\transp} A - p_0^{\transp}}_1}_{\textrm{initial error}} + \underbrace{\sum_{j=0}^{k-1} \iprod{\abs{\pi_j}}{\abs{\Pi A - A P} \cdot \mathbf{1}_n}}_{\textrm{dynamic error}}
\end{align*}
The initial error is independent of the transition matrix $P$ and describes
the error made by approximating $p_0$ with an aggregated initial distribution $\pi_0$,
while the dynamic error describes the error made by approximating the dynamic
evolution of the Markov chain, given by the transition matrix $P$, with
an aggregated dynamics, given by the transition matrix $\Pi$.

\subsection{Error bounds for CTMCs}
\label{sec:ctmcerror}

We again want to give a bound on $\norm{e_t}_1$, where we set $e_t = \widetilde{p}_t - p_t$,
as before. We have
\begin{align*}
  \norm{e_t}_1
  &= \norm{\pi_0^{\transp} e^{\Theta t} A - p_0^{\transp} e^{Qt}}_1
\end{align*}
In \cite{adaptformalagg}, error bounds for aggregated CTMCs were obtained
by uniformising the Markov chain and then applying the error bounds for DTMCs.
We are now going to drop the detour via uniformisation and show how to directly bound the approximation error of the transient
distribution in continuous time with a very similar expression as in
the discrete-time case via the error matrix $\Theta A - A Q$ (instead of $\Pi A - A P$ for
DTMCs). The norm $\norm{\Theta A - A Q}_\infty$
can now be interpreted as a rate of maximal error growth (instead of error growth
per step). We again consider $A$, $\Theta$ and $\pi_0$ as arbitrary but fixed.

\begin{theorem}
  \label{thm:ctmc_bound}
  Let $e_t$ be the error vector at time $t$, i.e.
  \begin{align*}
    e_t^{\transp} = \widetilde{p}_t^{\transp} - p_t^{\transp} = \pi_0^{\transp} e^{\Theta t} A - p_0^{\transp} e^{Qt}
  \end{align*}
  Then, we have that
  \begin{align*}
    &\norm{e_t}_1 \textrm{ is absolutely continuous and almost everywhere (a.e.) differentiable, with} \\
    \frac{\mathrm{d}}{\mathrm{d}t}& \norm{e_t}_1 \leq \left< \abs{\pi_t}, \;\; \abs{\Theta A - A Q} \cdot \mathbf{1}_n \right> \textrm{ almost everywhere, and} \\
    \limsup_{u \to 0} \; &\frac{\norm{e_{t+u}}_1 - \norm{e_t}_1}{u} \leq \left< \abs{\pi_t}, \;\; \abs{\Theta A - A Q} \cdot \mathbf{1}_n \right> \textrm{ everywhere}
  \end{align*}
  and the following hold:
  \begin{enumerate}[(i)]
    \item \label{thm:ctmc_bound_precise} $\displaystyle \norm{e_t}_1 \leq \norm{\pi_0^{\transp} A - p_0^{\transp}}_1 + \int_0^t \left< \abs{\pi_u}, \;\; \abs{\Theta A - A Q} \cdot \mathbf{1}_n \right> \dx{u}$
    \item \label{thm:ctmc_bound_general}$\displaystyle \norm{e_t}_1 \leq \norm{\pi_0^{\transp} A - p_0^{\transp}}_1 + \norm{\pi_0}_1 \cdot \norm{\Theta A - A Q}_\infty \cdot \frac{e^{t\norm{\Theta}_\infty} - 1}{\norm{\Theta}_\infty}$
    whenever $\norm{\Theta}_\infty \neq 0$
    \item \label{thm:ctmc_bound_imprecise} if $\Theta$ is a generator and $\pi_0$ is a probability distribution, then
    $\displaystyle \norm{e_t}_1 \leq \norm{\pi_0^{\transp} A - p_0^{\transp}}_1 + t \cdot \norm{\Theta A - A Q}_\infty$
  \end{enumerate}
\end{theorem}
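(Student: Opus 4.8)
The plan is to mirror the discrete-time argument of \autoref{thm:dtmc_bound}, replacing the telescoping sum over steps with a differential inequality and an integration. The central object is $e_t^{\transp} = \pi_0^{\transp} e^{\Theta t} A - p_0^{\transp} e^{Qt}$, and the natural first step is to differentiate $e_t$ itself. Using $\tfrac{\mathrm{d}}{\mathrm{d}t} e^{\Theta t} = \Theta e^{\Theta t} = e^{\Theta t}\Theta$ and likewise for $e^{Qt}$, I would compute
\begin{align*}
  \frac{\mathrm{d}}{\mathrm{d}t} e_t^{\transp}
  = \pi_0^{\transp} e^{\Theta t}\Theta A - p_0^{\transp} e^{Qt} Q
  = \pi_t^{\transp}(\Theta A - A Q) + e_t^{\transp} Q,
\end{align*}
where I add and subtract $\pi_t^{\transp} A Q = \pi_0^{\transp} e^{\Theta t} A Q$ to produce the error matrix $\Theta A - A Q$ exactly as $\Pi A - A P$ arose in the DTMC case. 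This identity is the continuous-time analogue of the splitting inside \eqref{eq:dtmc_step_bnd}, with the role of the ``carried-over'' error played by $e_t^{\transp} Q$ rather than $e_{k-1}^{\transp} P$.

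The main obstacle is that $\norm{e_t}_1 = \norm{e_t^{\transp}}_1$ is not everywhere differentiable (the $\ell_1$-norm has corners wherever a component of $e_t$ vanishes), so I cannot simply differentiate the norm. The plan is to handle this carefully. Since each component of $e_t$ is a finite combination of matrix-exponential entries, $e_t$ is real-analytic in $t$; hence each component either is identically zero or has isolated zeros, so $\norm{e_t}_1$ is absolutely continuous (being Lipschitz on compact intervals, as $\tfrac{\mathrm{d}}{\mathrm{d}t} e_t$ is bounded) and differentiable at all but finitely many points of any bounded interval. At a point $t$ where every nonzero component stays nonzero, I would write $\norm{e_t}_1 = \sum_i \mathrm{sgn}(e_t(i))\, e_t(i)$ and differentiate termwise. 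To control the $\limsup$ bound that is claimed to hold \emph{everywhere}, I would instead estimate directly:
\begin{align*}
  \norm{e_{t+u}}_1 - \norm{e_t}_1 \leq \norm{e_{t+u} - e_t}_1
  = \norm{\int_t^{t+u}\tfrac{\mathrm{d}}{\mathrm{d}s} e_s \dx{s}}_1,
\end{align*}
so that dividing by $u>0$ and letting $u\to 0$ leaves the norm of the derivative at $t$, and the triangle inequality for the reverse increment handles $u<0$.

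Next I would bound $\norm{\tfrac{\mathrm{d}}{\mathrm{d}t} e_t}_1$ using the identity above. The term $\norm{e_t^{\transp} Q}_1$ does \emph{not} contribute to the stated rate bound, which suggests the key step is to show it drops out when measuring the growth of $\norm{e_t}_1$ rather than of $e_t$. Concretely, at a differentiability point I would use that $\tfrac{\mathrm{d}}{\mathrm{d}t}\norm{e_t}_1 = \iprod{s_t}{\tfrac{\mathrm{d}}{\mathrm{d}t} e_t}$ with $s_t(i) = \mathrm{sgn}(e_t(i))$, and then observe that $\iprod{s_t}{Q^{\transp} e_t} = \sum_i s_t(i)(e_t^{\transp} Q)(i)$; because $Q$ is a generator (rows summing to zero, nonnegative off-diagonal), this quantity is $\leq 0$, exactly as $\norm{e_{k-1}^{\transp} P}_1 \leq \norm{e_{k-1}}_1$ used $\norm{P}_\infty = 1$ in the discrete case. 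Thus the surviving bound is $\tfrac{\mathrm{d}}{\mathrm{d}t}\norm{e_t}_1 \leq \norm{\pi_t^{\transp}(\Theta A - A Q)}_1$, to which \autoref{lem:1norm_mult} applies, yielding $\iprod{\abs{\pi_t}}{\abs{\Theta A - A Q}\cdot\mathbf{1}_n}$ and establishing the a.e.\ differential inequality as well as the everywhere $\limsup$ version.

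Finally, the three enumerated bounds follow by integration, paralleling the DTMC proof. For \ref{thm:ctmc_bound_precise} I would integrate the differential inequality from $0$ to $t$, using absolute continuity and $\norm{e_0}_1 = \norm{\pi_0^{\transp} A - p_0^{\transp}}_1$. For \ref{thm:ctmc_bound_general} I would bound $\norm{\pi_u}_1 = \norm{\pi_0^{\transp} e^{\Theta u}}_1 \leq \norm{\pi_0}_1 \cdot e^{u\norm{\Theta}_\infty}$ via \autoref{lem:matexp_norm_bound}, apply the second case of \autoref{lem:1norm_mult}, and integrate $\int_0^t e^{u\norm{\Theta}_\infty}\dx{u} = \tfrac{e^{t\norm{\Theta}_\infty}-1}{\norm{\Theta}_\infty}$. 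For \ref{thm:ctmc_bound_imprecise}, when $\Theta$ is a generator we have $\norm{e^{\Theta u}}_\infty = 1$ and $\norm{\pi_u}_1 \leq \norm{\pi_0}_1 = 1$, so the integrand is bounded by $\norm{\Theta A - A Q}_\infty$ and integration over $[0,t]$ gives the factor $t$. The one delicate point worth flagging is justifying that the generator sign structure forces $\iprod{s_t}{Q^{\transp} e_t}\leq 0$; I expect this, rather than any of the integrations, to be where the real content lies.
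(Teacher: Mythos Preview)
Your differentiation-then-sign-vector route is a genuinely different approach from the paper's, and for the a.e.\ differential inequality it is sound: the identity $\tfrac{\mathrm{d}}{\mathrm{d}t}e_t^{\transp} = \pi_t^{\transp}(\Theta A - AQ) + e_t^{\transp} Q$ is correct, and your claim $\iprod{s_t}{Q^{\transp} e_t} \leq 0$ does follow from the generator sign structure (it is the infinitesimal version of $\norm{v^{\transp} e^{Qu}}_1 \leq \norm{v}_1$). Since absolute continuity plus the a.e.\ differential inequality already yields \ref{thm:ctmc_bound_precise}--\ref{thm:ctmc_bound_imprecise} by integration, most of the theorem is covered by your plan.

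The gap is in your argument for the \emph{everywhere} $\limsup$ bound. The reverse triangle inequality $\norm{e_{t+u}}_1 - \norm{e_t}_1 \leq \norm{e_{t+u}-e_t}_1$ followed by $u\to 0$ only yields $\norm{\tfrac{\mathrm{d}}{\mathrm{d}t}e_t}_1 = \norm{\pi_t^{\transp}(\Theta A - AQ) + e_t^{\transp} Q}_1$, which still contains the $e_t^{\transp} Q$ contribution; the sign-vector cancellation you invoke afterwards applies to $\iprod{s_t}{\tfrac{\mathrm{d}}{\mathrm{d}t}e_t}$, not to $\norm{\tfrac{\mathrm{d}}{\mathrm{d}t}e_t}_1$. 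Extending the sign argument to the non-differentiability points (where some $e_t(i)=0$ and the right one-sided derivative of $\abs{e_t(i)}$ is $\abs{e_t'(i)}$ rather than $s_t(i)e_t'(i)$) is possible but requires a separate estimate you have not supplied. The paper sidesteps this entirely by working multiplicatively rather than differentially: it writes
\[
  e_{t+u}^{\transp} = e_t^{\transp} e^{Qu} + \pi_t^{\transp}\bigl(e^{\Theta u}A - Ae^{Qu}\bigr),
\]
uses that $e^{Qu}$ is stochastic to get $\norm{e_t^{\transp} e^{Qu}}_1 \leq \norm{e_t}_1$ at the level of norms (so the carried-over error term disappears from the difference quotient before any limit is taken), and then applies \autoref{lem:absfctlim} entrywise to $e^{\Theta u}A - Ae^{Qu}$ to identify the $\limsup$ of the remaining term as $\iprod{\abs{\pi_t}}{\abs{\Theta A - AQ}\cdot\mathbf{1}_n}$. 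This gives the everywhere bound directly, with no case analysis at corners of the $\ell_1$-norm.
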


Before being able to prove the above theorem, we need another lemma.
\begin{lemma}
  \label{lem:absfctlim}
  Assume that $f: \bbR \to \bbR$ is differentiable in $0$, and that $f(0) = 0$.
  Then
  \begin{align*}
    \limsup_{u \to 0} \frac{\abs{f(u)}}{u} = \lim_{\substack{u \to 0\\u > 0}} \frac{\abs{f(u)}}{u} &= \abs{f'(0)} \qedhere
  \end{align*}
\end{lemma}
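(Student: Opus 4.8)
The plan is to reduce the whole statement to the single observation that differentiability of $f$ at $0$ combined with $f(0)=0$ forces the difference quotient $f(u)/u$ to converge to $f'(0)$, and then to handle the absolute value by treating the two sides of $0$ separately. Concretely, since $f(0)=0$ we have
\begin{align*}
  \frac{f(u)-f(0)}{u-0} = \frac{f(u)}{u},
\end{align*}
so differentiability at $0$ is exactly the statement that $\lim_{u\to 0} \frac{f(u)}{u} = f'(0)$. This is the only analytic input; everything else is a manipulation of absolute values.

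For the one-sided equality I would argue as follows. For $u>0$ we have $u = \abs{u}$, hence $\frac{\abs{f(u)}}{u} = \abs{\frac{f(u)}{u}}$. Because the absolute value map is continuous and $\frac{f(u)}{u}\to f'(0)$, composing limits gives $\frac{\abs{f(u)}}{u}\to \abs{f'(0)}$ as $u\to 0^+$. This establishes the second equality in the lemma.

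For the $\limsup$ I would split the approach to $0$ into its two sides. The right side contributes $\abs{f'(0)}$ by the computation just made. For $u<0$ we instead have $u = -\abs{u}$, so $\frac{\abs{f(u)}}{u} = -\abs{\frac{f(u)}{u}} \to -\abs{f'(0)} \leq 0 \leq \abs{f'(0)}$ as $u\to 0^-$. Since the $\limsup$ is the largest accumulation value of the quotient and the two one-sided limits are $\abs{f'(0)}$ and $-\abs{f'(0)}$, it equals $\max\{\abs{f'(0)},\,-\abs{f'(0)}\} = \abs{f'(0)}$, matching the right-sided limit.

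There is no genuine obstacle here; the lemma is essentially bookkeeping around the sign of $u$. The only point requiring a moment of care is remembering that the $\limsup$ is genuinely two-sided, so one must verify that the left-hand contributions (which are non-positive) do not exceed $\abs{f'(0)}$ and therefore cannot inflate the $\limsup$ above the value computed from the right. This is immediate once the two one-sided limits are in hand.
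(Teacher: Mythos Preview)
Your proof is correct and follows essentially the same approach as the paper: both reduce the statement to $\lim_{u\to 0} f(u)/u = f'(0)$ via $f(0)=0$ and then push the absolute value through by continuity. Your version is in fact slightly more explicit than the paper's one-line chain, since you spell out the left-sided contribution $-\abs{f'(0)}$ to justify the two-sided $\limsup$, which the paper leaves implicit.
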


\begin{proof}
  We have:
  \begin{align*}
    \abs{f'(0)} &= \abs{\lim_{u \to 0} \frac{f(u)}{u}} = \lim_{u \to 0} \frac{\abs{f(u)}}{\abs{u}}
    = \lim_{\substack{u \to 0\\u > 0}} \frac{\abs{f(u)}}{u}
    = \limsup_{u \to 0} \frac{\abs{f(u)}}{u}
    \qedhere
  \end{align*}
\end{proof}

\begin{proof}[Proof of \autoref{thm:ctmc_bound}]
  First, note the following: every component of $e_t$ is continuously differentiable
  in $t$, as both $p_t$ and $\widetilde{p}_t$ are continuously differentiable with respect to
  $t$. Indeed, calculating the derivative of all components of $e_t$ simultaneously, we get
  \begin{align}
    \frac{\mathrm{d}}{\mathrm{d}t} (\widetilde{p}_t^{\transp} - p_t^{\transp}) &= \frac{\mathrm{d}}{\mathrm{d}t} \left(\pi_0^{\transp} e^{\Theta t} A - p_0^{\transp} e^{Qt}\right)
    = \pi_0^{\transp} e^{\Theta t}\Theta A - p_0^{\transp} e^{Qt}Q = \pi_t^{\transp} \Theta A - p_t^{\transp} Q \label{eq:error_derivative} \\
    \norm{\frac{\mathrm{d}}{\mathrm{d}t} (\widetilde{p}_t^{\transp} - p_t^{\transp})}_1 &= \norm{\pi_t^{\transp} \Theta A - p_t^{\transp} Q}_1
    \leq \norm{\pi_t^{\transp} \Theta A}_1 + \norm{p_t^{\transp} Q}_1 
    \overset{\textrm{\autoref{lem:1norm_mult}}}{\leq} \norm{\pi_0}_1 \norm{e^{\Theta t}}_\infty \norm{\Theta A}_\infty + \norm{p_t}_1 \norm{Q}_\infty \notag \\
    &\overset{\textrm{\autoref{lem:matexp_norm_bound}}}{\leq} \norm{\pi_0}_1 e^{t \norm{\Theta}_\infty} \norm{\Theta A}_\infty + \norm{Q}_\infty \notag
  \end{align}
  As every component of $e_t$ is continuously differentiable with bounded derivative, $\norm{e_t}_1$ is absolutely continuous
  and differentiable almost everywhere (see, e.g., \cite[Section 5.4 on page 108]{realanalysis}) on any bounded time interval.

  Noting that $e^{Qt} e^{Qu} = e^{Q(t+u)}$ by commutativity of $Qt$ and $Qu$, we see that
  \begin{align}
    \begin{split}
      \norm{e_{t+u}}_1
      &= \norm{\pi_0^{\transp} e^{\Theta (t+u)} A - p_0^{\transp} e^{Q(t+u)}}_1 \\
      &= \left\lVert\vphantom{\pi_0^{\transp} e^{\Theta t}\left(e^{\Theta u}A - Ae^{Qu}\right)}\right.
      \underbrace{\left(\pi_0^{\transp} e^{\Theta t} A - p_0^{\transp} e^{Qt}\right)}_{e_t^{\transp}} \underbrace{e^{Qu}}_{\textrm{stochastic}} + \left.\pi_0^{\transp} e^{\Theta t}\left(e^{\Theta u}A - Ae^{Qu}\right)\right\rVert_1 \\
      &\overset{\substack{\triangle\textrm{-inequ.~\&}\\\textrm{\autoref{lem:1norm_mult}}}}{\leq} \norm{e_t}_1 + \norm{\pi_t^{\transp} \left(e^{\Theta u}A - Ae^{Qu}\right)}_1
      \overset{\textrm{\autoref{lem:1norm_mult}}}{\leq} \begin{cases}
        \norm{e_t}_1 + \left< \abs{\pi_t}, \;\; \abs{e^{\Theta u}A - Ae^{Qu}} \cdot \mathbf{1}_n \right> \\
        \norm{e_t}_1 + \norm{\pi_t}_1 \cdot \norm{e^{\Theta u}A - Ae^{Qu}}_\infty
      \end{cases}
    \end{split}
    \label{eq:ctmc_altbound_intermed}
  \end{align}
  To obtain a more precise bound, we will continue by using the first inequality derived
  above. Note: we have that (the matrices $e^{Qu}$ and $Q$ commute)
  \begin{align*}
    \frac{\mathrm{d}}{\mathrm{d}u} \left(e^{\Theta u} A - A e^{Qu}\right) &= e^{\Theta u} \Theta A - A e^{Qu} Q \; \overset{u=0}{=} \; \Theta A - A Q 
  \end{align*}
  Hence:
  \begin{align*}
    &\norm{e_{t+u}}_1 - \norm{e_t}_1
    \leq \left< \abs{\pi_t}, \;\; \abs{e^{\Theta u}A - Ae^{Qu}} \cdot \mathbf{1}_n \right> \\[1em]
    \implies
    \limsup_{u \to 0} \; &\frac{\norm{e_{t+u}}_1 - \norm{e_t}_1}{u} \leq \\
    &\leq \limsup_{u \to 0} \frac{\left< \abs{\pi_t}, \;\; \abs{e^{\Theta u}A - Ae^{Qu}} \cdot \mathbf{1}_n \right>}{u}
    = \limsup_{u \to 0} \sum_{\chi = 1}^m \abs{\pi_t(\chi)} \cdot \sum_{i=1}^n \frac{\abs{\left(e^{\Theta u}A - Ae^{Qu}\right) (\chi, i)}}{u} \\
    &\leq \sum_{\chi = 1}^m \abs{\pi_t(\chi)} \cdot \sum_{i=1}^n \limsup_{u \to 0} \frac{\abs{\left(e^{\Theta u}A - Ae^{Qu}\right) (\chi, i)}}{u}
    \overset{\textrm{\autoref{lem:absfctlim}}}{=} \sum_{\chi = 1}^m \abs{\pi_t(\chi)} \cdot \sum_{i=1}^n \abs{\left(\Theta A - A Q\right)(\chi, i)} \\
    &= \left< \abs{\pi_t}, \;\; \abs{\Theta A - A Q} \cdot \mathbf{1}_n \right>
  \end{align*}
  This proves the bounds on the derivative given in \autoref{thm:ctmc_bound}
  and \ref{thm:ctmc_bound_precise} follows immediately.
  Going back to \eqref{eq:ctmc_altbound_intermed}, we can also derive a
  less precise bound using the lower inequality in \eqref{eq:ctmc_altbound_intermed}:
  we have that
  \begin{align*}
    \limsup_{u \to 0} \; &\frac{\norm{e_{t+u}}_1 - \norm{e_t}_1}{u} \leq \\
    &\leq \limsup_{u \to 0} \norm{\pi_t}_1 \cdot \frac{\norm{e^{\Theta u}A - Ae^{Qu}}_\infty}{u}
    = \limsup_{u \to 0} \norm{\pi_t}_1 \cdot \max_{\chi = 1, \ldots, m} \sum_{i = 1}^{n} \frac{\abs{\left(e^{\Theta u}A - Ae^{Qu}\right)(\chi, i)}}{u} \\
    &\leq \norm{\pi_t}_1 \cdot \max_{\chi = 1, \ldots, m} \sum_{i = 1}^{n} \limsup_{u \to 0} \frac{\abs{\left(e^{\Theta u}A - Ae^{Qu}\right)(\chi, i)}}{u} \\
    &\overset{\textrm{\autoref{lem:absfctlim}}}{=} \norm{\pi_t}_1 \cdot \max_{\chi = 1, \ldots, m} \sum_{i = 1}^{n} \abs{\left(\Theta A - A Q\right)(\chi, i)}
    = \norm{\pi_t}_1 \cdot \norm{\Theta A - A Q}_\infty
  \end{align*}
  Therefore,
  \begin{align*}
    \norm{e_t}_1 &\leq \norm{\pi_0^{\transp} A - p_0^{\transp}}_1 + \int_0^t \norm{\pi_u}_1 \cdot \norm{\Theta A - A Q}_\infty \dx{u} \\
    &\leq \norm{\pi_0^{\transp} A - p_0^{\transp}}_1 + \norm{\pi_0}_1 \cdot \left(\int_0^t \norm{e^{\Theta u}}_\infty \dx{u}\right) \cdot \norm{\Theta A - A Q}_\infty
  \end{align*}
  where we used that $\norm{\pi_t}_1 \overset{\textrm{\autoref{lem:1norm_mult}}}{\leq} \norm{\pi_0}_1 \cdot \norm{e^{\Theta t}}_\infty$.
  If $\Theta$ is a generator and if $\pi_0$ is a probability distribution, this
  inequality collapses to
  \begin{align*}
    \norm{e_t}_1 &\leq \norm{\pi_0^{\transp} A - p_0^{\transp}}_1 + t \cdot \norm{\Theta A - A Q}_\infty
  \end{align*}
  which proves \ref{thm:ctmc_bound_imprecise}. To prove \ref{thm:ctmc_bound_general}, we note that
  \begin{align*}
    \int_0^t \norm{e^{\Theta u}}_\infty \dx{u}
    &\overset{\textrm{\autoref{lem:matexp_norm_bound}}}{\leq} \int_0^t e^{u\norm{\Theta}_\infty} \dx{u}
    = \left[\frac{e^{u\norm{\Theta}_\infty}}{\norm{\Theta}_\infty}\right]_{u=0}^t
    = \frac{e^{t\norm{\Theta}_\infty} - 1}{\norm{\Theta}_\infty} \qedhere
  \end{align*}
\end{proof}

\subsection{When is the error bound zero?}
\label{sec:errzero}

\begin{corollary}
  \label{cor:exact_errbnd}
  Given a DTMC or CTMC, an arbitrary disaggregation matrix $A$, an arbitrary aggregated step matrix $\Pi$
  (respectively evolution matrix $\Theta$), and an arbitrary initial vector $\pi_0$, we have the following.
  If $\Pi A = A P$ (respectively $\Theta A = A Q$), then
  \begin{align*}
    \norm{\widetilde{p}_k - p_k}_1 \leq \norm{\pi_0^{\transp} A - p_0^{\transp}}_1
    \textrm{ or, for continuous time, }
    \norm{\widetilde{p}_t - p_t}_1 \leq \norm{\pi_0^{\transp} A - p_0^{\transp}}_1
  \end{align*}
\end{corollary}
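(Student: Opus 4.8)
The plan is to read the result directly off the \textbf{precise} bounds established in part~\ref{thm:dtmc_bound_precise} of \autoref{thm:dtmc_bound} and part~\ref{thm:ctmc_bound_precise} of \autoref{thm:ctmc_bound}, since those bounds already split the right-hand side into an initial error and a dynamic error. The only thing to observe is that the stated hypothesis forces the dynamic error to vanish identically, leaving exactly the initial error.

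For the discrete-time case I would start from \autoref{thm:dtmc_bound}\,\ref{thm:dtmc_bound_precise}, namely
\[
  \norm{e_k}_1 \leq \norm{\pi_0^{\transp} A - p_0^{\transp}}_1 + \sum_{j=0}^{k-1} \iprod{\abs{\pi_j}}{\abs{\Pi A - A P} \cdot \mathbf{1}_n}.
\]
Under the assumption $\Pi A = A P$, the matrix $\Pi A - A P$ is the zero matrix, hence so is $\abs{\Pi A - A P}$, and therefore $\abs{\Pi A - A P} \cdot \mathbf{1}_n$ is the zero vector. Each summand is then the inner product of $\abs{\pi_j}$ with the zero vector and thus equals $0$, regardless of the values of the $\pi_j$ (which need not be probability vectors in this general setting). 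The dynamic error collapses to $0$, which yields the claimed bound.

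The continuous-time case is completely analogous, starting from \autoref{thm:ctmc_bound}\,\ref{thm:ctmc_bound_precise}:
\[
  \norm{e_t}_1 \leq \norm{\pi_0^{\transp} A - p_0^{\transp}}_1 + \int_0^t \iprod{\abs{\pi_u}}{\abs{\Theta A - A Q} \cdot \mathbf{1}_n} \dx{u}.
\]
Here $\Theta A = A Q$ makes $\abs{\Theta A - A Q} \cdot \mathbf{1}_n$ the zero vector, so the integrand is identically $0$ and the integral vanishes, again leaving only the initial error term.

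There is no genuine obstacle here: the entire content is that the ``error matrix'' $\Pi A - A P$ (respectively $\Theta A - A Q$) that drives the dynamic error is precisely what the hypothesis sets to zero, and that the corresponding inner product (respectively integrand) then annihilates pointwise, independently of the arbitrary vectors $\pi_j$ or $\pi_u$. The only point worth flagging is that this argument uses the sharpest of the three bounds in each theorem; one could alternatively invoke \ref{thm:dtmc_bound_general}/\ref{thm:dtmc_bound_imprecise} (respectively \ref{thm:ctmc_bound_general}/\ref{thm:ctmc_bound_imprecise}) via $\norm{\Pi A - A P}_\infty = 0$ (respectively $\norm{\Theta A - A Q}_\infty = 0$), but starting from part~(i) makes the cancellation most transparent.
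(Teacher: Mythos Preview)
Your proposal is correct and is precisely the argument the paper has in mind: the paper's proof simply states that the corollary follows directly from \autoref{thm:dtmc_bound} and \autoref{thm:ctmc_bound}, and your write-up spells out explicitly why (the hypothesis kills the dynamic-error term in the precise bounds~\ref{thm:dtmc_bound_precise} and~\ref{thm:ctmc_bound_precise}).
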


\begin{proof}
  This follows directly from \autoref{thm:dtmc_bound} and \autoref{thm:ctmc_bound}.
\end{proof}

\autoref{cor:exact_errbnd} motivates the following definition:

\begin{definition}
  \label{def:exactagg}
  We call an aggregation of a DTMC (respectively CTMC), given by
  a disaggregation matrix $A$, an aggregated step matrix $\Pi$ (respectively
  an aggregated evolution matrix $\Theta$) and an aggregated initial vector
  $\pi_0$, \textbf{dynamic-exact}
  if $\Pi A = A P$ (respectively $\Theta A = A Q$).
  
  If we further have that $\pi_0^{\transp} A = p_0^{\transp}$,
  then we call the aggregation \textbf{exact}.
\end{definition}

If $A$, $\Pi$ and $\pi_0$ are an exact aggregation, then
$\widetilde{p}_k = p_k$ for all $k$. This was already shown in \autoref{cor:exact_errbnd},
but can also be proven easily via induction. Indeed,
$\Pi A = A P$ implies $A P^k = APP^{k-1} = \Pi A P P^{k-2} = \ldots = \Pi^k A$.
Hence, if $\pi_0^{\transp} A = p_0^{\transp}$, then we have
\begin{align}
  \label{eq:induc_exact}
  p_k^{\transp} &= p_0^{\transp}P^k = \pi_0^{\transp} A P^k = \pi_0^{\transp} \Pi^k A = \pi_k^{\transp} A = \widetilde{p}_k^{\transp}
\end{align}
If the aggregation is only dynamic-exact, then it only holds that
$\widetilde{p}_k^{\transp} = \widetilde{p}_0^{\transp} P^k$ as
\begin{align*}
  \widetilde{p}_0^{\transp} P^k &= \pi_0^{\transp} A P^k = \pi_0^{\transp} \Pi^k A = \pi_k^{\transp} A = \widetilde{p}_k^{\transp}
\end{align*}
Note that in this case, we might have $\widetilde{p}_0 \neq p_0$. This is the reason for the term ``dynamic-exact'': the
step dynamics are correctly represented by the aggregation (for certain initial vectors), but the initial distribution might
be wrongly approximated by such an initial vector. The corresponding statements also hold for CTMCs.

\begin{proposition}
  \label{prop:dynex_alwaysiniterr}
  Consider an aggregation of a DTMC (respectively CTMC) which is dynamic-exact,
  i.e. $\Pi A = A P$ (respectively $\Theta A = A Q$). Further assume that
  either $\pi_0^{\transp} A \geq p_0^{\transp}$ or $\pi_0^{\transp} A \leq p_0^{\transp}$,
  where the inequalities should hold entry-wise. Then:
  \begin{align*}
    \norm{\pi_0^{\transp} \Pi^k A - p_0^{\transp} P^k}_1 \overset{\textrm{def.}}{=} \norm{e_k}_1 = \norm{\pi_0^{\transp} A - p_0^{\transp}}_1
    \quad \textrm{ respectively } \quad
    \norm{\pi_0^{\transp} e^{\Theta t} A - p_0^{\transp} e^{Qt}}_1 \overset{\textrm{def.}}{=} \norm{e_t}_1 = \norm{\pi_0^{\transp} A - p_0^{\transp}}_1
  \end{align*}
  Furthermore, if $\pi_0^{\transp} A \geq p_0^{\transp}$, then $\widetilde{p}_k^{\transp} = \pi_0^{\transp} \Pi^k A \geq p_k^{\transp}$ for all $k$
  (and the same holds for $\leq$ and for CTMCs).
\end{proposition}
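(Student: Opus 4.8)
The plan is to exploit dynamic-exactness to collapse the error vector into a single stochastic-matrix image of the initial error, and then to use sign-definiteness to turn the $1$-norm into a total mass that stochastic matrices preserve. As already observed in the text preceding \autoref{def:exactagg}, $\Pi A = A P$ implies $\Pi^k A = A P^k$, so that
\begin{align*}
  e_k^{\transp} = \pi_0^{\transp} \Pi^k A - p_0^{\transp} P^k = \pi_0^{\transp} A P^k - p_0^{\transp} P^k = \left(\pi_0^{\transp} A - p_0^{\transp}\right) P^k .
\end{align*}
Writing $d^{\transp} := \pi_0^{\transp} A - p_0^{\transp}$ for the initial error vector, we thus have $e_k^{\transp} = d^{\transp} P^k$, which reduces everything to understanding how a stochastic matrix acts on a sign-definite vector.

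First I would treat the case $d \geq 0$; the case $d \leq 0$ is identical after replacing $d$ by $-d$, which changes neither $\norm{d}_1$ nor the argument. Since $P$ is stochastic, every power $P^k$ is stochastic as well (non-negativity and the row-sum property $P^k \mathbf{1}_n = \mathbf{1}_n$ are both preserved under multiplication), and right-multiplication of the non-negative row vector $d^{\transp}$ by the non-negative matrix $P^k$ again yields a non-negative vector. Hence $e_k^{\transp} = d^{\transp} P^k \geq 0$, and its $1$-norm equals its plain row sum:
\begin{align*}
  \norm{e_k}_1 = e_k^{\transp} \mathbf{1}_n = d^{\transp} P^k \mathbf{1}_n = d^{\transp} \mathbf{1}_n = \norm{d}_1 = \norm{\pi_0^{\transp} A - p_0^{\transp}}_1 ,
\end{align*}
which is the claimed identity. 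The non-negativity $e_k^{\transp} \geq 0$ is precisely the assertion $\widetilde{p}_k^{\transp} \geq p_k^{\transp}$, so the ``furthermore'' part falls out of the same computation.

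For the continuous-time statement I would run the same argument with $e^{Qt}$ in place of $P^k$. The identity $\Theta A = A Q$ gives $\Theta^k A = A Q^k$ by the same induction, whence $e^{\Theta t} A = \sum_{k \geq 0} \frac{t^k}{k!} \Theta^k A = A \sum_{k \geq 0} \frac{t^k}{k!} Q^k = A e^{Qt}$, and therefore $e_t^{\transp} = \left(\pi_0^{\transp} A - p_0^{\transp}\right) e^{Qt} = d^{\transp} e^{Qt}$. Since $Q$ is a generator, $e^{Qt}$ is stochastic (as already used in the proof of \autoref{thm:ctmc_bound}), so $e^{Qt} \mathbf{1}_n = \mathbf{1}_n$ and both the non-negativity and the norm-preservation go through verbatim.

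I do not expect a genuine obstacle here: once the two commutation identities and the stochasticity of $P^k$ and $e^{Qt}$ are in place, the proof is essentially bookkeeping. The only step that really requires care — and the only place where the hypothesis is used — is the passage $\norm{d^{\transp} P^k}_1 = d^{\transp} P^k \mathbf{1}_n$, which is valid solely because sign-definiteness of $d$ propagates to $d^{\transp} P^k$ and lets us drop the absolute values. Without a sign assumption the $1$-norm could strictly decrease under the stochastic map, and the equality would degrade back to the mere inequality of \autoref{cor:exact_errbnd}.
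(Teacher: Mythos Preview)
Your proposal is correct and follows essentially the same approach as the paper: collapse the error to $(\pi_0^{\transp} A - p_0^{\transp})P^k$ (respectively $(\pi_0^{\transp} A - p_0^{\transp})e^{Qt}$) via the commutation identity, then use that a stochastic matrix preserves both sign-definiteness and the $1$-norm of a sign-definite row vector. Your write-up is in fact slightly more explicit than the paper's in spelling out why the norm is preserved (via $d^{\transp}P^k\mathbf{1}_n = d^{\transp}\mathbf{1}_n$), but the underlying argument is identical.
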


\begin{proof}
  Note that
  \begin{align*}
    \pi_0^{\transp} \Pi^k A - p_0^{\transp} P^k
    = \underbrace{\pi_0^{\transp} \Pi^k A - \pi_0^{\transp} A P^k}_{=0 \textrm{ as } \Pi A = A P} \, + \, \pi_0^{\transp} A P^k - p_0^{\transp} P^k
    = \left(\pi_0^{\transp} A - p_0^{\transp}\right) P^k
  \end{align*}
  which proves the claim for DTMCs as $P^k$ is stochastic and therefore preserves
  the $\norm{\cdot}_1$-norms as well as the non-negativity or non-positivity of vectors applied from the left if they have all non-negative or all non-positive
  entries. For CTMCs, we have
  \begin{align*}
    \pi_0^{\transp} e^{\Theta t} A - p_0^{\transp} e^{Qt}
    &= \pi_0^{\transp} e^{\Theta t} A - \pi_0^{\transp} A e^{Qt} + \pi_0^{\transp} A e^{Qt} - p_0^{\transp} e^{Qt} \\
    &= \underbrace{\pi_0^{\transp} \sum_{k=0}^\infty \frac{t^k\left(\Theta^k A - A Q^k\right)}{k!}}_{=0 \textrm{ as } \Theta A = A Q} \, + \,
    \left(\pi_0^{\transp} A - p_0^{\transp}\right) e^{Qt}
  \end{align*}
  which proves the claim for CTMCs as $e^{Qt}$ is stochastic as well.
\end{proof}

The above \autoref{prop:dynex_alwaysiniterr} allows us to find lower and upper bounds for the transient
distributions by choosing appropriate values for $\pi_0$ which
result in lower or upper bounds of $p_0$ (in the sense that $\pi_0^{\transp} A \geq p_0^{\transp}$
or $\pi_0^{\transp} A \leq p_0^{\transp}$) if we have a dynamic-exact aggregation.

The condition $\Pi A = A P$ has appeared in the literature before in connection
with Markov chains, but always in more restricted contexts, in particular in the context
of weighted state space partitioning as defined in \autoref{ssec:prelim_mc}
(where the state space $S$ of the original chain is partitioned into aggregates
by $\Omega$ and each state $s$ of the original chain is assigned weight $\alpha(s)$
within its aggregate). Indeed, equation (4) on page 135 of \cite{finmc} states that, if $\Pi = A P \Lambda$,
then $\Pi A = A P$ implies weak lumpability
of the DTMC. A DTMC is called weakly lumpable for a given partition $\Omega$
if there exists an initial distribution $p_0$ such that the process
$Y_k$, defined by $Y_k = \sigma \in \Omega \iff X_k \in \sigma$, is
a Markov chain. For such an initial distribution, the probabilities
$\pi_k(\sigma)$ are then exactly equal to $\Prb{Y_k = \sigma} = \Prb{X_k \in \sigma}$.
However, the concept of weak lumpability makes no statement about whether
the probability $\Prb{X_k = s}$ for $s \in \sigma$ can be accurately
derived from the knowledge of $\Prb{X_k \in \sigma}$.

Again under the condition that $\Pi = A P \Lambda$,
\cite[Definition 2.2]{markovbounds} defined the matrix $P$ to be $A$-lumpable
if $\Pi A = A P$. In the subsequent remarks, \cite{markovbounds} then
noted that, given $\pi_k$, an exact recovery of the probabilities $p_k(s)$
is possible provided that the initial distribution $p_0$ is compatible
with the $\alpha$ distributions. Actually, \cite[equation (2.4)]{markovbounds}
corresponds almost exactly to \eqref{eq:induc_exact}.

Furthermore, an exact aggregation is called backward bisimulation of type 2
in \cite[Definition 4.3]{bisimwgtautom}, yet again if $\Pi = A P \Lambda$.
\cite{bisimwgtautom} already remarks that the coarsest exact aggregation
cannot be found by the usual partition refinement approach, which we will
see later in \autoref{sec:almost_exlump}. Therefore, efficiently finding partitions $\Omega$
(and distributions $\alpha$) which are an exact aggregation for a given
DTMC or CTMC is currently still an open research problem, to
the best of our knowledge.

\subsection{Error bound for the approximated stationary distribution}
\label{ssec:stat_dist}

This paper focuses on error bounds for the approximated transient distributions
of a Markov chain. However, we will briefly cover a similar error bound
for how well we can approximate the stationary distribution with an aggregated
chain. For simplicity, we will assume in this section that the
DTMC given by $P$ and the CTMC given by $Q$ give rise to a unique
stationary distribution $p$. We first consider the discrete-time case.

As we allow arbitrary matrices for the aggregated step matrix $\Pi$,
we define an approximation for the stationary distribution $p$ of the original
chain as follows. A stationary vector $\pi$ of $\Pi$ simply is a left eigenvector
of the matrix $\Pi$ with eigenvalue $1$, i.e.~it holds that $\pi^{\transp} \Pi = \pi^{\transp}$.
Given such a stationary vector, we can approximate $p^{\transp}$ by the vector
$\pi^{\transp} A$, which is very similar to the approximation
$\widetilde{p}_k^{\transp} = \pi_k^{\transp}A$ of $p_k$. We then get
\begin{align*}
  \norm{\pi^{\transp} A P - \pi^{\transp} A}_1
  &= \norm{\pi^{\transp} A P - \pi^{\transp} \Pi A}_1
  \overset{\textrm{\autoref{lem:1norm_mult}}}{\leq} \norm{\pi}_1 \norm{\Pi A - A P}_\infty
\end{align*}
The leftmost term gives a measure of how far the approximated stationary
distribution $\pi^{\transp} A$ is from being stationary for the
matrix $P$. Note that depending on our choice of $\Pi$, there might not
exist a left eigenvector with eigenvalue $1$.
In the special case where $\Pi$ and $A$ are stochastic matrices, the
existence of $\pi$ is guaranteed, $\pi$ is an actual stationary distribution for
$\Pi$, and we will also have that $\pi^{\transp} A$
is a probability distribution. $\norm{\Pi A - A P}_\infty$ gives
us some sort of measure of how far this probability distribution is from
being stationary. Hence, we can see $\norm{\Pi A - A P}_\infty$ as not
only capturing how well the aggregated
chain reflects the original transient behavior, but also capturing
how well we can approximate the stationary behavior.

If $\Pi$ does not have a left eigenvector with eigenvalue $1$, we can
consider the case where we only have the following approximate statement:
$\norm{\pi^{\transp} \Pi - \pi^{\transp}}_1 \approx 0$.
In this case,
\begin{align*}
  \norm{\pi^{\transp} A P - \pi^{\transp} A}_1
  &= \norm{\pi^{\transp} A P - \pi^{\transp} \Pi A + (\pi^{\transp} \Pi - \pi^{\transp}) A}_1
  \leq \norm{\pi}_1 \norm{\Pi A - A P}_\infty + \norm{\pi^{\transp} \Pi - \pi^{\transp}}_1 \norm{A}_\infty
\end{align*}
For stochastic $A$ and a probability distribution $\pi$, this inequality reduces to
\begin{align*}
  \norm{\pi^{\transp} A P - \pi^{\transp} A}_1
  \leq \norm{\Pi A - A P}_\infty + \norm{\pi^{\transp} \Pi - \pi^{\transp}}_1
\end{align*}

For CTMCs, we have the corresponding result: if $\pi$ is a left
eigenvector of $\Theta$ with eigenvalue $0$, i.e.~$\pi^{\transp} \Theta = 0$, then
\begin{align*}
  \norm{\pi^{\transp} A Q}_1
  &= \norm{\pi^{\transp} A Q - \pi^{\transp} \Theta A}_1
  \overset{\textrm{\autoref{lem:1norm_mult}}}{\leq} \norm{\pi}_1 \norm{\Theta A - A Q}_\infty
\end{align*}
and if we only have that $\norm{\pi^{\transp} \Theta}_1 \approx 0$, then
\begin{align*}
  \norm{\pi^{\transp} A Q}_1 &\leq
  \norm{\pi}_1 \norm{\Theta A - A Q}_\infty + \norm{\pi^{\transp} \Theta}_1 \norm{A}_\infty
\end{align*}

\begin{corollary}
  \label{cor:stat_bnd}
  Consider a DTMC or CTMC with transition matrix $P$ or generator matrix $Q$,
  an aggregated step matrix $\Pi$ or an aggregated evolution matrix $\Theta$,
  and a disaggregation matrix $A$. Let $\pi \in \bbR^{m}$ be
  an approximate stationary vector for the aggregated model (i.e.~$\pi^{\transp} \Pi \approx \pi^{\transp}$
  or $\pi^{\transp} \Theta \approx 0$). Then, the following holds
  for $\pi^{\transp} A$, which is an approximation of the stationary
  distribution of the original chain:
  \begin{align*}
    \norm{\pi^{\transp} A P - \pi^{\transp} A}_1
    &\leq \norm{\pi}_1 \norm{\Pi A - A P}_\infty + \norm{\pi^{\transp} \Pi - \pi^{\transp}}_1 \norm{A}_\infty \quad \textrm{ and } \\
    \norm{\pi^{\transp} A Q}_1 &\leq
    \norm{\pi}_1 \norm{\Theta A - A Q}_\infty + \norm{\pi^{\transp} \Theta}_1 \norm{A}_\infty
  \end{align*}
  If $A$ is stochastic and $\pi$ is a probability distribution, then
  \begin{align*}
    \norm{\pi^{\transp} A P - \pi^{\transp} A}_1
    &\leq \norm{\Pi A - A P}_\infty + \norm{\pi^{\transp} \Pi - \pi^{\transp}}_1 \quad \textrm{ and } \quad
    \norm{\pi^{\transp} A Q}_1 \leq
    \norm{\Theta A - A Q}_\infty + \norm{\pi^{\transp} \Theta}_1
  \end{align*}
\end{corollary}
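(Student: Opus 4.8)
The plan is to prove both the discrete- and continuous-time inequalities by the same one-line telescoping argument: insert the aggregated quantity $\pi^{\transp}\Pi A$ (respectively $\pi^{\transp}\Theta A$), split via the triangle inequality into a piece governed by the dynamic-error matrix $\Pi A - AP$ (respectively $\Theta A - AQ$) and a piece governed by the stationarity defect of $\pi$, and then bound each piece with \autoref{lem:1norm_mult}. This is essentially the computation already displayed in the paragraphs preceding the corollary; the only thing the proof has to do is organise it cleanly and handle the stochastic simplification.

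For the DTMC bound I would first write
\begin{align*}
  \pi^{\transp} A P - \pi^{\transp} A = \pi^{\transp}\left(A P - \Pi A\right) + \left(\pi^{\transp}\Pi - \pi^{\transp}\right) A,
\end{align*}
which simply adds and subtracts $\pi^{\transp}\Pi A$. Applying the triangle inequality and then the $\norm{\cdot}_\infty$-version of \autoref{lem:1norm_mult} to each summand yields
\begin{align*}
  \norm{\pi^{\transp} A P - \pi^{\transp} A}_1 \leq \norm{\pi}_1 \norm{\Pi A - A P}_\infty + \norm{\pi^{\transp}\Pi - \pi^{\transp}}_1 \norm{A}_\infty,
\end{align*}
using $\norm{A P - \Pi A}_\infty = \norm{\Pi A - A P}_\infty$. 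The CTMC bound is identical with $\Theta$, $Q$ in place of $\Pi$, $P$; since the target expression is $\pi^{\transp} A Q$ rather than a difference, one writes $\pi^{\transp} A Q = \pi^{\transp}(A Q - \Theta A) + \pi^{\transp}\Theta A$ and proceeds exactly as before, the second summand now being bounded by $\norm{\pi^{\transp}\Theta}_1 \norm{A}_\infty$.

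Finally, for the simplified inequalities I would observe that a stochastic $A$ has all row sums equal to $1$, so $\norm{A}_\infty = 1$, and a probability distribution $\pi$ satisfies $\norm{\pi}_1 = 1$; substituting these into the two general bounds collapses them to the stated form. There is no real obstacle here: the argument is a direct application of the triangle inequality and \autoref{lem:1norm_mult}, and the only genuine choice is \emph{which} cross term to insert. Inserting $\pi^{\transp}\Pi A$ (respectively $\pi^{\transp}\Theta A$) is precisely what makes one summand collapse onto the dynamic-error matrix and the other onto the stationarity defect $\pi^{\transp}\Pi - \pi^{\transp}$ (respectively $\pi^{\transp}\Theta$), so that the exactly-stationary case $\pi^{\transp}\Pi = \pi^{\transp}$ recovers the sharper displays appearing immediately above the corollary.
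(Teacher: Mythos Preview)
Your proposal is correct and follows essentially the same approach as the paper: the paper's argument (given in the paragraphs immediately preceding the corollary) likewise inserts $\pi^{\transp}\Pi A$ (respectively $\pi^{\transp}\Theta A$), applies the triangle inequality, and bounds each summand via \autoref{lem:1norm_mult}, then notes $\norm{A}_\infty = 1$ and $\norm{\pi}_1 = 1$ for the stochastic case.
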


\begin{remark}
  The bounds given above only measure how close to stationarity $\pi^{\transp} A$ is,
  the distance of $\pi^{\transp} A$ to the actual stationary distribution $p$
  of the original chain can be both smaller or larger. Indeed, as shown by the following
  examples, it is both possible that $\norm{p^{\transp} - \pi^{\transp}A}_1 < \norm{\Pi A - A P}_\infty$
  and that $\norm{p^{\transp} - \pi^{\transp}A}_1 > \norm{\Pi A - A P}_\infty$
  if $p$ is a stationary distribution of $P$ and if $\pi$ is a stationary distribution
  for the stochastic matrix $\Pi$ (so we actually have $\pi^{\transp}\Pi = \pi^{\transp}$
  and not just $\pi^{\transp}\Pi \approx \pi^{\transp}$, and the reduced model is a Markov chain
  as $\Pi$ is stochastic).

  \begin{itemize}
    \item To see that $\norm{p^{\transp} - \pi^{\transp}A}_1 < \norm{\Pi A - A P}_\infty$ is possible,
    consider the following (here we used weighted state space partitioning and $\Pi = A P \Lambda$):
    \begin{align*}
      P &= \frac{1}{4} \cdot \begin{pmatrix}
        1 & 1 & 2 \\
        1 & 2 & 1 \\
        2 & 1 & 1
      \end{pmatrix}, \qquad
      A = \begin{pmatrix}
        \frac{1}{2} & \frac{1}{2} & 0 \\
        0 & 0 & 1
      \end{pmatrix}, \qquad
      \Pi = \frac{1}{8} \cdot \begin{pmatrix}
        5 & 3 \\
        6 & 2
      \end{pmatrix} \\
      \implies p &= \left(\frac{1}{3}, \frac{1}{3}, \frac{1}{3}\right)^{\transp},
      \qquad \pi = \left(\frac{2}{3}, \frac{1}{3}\right)^{\transp},
      \qquad \pi^{\transp} A = \left(\frac{1}{3}, \frac{1}{3}, \frac{1}{3}\right) \\
      \implies \norm{p^{\transp} - \pi^{\transp}A}_1 &= 0
      < \frac{1}{4} = \norm{\Pi A - A P}_\infty
    \end{align*}
    \item To see that $\norm{p^{\transp} - \pi^{\transp}A}_1 > \norm{\Pi A - A P}_\infty$
    is possible as well, consider the following (note that the only
    difference to the previous example is the choice of $A$):
    \begin{align*}
      P &= \frac{1}{4} \cdot \begin{pmatrix}
        1 & 1 & 2 \\
        1 & 2 & 1 \\
        2 & 1 & 1
      \end{pmatrix}, \qquad
      A = \begin{pmatrix}
        \frac{3}{4} & \frac{1}{4} & 0 \\
        0 & 0 & 1
      \end{pmatrix}, \qquad
      \Pi = \frac{1}{16} \cdot \begin{pmatrix}
        9 & 7 \\
        12 & 4
      \end{pmatrix} \\
      \implies p &= \left(\frac{1}{3}, \frac{1}{3}, \frac{1}{3}\right)^{\transp},
      \qquad \pi = \left(\frac{12}{19}, \frac{7}{19}\right)^{\transp},
      \qquad \pi^{\transp} A = \left(\frac{9}{19}, \frac{3}{19}, \frac{7}{19}\right) \\
      \implies \norm{p^{\transp} - \pi^{\transp}A}_1 &= \frac{20}{57} > 0.35
      > 0.344 > \frac{11}{32} = \norm{\Pi A - A P}_\infty
    \end{align*}
  \end{itemize}
  Thus, $\norm{\Pi A - A P}_\infty$ is not a bound on how far off the approximated stationary distribution,
  obtained via the aggregated chain, is from the actual stationary distribution
  of the chain. Only the result from \autoref{cor:stat_bnd} holds: $\norm{\Pi A - A P}_\infty$
  is a measure of how far from stationarity $\pi^{\transp}A$ is, in the sense
  that $\norm{\pi^{\transp} A P - \pi^{\transp} A}_1 \leq \norm{\Pi A - A P}_\infty$.
\end{remark}

\subsection{Tightness of the error bound}
\label{sec:erroptimal}

We now want to show
that the bounds given in \autoref{thm:dtmc_bound} and \autoref{thm:ctmc_bound} are tight in the following sense:
if $P$, $\Pi$ and $A$ are fixed and $\norm{\Pi A - A P}_\infty > 0$, then we can find initial vectors $p_0$ and $\pi_0$ such that
the error bounds are actually achieved. We show this only for matrices
$A$ having non-negative entries.

\begin{theorem}
  Consider an aggregation of a DTMC or CTMC, given by
  a disaggregation matrix $A$, and an aggregated step matrix $\Pi$ (respectively
  an aggregated evolution matrix $\Theta$). Note that we do not fix $\pi_0$ yet.
  If every entry of $A$ is non-negative, if every row of $A$ contains at
  least one strictly positive entry, and if $\norm{\Pi A - A P}_\infty > 0$ (respectively $\norm{\Theta A - A Q}_\infty > 0$),
  then, the following holds: there exists an initial distribution
  $p_0$ and an aggregated initial vector $\pi_0$ such that
  $\pi_0^{\transp} A = p_0^{\transp}$ and such that
  $\norm{\widetilde{p}_1 - p_1}_1 = \norm{\pi_0}_1 \cdot \norm{\Pi A - A P}_\infty$ or, for CTMCs,
  $\lim_{t \to 0, t > 0} \frac{1}{t}\norm{\widetilde{p}_t - p_t}_1 = \norm{\pi_0}_1 \cdot \norm{\Theta A - A Q}_\infty$.
\end{theorem}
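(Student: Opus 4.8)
The plan is to exploit the equality case of \autoref{lem:1norm_mult} and to eliminate the initial error by defining $p_0$ through $\pi_0$. Concretely, I would first pick a row on which the relevant matrix norm is attained: writing $M := \Pi A - A P$ for the DTMC (respectively $N := \Theta A - A Q$ for the CTMC), the definition of $\norm{\cdot}_\infty$ supplies an index $\chi^\ast$ with $\sum_{i=1}^n \abs{M(\chi^\ast, i)} = \norm{M}_\infty > 0$. I would then concentrate all mass of $\pi_0$ on this single index, setting $\pi_0(\chi^\ast) := c$ and $\pi_0(\chi) := 0$ for $\chi \neq \chi^\ast$, where $c := \bigl(\sum_{i=1}^n A(\chi^\ast, i)\bigr)^{-1}$. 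Finally I would define $p_0^{\transp} := \pi_0^{\transp} A$, so that the initial error $\norm{\pi_0^{\transp} A - p_0^{\transp}}_1$ is zero by construction.

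Before anything else I would check that this data is admissible, which is where the hypotheses on $A$ enter. Since every entry of $A$ is non-negative and row $\chi^\ast$ has a strictly positive entry, the row sum $\sum_i A(\chi^\ast, i)$ is strictly positive, so $c$ is well-defined and positive and $\norm{\pi_0}_1 = c$. Moreover $p_0^{\transp} = c\,(A(\chi^\ast, 1), \ldots, A(\chi^\ast, n))$ has non-negative entries summing to $c \sum_i A(\chi^\ast, i) = 1$, so $p_0$ is a genuine initial distribution.

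For the DTMC I would then substitute $p_0^{\transp} = \pi_0^{\transp} A$ to obtain
\[ e_1^{\transp} = \pi_0^{\transp} \Pi A - p_0^{\transp} P = \pi_0^{\transp}(\Pi A - A P) = \pi_0^{\transp} M, \]
and since $\pi_0$ is supported on $\chi^\ast$ this gives $\norm{e_1}_1 = c \sum_i \abs{M(\chi^\ast, i)} = c \, \norm{M}_\infty = \norm{\pi_0}_1 \cdot \norm{\Pi A - A P}_\infty$, which is precisely the bound of \autoref{thm:dtmc_bound} at $k = 1$ with vanishing initial error. For the CTMC I would use that each coordinate $f_i(t) := e_t(i)$ of $e_t^{\transp} = \pi_0^{\transp} e^{\Theta t} A - p_0^{\transp} e^{Qt}$ is smooth with $f_i(0) = 0$, and that by \eqref{eq:error_derivative} its derivative at $0$ equals $(\pi_0^{\transp} \Theta A - p_0^{\transp} Q)(i) = (\pi_0^{\transp} N)(i)$. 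Applying \autoref{lem:absfctlim} coordinatewise and summing the $n$ limits yields
\[ \lim_{t \to 0,\, t > 0} \frac{\norm{e_t}_1}{t} = \sum_{i=1}^n \abs{(\pi_0^{\transp} N)(i)} = \norm{\pi_0^{\transp} N}_1 = c \sum_i \abs{N(\chi^\ast, i)} = \norm{\pi_0}_1 \cdot \norm{\Theta A - A Q}_\infty. \]

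The argument is essentially forced, so I expect no deep obstacle; the only points that need care are two bookkeeping steps. First, the admissibility check above is exactly where non-negativity of $A$ and the positive-row hypothesis are used, and without them $p_0$ need not be a probability vector. Second, in the CTMC case I must justify pulling the limit through the finite coordinate sum; this is legitimate because each of the $n$ coordinate limits exists by \autoref{lem:absfctlim}, so the sum of the limits equals the limit of the sum. The conceptual heart is simply the observation that putting all weight of $\pi_0$ on a row of $M$ (respectively $N$) of maximal absolute sum turns the inequality $\norm{v^{\transp} M}_1 \le \norm{v}_1 \norm{M}_\infty$ of \autoref{lem:1norm_mult} into an equality.
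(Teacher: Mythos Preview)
Your proposal is correct and follows essentially the same argument as the paper's proof: both concentrate $\pi_0$ on a single index $\chi^\ast$ realizing the row maximum of $\Pi A - A P$ (respectively $\Theta A - A Q$), define $p_0^{\transp} := \pi_0^{\transp} A$, verify that $p_0$ is a probability distribution using the non-negativity and positive-row hypotheses on $A$, and then compute the error directly for the DTMC and via \eqref{eq:error_derivative} together with \autoref{lem:absfctlim} for the CTMC. Your write-up is in fact slightly more explicit than the paper's in justifying the interchange of limit and finite sum in the CTMC case.
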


\begin{proof}
  Let
  \begin{align*}
    \chi^{\ast} := \argmax_{\chi = 1, \ldots, m} \sum_{i=1}^n \abs{(\Pi A - A P)(\chi, i)}
    \quad \textrm{or, for CTMCs,} \quad
    \chi^{\ast} := \argmax_{\chi = 1, \ldots, m} \sum_{i=1}^n \abs{(\Theta A - A Q)(\chi, i)}
  \end{align*}
  If there are mutliple possible choices for $\chi^{\ast}$, we may choose
  any of them. We now set
  \begin{align*}
    \pi_0(\chi) = \begin{cases}
      0 & \textrm{ if } \chi \neq \chi^{\ast} \\
      \frac{1}{\sum_{i=1}^n \abs{A(\chi^\ast, i)}} & \textrm{ if } \chi = \chi^{\ast}
    \end{cases} \qquad \qquad
    p_0^{\transp} = \pi_0^{\transp}A
  \end{align*}
  Note that setting $p_0$ as above implies that $p_0$ is a probability distribution:
  every entry is non-negative, as both $\pi_0$ and $A$ contain only non-negative entries
  (the latter by assumption), and since
  \begin{align*}
    \sum_{i=1}^n p_0(i) = \sum_{i=1}^n \sum_{\chi=1}^m \pi_0(\chi) A(\chi, i)
    = \sum_{i=1}^n \frac{1}{\sum_{j=1}^n \abs{A(\chi^\ast, j)}} \cdot A(\chi^{\ast}, i) = 1
  \end{align*}
  As we assumed that every row of $A$ contains at least one strictly positive
  entry, we also know that $\sum_{i=1}^n \abs{A(\chi^\ast, i)} > 0$, ensuring that
  we do not divide by zero. But now, in the discrete-time case,
  \begin{align*}
    \norm{\widetilde{p}_1 - p_1}_1
    &= \norm{\pi_0^{\transp} \Pi A - p_0^{\transp} P}_1
    = \norm{\pi_0^{\transp} \Pi A - \pi_0^{\transp} A P}_1
    = \norm{\pi_0^{\transp} (\Pi A - A P)}_1
    = \pi_0(\chi^\ast) \sum_{i=1}^n \abs{(\Pi A - A P)(\chi^\ast, i)} \\
    &= \norm{\pi_0}_1 \cdot \norm{\Pi A - A P}_\infty
  \end{align*}
  where the last equality holds by choice of $\chi^\ast$. Looking at the
  continous-time case, we find that (with the choice of $\pi_0$ and $p_0$
  as given above),
  \begin{align*}
    \left.\frac{\textrm{d}}{\textrm{d}t}\right|_{t = 0} (\widetilde{p}_t^{\transp} - p_t^{\transp})
    &\overset{\textrm{\eqref{eq:error_derivative}}}{=} \pi_0^{\transp}\Theta A - p_0^{\transp}Q
    = \pi_0^{\transp}\Theta A - \pi_0^{\transp}AQ
    = \pi_0^{\transp}(\Theta A - A Q)
  \end{align*}
  Hence, noting that $\norm{\widetilde{p}_0 - p_0}_1 = 0$, we obtain
  \begin{align*}
    \lim_{\substack{t \to 0\\t>0}} \frac{1}{t}\norm{\widetilde{p}_t - p_t}_1 &= \sum_{i=1}^n \lim_{\substack{t \to 0\\t>0}} \frac{\abs{(\widetilde{p}_t - p_t)(i)}}{t}
    \;\;\overset{\textrm{\autoref{lem:absfctlim}}}{=}\;\; \sum_{i=1}^n \abs{\left(\pi_0^{\transp}(\Theta A - A Q)\right)(i)}
    = \norm{\pi_0^{\transp} (\Theta A - A Q)}_1\\
    &= \ldots \textrm{ (as in the discrete-time case) } = \norm{\pi_0}_1 \cdot \norm{\Theta A - A Q}_\infty \qedhere
  \end{align*}
\end{proof}

\subsection{Reduction to arbitrarily low dimension}

As shown in the following proposition, we can actually find dynamic-exact
aggregations of any arbitrarily low dimension.

\begin{proposition}
  \label{prop:schur}
  Consider a DTMC or CTMC on a state space of size $n$. For every $m < n$,
  there either exist matrices $A \in \bbR^{m \times n}$ and $\Pi \in \bbR^{m \times m}$ (respectively $\Theta$)
  or matrices $A \in \bbR^{(m+1) \times n}$ and $\Pi \in \bbR^{(m+1) \times (m+1)}$ (respectively $\Theta$)
  where $A$ has full rank and such that
  \begin{align*}
    \Pi A = A P \quad \textrm{ or, for CTMCs, } \quad \Theta A = A Q
  \end{align*}
\end{proposition}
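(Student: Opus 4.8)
The plan is to translate the intertwining equation $\Pi A = AP$ into a statement about invariant subspaces and then to produce such a subspace of the right dimension from the real Schur decomposition. First I would observe that if the rows of $A$, read as vectors in $\bbR^n$, span a subspace $V \subseteq \bbR^n$ which is invariant under the map $v^{\transp} \mapsto v^{\transp} P$ — equivalently, $V$ is invariant under $P^{\transp}$ once we identify rows with columns via transposition — then every row of $AP$ again lies in $V$ and is therefore a linear combination of the rows of $A$. Because $A$ has full row rank, that linear combination is unique and its coefficients assemble into a matrix $\Pi$ with $\Pi A = AP$. So the whole problem reduces to exhibiting, for each $m < n$, a $P^{\transp}$-invariant subspace of dimension $m$ or $m+1$ and taking a basis of it as the rows of $A$; full rank of $A$ is then automatic.

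Next I would invoke the real Schur decomposition $P^{\transp} = U T U^{\transp}$, with $U$ orthogonal and $T$ quasi-upper-triangular, i.e.\ block upper triangular with diagonal blocks of size $1$ (real eigenvalues) or $2$ (conjugate pairs of genuinely complex eigenvalues). Writing $u_1, \ldots, u_n$ for the columns of $U$, the span of the first $k$ columns is $P^{\transp}$-invariant precisely when the cut after position $k$ does not slice through a $2 \times 2$ diagonal block: indeed $P^{\transp} u_j$ is supported on $u_1, \ldots, u_{j+1}$, and the only nonzero subdiagonal entries of $T$ sit inside the $2 \times 2$ blocks. Let $K \subseteq \{0, 1, \ldots, n\}$ collect these admissible cut points; it contains both $0$ and $n$.

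The key combinatorial observation is that consecutive elements of $K$ differ by at most $2$, since each diagonal block forbids a cut only in its interior and each block has size at most $2$. Hence for any target $m$ with $1 \le m \le n-1$, at least one of $m$ and $m+1$ lies in $K$: if $m \notin K$, then $m$ is interior to a $2 \times 2$ block occupying positions $m, m+1$, which forces $m+1 \in K$. Choosing the corresponding admissible cut $k \in \{m, m+1\}$ and letting the rows of $A$ be $u_1^{\transp}, \ldots, u_k^{\transp}$ gives a full-rank $A \in \bbR^{k \times n}$ with the required invariance, and the uniquely determined $\Pi \in \bbR^{k \times k}$ satisfies $\Pi A = AP$. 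The continuous-time case is identical, with $Q$ in place of $P$ and $\Theta$ in place of $\Pi$.

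The main obstacle — and precisely the reason the statement permits dimension $m+1$ rather than exactly $m$ — is that a real matrix need not admit real invariant subspaces of every dimension: the $2 \times 2$ blocks of the real Schur form, which correspond to genuinely complex eigenvalues, can only be taken whole over $\bbR$. Keeping the transpose bookkeeping straight (left- versus right-multiplication) and confirming that the leading spans of the Schur basis are exactly the invariant subspaces one needs are the points that demand some care, but both become routine once the real Schur form is in hand.
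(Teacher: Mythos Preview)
Your proof is correct and follows essentially the same approach as the paper: both use the real Schur decomposition $P^{\transp} = U T U^{\transp}$ and take $A$ to consist of the first $m$ or $m+1$ rows of $U^{\transp}$, with the dimension choice dictated by whether the cut would split a $2\times 2$ diagonal block. Your framing via $P^{\transp}$-invariant subspaces is a bit more conceptual than the paper's direct matrix computation, but the construction and the handling of the possible $m$ versus $m+1$ issue are the same.
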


\begin{proof}
  This follows from the existence of the real Schur decomposition of $P^{\transp}$
  (see \cite[Theorem 7.4.1]{matrixcomputations}).
  Indeed, there exists an orthogonal matrix $U \in \bbR^{n \times n}$ and
  a quasi upper triangular matrix $T \in \bbR^{n \times n}$ such that
  $P^{\transp} = U T U^{\transp}$. Here, quasi upper triangular means that
  \begin{align*}
    T = \begin{pmatrix}
      T_{11} & T_{12} & \ldots & T_{1k} \\
      0 & T_{22} & \ldots & T_{2k} \\
      \vdots &  & \ddots & \vdots \\
      0 & 0 & \ldots & T_{kk}
    \end{pmatrix}
    \textrm{ where } T_{ii} \in \bbR^{1\times 1} \textrm{ or } T_{ii} \in \bbR^{2\times 2} \;\;\;\; \forall i \in \{1,\ldots,k\}
  \end{align*}
  so $T$ is almost upper triangular with the exception that there might be non-zero values
  on the diagonal one below the main diagonal. The $1 \times 1$ blocks on
  the diagonal correspond to the real eigenvalues of the matrix $P^{\transp}$, and the $2 \times 2$ blocks
  to the complex eigenvalues (see \cite[Section 7.4.1]{matrixcomputations} for details). Hence, we have that
  \begin{align*}
    P = U T^{\transp} U^{\transp}
    \implies
    U^{\transp} P = \begin{pmatrix}
      T_{11}^{\transp} & 0 & \ldots & 0 \\
      T_{12}^{\transp} & T_{22}^{\transp} & \ldots & 0 \\
      \vdots &  & \ddots & \vdots \\
      T_{1k}^{\transp} & T_{2k}^{\transp} & \ldots & T_{kk}^{\transp}
    \end{pmatrix} U^{\transp}
  \end{align*}
  We can now simply choose $\Pi$ to correspond to the first $m$ rows and first $m$ columns
  of $T^{\transp}$ if we do not cut one of the $T_{ij}$ blocks in half by
  doing this. Otherwise, we can choose $\Pi$ to correspond to the first $m+1$ rows and first $m+1$ columns
  of $T^{\transp}$. If we then choose $A$ to be the first $m$ or $m+1$ rows
  of $U^{\transp}$ (and all columns), it does hold that $A P = \Pi A$, as required.
  Note that $A$ will have full rank as the rows of $A$ are
  orthonormal vectors.
  The same method can be applied to obtain $A$ and $\Theta$ for a generator
  matrix $Q$.
\end{proof}

\begin{remark}
  In \autoref{prop:schur}, it was stated that $A$ can be chosen to have
  full rank. $\Pi$ can actually also be chosen to have full rank if it is calculated
  using the Schur decomposition, and if the dimension of the reduced state
  space is at most the rank of $P$ (this can be seen easily as the submatrix consisting of the first $m$ rows
  and columns of $T^{\transp}$ in the proof of \autoref{prop:schur} will have
  full rank if the blocks on the diagonal correspond to non-zero eigenvalues, where
  we note that the order in which the eigenvalue blocks appear on the diagonal
  can be chosen arbitrarily in the Schur decomposition with an appropriate algorithm).

  It makes sense to choose full-rank matrices -- otherwise, there would
  still be redundant information in the reduced model. Indeed, if we
  consider arbitrary matrices $A$ and $\Pi$ which do not necessarily have
  full rank, the following trivial aggregation will always be dynamic-exact
  (for simplicity, we assume that $P$ gives rise to a unique stationary
  distribution):
  \begin{align*}
    A = \begin{pmatrix}
      \hpipe & p^{\transp} & \hpipe \\
      & \vdots & \\
      \hpipe & p^{\transp} & \hpipe
    \end{pmatrix}, \qquad
    \Pi = \begin{pmatrix}
      \frac{1}{m} & \ldots & \frac{1}{m} \\
      \vdots & \ddots & \vdots \\
      \frac{1}{m} & \ldots & \frac{1}{m}
    \end{pmatrix} \qquad
    \textrm{where } p \textrm{ is the stationary distribution of } P
  \end{align*}
  In this case, $AP = A$ by choice of $A$, and it is easy to see that
  we also have $\Pi A = A$. In fact, choosing $A = p^{\transp}$ and
  $\Pi = ( 1 ) \in \bbR^{1 \times 1}$ always results in a dynamic-exact aggregation
  with only a single aggregate state. This aggregation does furthermore
  respect the restrictions of weighted state space partitioning, so we
  can always find a dynamic-exact aggregation with $m=1$ aggregate when using
  weighted state space partitioning.
\end{remark}

At this point, one might ask why we should consider dynamic-exact aggregations
with more than a single aggregate or even why we should consider aggregations
which are not dynamic-exact. The biggest disadvantage of a dynamic-exact
aggregation with a low number of aggregates is that the initial error
$\norm{\pi_0^{\transp} A - p_0^{\transp}}_1$ increases as $m$ decreases,
generally speaking, since with decreasing $m$, we have fewer degrees of
freedom for choosing our approximate initial distribution. The other disadvantage
is the computational cost of finding the Schur decomposition or the
stationary distribution. It takes
$\mathcal{O}(n^3)$ time to perform a Schur decomposition (see \cite[Section 7.4]{matrixcomputations}), which can
quickly get out of hand for large state spaces where we are actually
most interested in reducing the state space. As for finding the
stationary distribution: if we are actually interested in the transient
distributions, then the dynamic-exact aggregation with $A = p^{\transp}$ and
$\Pi = ( 1 ) \in \bbR^{1 \times 1}$ will not be helpful at all, and
for stationary analysis, we are interested in aggregation for speeding
up the computation of the stationary distribution, so this dynamic-exact
aggregation is obviously not helpful, either.

While we can, in theory, find dynamic-exact aggregations with reduced state spaces of arbitrarily low
dimension, this is not possible for exact aggregations. Consider a reduction to
a state space of dimension $m=1$. The matrix $\Pi$ will reduce to a scalar value,
and the matrix $A$ to a single row. For $\Pi A = A P$ to hold,
we would therefore need that the row of $A$ is a left eigenvector of $P$.
If we also require that $\pi_0^{\transp} A = p_0^{\transp}$ (note that $\pi_0$ also
reduces to a scalar value for $m=1$) which we need for an exact aggregation,
then we find that the row of $A$ must furthermore be a multiple of the
initial distribution $p_0$. But a left eigenvector of $P$ which is a multiple
of a probability distribution necessarily is a stationary distribution of
the Markov chain, if normed appropriately. Hence, an exact aggregation to
a state space of dimension $1$ exists if, and only if, $p_0$ is a stationary
distribution of the Markov chain.
It might be that a result along the lines of ``an exact aggregation of dimension $k$ exists
if and only if we can find $k$ left eigenvectors of $P$ such that the initial
distribution $p_0$ is a linear combination of these eigenvectors'' holds.
This merits further investigation.

To conclude, we give yet another example:
\begin{align*}
  P = \begin{pmatrix}
    0 & 1 & 0 \\ 0 & 0 & 1 \\ 1 & 0 & 0
  \end{pmatrix}, \qquad
  p_0 = \begin{pmatrix}
    1 \\ 0 \\ 0
  \end{pmatrix}
\end{align*}
This defines a DTMC which cycles along its three states. It is relatively easy
to show that no exact aggregations with reduced state spaces of dimension $1$ or $2$
exist here, but the Schur decomposition approach from the proof of \autoref{prop:schur}
would yield a dynamic-exact aggregation with $1$ or $2$ aggregated states.
One possible Schur decomposition is the following:
\begin{align*}
  P = \underbrace{
    \begin{pmatrix}[1.2]
      0 & \sqrt{\frac{2}{3}} & \frac{1}{\sqrt{3}} \\
      \frac{1}{\sqrt{2}} & -\frac{1}{\sqrt{6}} & \frac{1}{\sqrt{3}} \\
      -\frac{1}{\sqrt{2}} & -\frac{1}{\sqrt{6}} & \frac{1}{\sqrt{3}}
    \end{pmatrix}}_{U}
  \underbrace{\begin{pmatrix}[1.2]
    -\frac{1}{2} & -\frac{\sqrt{3}}{2} & 0 \\
    \frac{\sqrt{3}}{2} & -\frac{1}{2} & 0 \\
    0 & 0 & 1
  \end{pmatrix}}_{T^{\transp}}
  U^{\transp}
\end{align*}
Here, the eigenvalues were ordered such that the complex conjugate block
of the eigenvalues $-\frac{1}{2}\pm \frac{\sqrt{3}}{2}\ivar$ appears first.
We thus have the following dynamic-exact aggregation:
\begin{align*}
  A = \begin{pmatrix}[1.2]
    0 & \frac{1}{\sqrt{2}} & -\frac{1}{\sqrt{2}} \\
    \sqrt{\frac{2}{3}} & -\frac{1}{\sqrt{6}} & -\frac{1}{\sqrt{6}}
  \end{pmatrix}, \qquad
  \Pi = \begin{pmatrix}[1.2]
    -\frac{1}{2} & -\frac{\sqrt{3}}{2} \\
    \frac{\sqrt{3}}{2} & -\frac{1}{2}
  \end{pmatrix}
\end{align*}
Note, however, that this dynamic-exact aggregation is not so useful for
the given initial distribution $p_0$. We will obtain the initial error
\begin{align*}
  \norm{\pi_0^{\transp} A - p_0^{\transp}}_1
  &= \abs{\pi_0(2) \cdot \sqrt{\frac{2}{3}} - 1}
  + \abs{\pi_0(1) \cdot \frac{1}{\sqrt{2}} - \pi_0(2) \cdot \frac{1}{\sqrt{6}}}
  + \abs{- \pi_0(1) \cdot \frac{1}{\sqrt{2}} - \pi_0(2) \cdot \frac{1}{\sqrt{6}}} \\
  &= \frac{1}{\sqrt{6}}\left(\abs{2\pi_0(2)-\sqrt{6}} + \abs{\sqrt{3}\pi_0(1) - \pi_0(2)} + \abs{\sqrt{3}\pi_0(1) + \pi_0(2)}\right)
\end{align*}
We see that $\pi_0(1) = 0$ (sometimes among other choices) always minimizes the error, regardless of the
choice of $\pi_0(2)$ (due to the last two absolute values in the sum above).
Hence, we set $\pi_0(1) = 0$ and get
\begin{align*}
  \norm{\pi_0^{\transp} A - p_0^{\transp}}_1 &= \frac{2}{\sqrt{6}}\left(\abs{\pi_0(2)-\frac{\sqrt{6}}{2}} + \abs{\pi_0(2)}\right)
\end{align*}
Any choice of $\pi_0(2) \in \left[0, \frac{\sqrt{6}}{2}\right]$ minimizes the above expression,
resulting in $\norm{\pi_0^{\transp} A - p_0^{\transp}}_1 = 1$. We could e.g.~choose
$\pi_0 = (0,0)^{\transp}$ to achieve the bound, resulting in $\widetilde{p}_k = (0,0,0)^{\transp}$
for all $k$ which is obviously not a very helpful approximation of $p_k$.

\subsection{Summary of results}

\begin{table}[H]
  \begin{center}
    \begin{tabular}{|c|c|}
      \hline
      \inlinehalfcell{\textbf{DTMCs}} & \inlinehalfcell{\textbf{CTMCs}} \\
      \inlinehalfcell{$\pi_0$ probability distribution, $\Pi$ and $A$ stochastic, $\pi$ stationary probability distribution for $\Pi$}
      & \inlinehalfcell{$\pi_0$ probability distribution, $\Theta$ generator, $A$ stochastic, $\pi$ stationary probability distribution for $\Theta$} \\ \hline
      \multicolumn{2}{|c|}{\inlinefullcell{transient error bounds (precise)}} \\
      \mathhalfcell{\begin{gather*}
        \norm{e_k}_1 \leq \\ \norm{\pi_0^{\transp} A - p_0^{\transp}}_1 + \sum_{j=0}^{k-1} \iprod{\abs{\pi_j}}{\abs{\Pi A - A P} \cdot \mathbf{1}_n}
      \end{gather*}}
      & \mathhalfcell{\begin{gather*}
        \norm{e_t}_1 \leq  \\ \norm{\pi_0^{\transp} A - p_0^{\transp}}_1 + \int_0^t \left< \abs{\pi_u}, \;\; \abs{\Theta A - A Q} \cdot \mathbf{1}_n \right> \dx{u}
      \end{gather*}} \\ \hline
      \multicolumn{2}{|c|}{\inlinefullcell{transient error bounds (imprecise)}} \\
      \inlinehalfcell{$\displaystyle \norm{e_k}_1 \leq \norm{\pi_0^{\transp} A - p_0^{\transp}}_1 + k \cdot \norm{\Pi A - A P}_\infty$}
      & \inlinehalfcell{$\displaystyle \norm{e_t}_1 \leq \norm{\pi_0^{\transp} A - p_0^{\transp}}_1 + t \cdot \norm{\Theta A - A Q}_\infty$} \\ \hline
      \multicolumn{2}{|c|}{\inlinefullcell{stationary error bounds}} \\
      \inlinehalfcell{$\displaystyle \norm{\pi^{\transp} A P - \pi^{\transp} A}_1 \leq \norm{\Pi A - A P}_\infty$}
      & \inlinehalfcell{$\displaystyle \norm{\pi^{\transp} A Q}_1 \leq \norm{\Theta A - A Q}_\infty$} \\ \hline
    \end{tabular}
  \end{center}
  \caption{Overview of error bounds}
  \label{tab:resum}
\end{table}

\autoref{tab:resum} lists the most important error bounds
(from \autoref{thm:dtmc_bound}, \autoref{thm:ctmc_bound} and \autoref{cor:stat_bnd}) for the case
where $\Pi$ and $A$ are stochastic and $\Theta$ is a generator, which is
most relevant in practical applications. $\norm{\Pi A - A P}_\infty$ and
$\norm{\Theta A - A Q}_\infty$ can be seen as measuring how well the
aggregated model reflects the transient and stationary behavior of the
full Markov chain, and $\norm{\pi_0^{\transp} A - p_0^{\transp}}_1$ measures
the error made when approximating the initial distribution, which is
relevant for the faithfulness of the approximation of the transient behavior as well.

\section{Lumpability and aggregatability}
\label{sec:lump}

In this section, we will put the error bounds presented in the previous
section in context by comparing them with concepts from the
related literature. As most publications focus on weighted state space
partitioning as defined in \autoref{ssec:prelim_mc}, we will focus on this
special case in this section, i.e.~the state space $S$ of the original chain is partitioned into aggregates
by $\Omega$, each state $s$ of the original chain is assigned weight $\alpha(s)$
within its aggregate, we set $\Pi = A P \Lambda$, $\Theta = A Q \Lambda$, $\pi_0^{\transp} = p_0^{\transp} \Lambda$,
and the rows of $A$ are the $\alpha$ distributions. In what is to come,
we will index $P$ by pairs of states $r, s \in S$, i.e.~we will write $P(r, s)$,
and we will index $\Pi$ by pairs of aggregates $\rho, \sigma \in \Omega$, i.e.~$\Pi(\rho, \sigma)$.

\subsection{An alternative characterization of dynamic-exactness}

We first compare the notion of exactness and dynamic-exactness as defined
in \autoref{def:exactagg} with a similar notion from \cite{mcaggreactnet}
for aggregations via weighted state space partitioning. Recall the following
notation: $\omega : S \to \Omega$ maps a state $s$ to its aggregate $\omega(s)$.
\cite[Theorem 2]{mcaggreactnet} and \cite[Theorem 9]{mcaggreactnet}
state that the distributions $\widetilde{p}_k$ (respectively $\widetilde{p}_t$)
are equal to $p_k$ (respectively $p_t$) under the following conditions:
\begin{enumerate}[(i)]
  \item \label{it:cond1}$\displaystyle \forall s, s' \in S \textrm{ s.t. } \omega(s) = \omega(s'): \forall \rho \in \Omega:
  \frac{\sum_{r \in \rho} \alpha(r) P(r,s)}{\alpha(s)}
  = \frac{\sum_{r \in \rho} \alpha(r) P(r,s')}{\alpha(s')}$
  \item \label{it:initrespect}the initial distribution $p_0$ is compatible with the $\alpha$ distributions
  (compare with \autoref{def:compat}; this is called $p_0$ respects the $\alpha$ distributions in \cite{mcaggreactnet}).
  Recall that this is the case when $p_0^{\transp} \Lambda A = p_0^{\transp}$, or
  $\pi_0^{\transp} A = p_0^{\transp}$ as $p_0^{\transp} \Lambda = \pi_0^{\transp}$
  if we consider weighted state space partitioning.
\end{enumerate}

The next proposition, \autoref{prop:cond1}, shows that \ref{it:cond1}
is equivalent to $\norm{\Pi A - A P}_\infty = 0$ (respectively $\norm{\Theta A - A Q}_\infty = 0$) and thus by
\autoref{def:exactagg} equivalent to dynamic-exactness. It follows that conditions
\ref{it:cond1} and \ref{it:initrespect} together are equivalent to
exactness, and hence imply (see \autoref{cor:exact_errbnd}) that the
transient distributions $\widetilde{p}_k$ (respectively $\widetilde{p}_t$)
agree with $p_k$ (respectively $p_t$), which is exactly the statement
of \cite[Theorem 2]{mcaggreactnet} and \cite[Theorem 9]{mcaggreactnet},
but obtained via a different proof.

\begin{proposition}
  \label{prop:cond1}
  Given an irreducible DTMC, a partition $\Omega$ of its state space, and arbitrary probability distributions $\alpha_\sigma$
  with support on $\sigma \in \Omega$, let $\Pi = A P \Lambda$ (where the row vectors of $A$
  correspond to the distributions $\alpha_\sigma$). Then:
  \begin{gather*}
    \norm{\Pi A - A P}_\infty = 0 \\
    \iff \\
    \forall s \in S: \alpha(s) > 0 \textrm{ and } \\
    \forall s, s' \in S \textrm{ s.t. } \omega(s) = \omega(s'): \forall \rho \in \Omega:
    \frac{\sum_{r \in \rho} \alpha(r) P(r,s)}{\alpha(s)}
    = \frac{\sum_{r \in \rho} \alpha(r) P(r,s')}{\alpha(s')}
  \end{gather*}
  Furthermore, the assumption $\Pi = A P \Lambda$ is only needed for the
  $\impliedby$ direction, the other direction $\implies$ in the equation
  above holds for an arbitrary choice of $\Pi$.
  The same statement holds for irreducible CTMCs with $\Pi$ replaced by $\Theta$ and
  $P$ replaced by $Q$.
\end{proposition}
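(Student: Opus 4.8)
The plan is to first convert the operator-norm identity into a family of scalar equations. Since $\norm{\cdot}_\infty$ vanishes exactly when the matrix does, the condition $\norm{\Pi A - A P}_\infty = 0$ is equivalent to $\Pi A = A P$. Writing the shorthand $\beta_\sigma(s) := \sum_{r \in \sigma} \alpha(r) P(r, s)$, I would compute both products entrywise using the weighted-partitioning structure of $A$ and $\Lambda$: because the row $\alpha_\rho$ of $A$ is supported on $\rho$, one finds $(\Pi A)(\sigma, s) = \Pi(\sigma, \omega(s))\,\alpha(s)$ and $(A P)(\sigma, s) = \beta_\sigma(s)$. Hence $\Pi A = A P$ is equivalent to
\begin{align*}
  \Pi(\sigma, \omega(s))\,\alpha(s) = \beta_\sigma(s) \quad \textrm{for all } \sigma \in \Omega,\; s \in S,
\end{align*}
and the whole proposition reduces to manipulating these scalar equations.

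For the direction $\implies$ (which should hold for arbitrary $\Pi$, since the scalar equations follow from $\Pi A = A P$ irrespective of the form of $\Pi$), I would first establish that $\alpha(s) > 0$ for every $s$; this is the only place irreducibility enters. Suppose $\alpha(s_0) = 0$ for some $s_0$. The scalar equations then force $\beta_\sigma(s_0) = 0$ for every $\sigma$, and summing over the partition gives $\sum_{r \in S} \alpha(r) P(r, s_0) = 0$. As all summands are non-negative, every state $r$ with $\alpha(r) > 0$ satisfies $P(r, s_0) = 0$. Since each $\alpha_\sigma$ is a probability distribution, the positive-weight set $\{s : \alpha(s) > 0\}$ is non-empty; the argument shows it admits no transition into $\{s : \alpha(s) = 0\}$, so if the latter were non-empty the former would be a proper non-empty subset closed under the dynamics, contradicting irreducibility. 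With $\alpha(s) > 0$ in hand, the scalar equations yield $\beta_\rho(s)/\alpha(s) = \Pi(\rho, \omega(s))$, a quantity depending only on $\rho$ and $\omega(s)$; for $s, s'$ in the same aggregate the two ratios therefore coincide, which is exactly condition \ref{it:cond1}.

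For the converse $\impliedby$ I would use $\Pi = A P \Lambda$, which evaluates to $\Pi(\sigma, \rho) = \sum_{s \in \rho} \beta_\sigma(s)$. Fixing $\sigma$ and $s$ and writing $\tau = \omega(s)$, the ratio hypothesis gives $\beta_\sigma(s') = (\beta_\sigma(s)/\alpha(s))\,\alpha(s')$ for all $s' \in \tau$; summing over $s' \in \tau$ and using $\sum_{s' \in \tau} \alpha(s') = 1$ yields $\Pi(\sigma, \tau) = \beta_\sigma(s)/\alpha(s)$, so that $\Pi(\sigma, \tau)\,\alpha(s) = \beta_\sigma(s)$, which is the scalar equation and hence $\Pi A = A P$. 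Note that this direction needs the explicit form of $\Pi$ but not irreducibility, since positivity of $\alpha$ is assumed here.

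The CTMC case is identical after replacing $\Pi, P$ by $\Theta, Q$, with one wrinkle in the positivity step: the sum $\sum_{r \in S} \alpha(r) Q(r, s_0) = 0$ now contains the diagonal term $\alpha(s_0) Q(s_0, s_0)$, which vanishes because $\alpha(s_0) = 0$; the remaining off-diagonal rates $Q(r, s_0)$ are non-negative, so the same non-negativity argument applies and irreducibility again forces $\alpha(s) > 0$ everywhere. The main obstacle throughout is precisely this positivity argument: it is what makes ``$\alpha(s) > 0$'' automatic in the $\implies$ direction rather than an extra hypothesis, and handling the sign of the generator's diagonal is the only genuinely new point in passing to continuous time.
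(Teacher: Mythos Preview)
Your proof is correct and follows essentially the same approach as the paper: reduce $\Pi A = A P$ to the scalar equations $\Pi(\rho,\omega(s))\,\alpha(s) = \sum_{r\in\rho}\alpha(r)P(r,s)$, use irreducibility to force $\alpha(s)>0$ via an absorbing-set contradiction, and for the converse recover $\Pi(\rho,\sigma)$ by summing the ratio identity over $s'\in\sigma$ with weights $\alpha(s')$. Your treatment of the CTMC positivity step (isolating the vanishing diagonal term) is in fact more explicit than the paper's, which simply remarks that the same proof goes through.
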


\begin{proof}
  We have that
  \begin{align}
    \label{eq:tau_zero}
    \begin{split}
      \norm{\Pi A - A P}_\infty = 0
      &\iff \forall \rho \in \Omega: \forall s \in S: (\Pi A)(\rho, s) = (A P)(\rho, s) \\
      &\iff \forall \rho \in \Omega: \forall s \in S: \sum_{\sigma \in \Omega}\Pi(\rho, \sigma) A(\sigma, s) = \sum_{r \in S}A(\rho, r) P(r, s) \\
      &\iff \forall \rho \in \Omega: \forall s \in S: \Pi(\rho, \omega(s)) \alpha(s) = \sum_{r \in \rho} \alpha(r) P(r, s) \\
      &\iff \forall \rho, \sigma \in \Omega: \forall s \in \sigma: \;\; \alpha(s)\Pi(\rho, \sigma) = \sum_{r \in \rho} \alpha(r) P(r,s)
    \end{split}
  \end{align}
  We first show by contradiction: $\norm{\Pi A - A P}_\infty = 0$ implies
  $\forall s \in S: \alpha(s) > 0$.
  Call $S_0 := \{s_0 \in S : \alpha(s_0) = 0\} \neq S$
  (we have $S_0 \neq S$ because the $\alpha_\sigma$ are probability distributions,
  so there must be at least one state $s_{\neg 0}$ with $\alpha(s_{\neg 0}) > 0$).
  If $S_0 \neq \varnothing$, then,
  by our assumption of irreducibility, there must be some state $s \in S_0$ and
  some state $r \in S \setminus S_0$ (hence $\alpha(r) > 0$) with $P(r, s) > 0$.
  In particular, we have that $\sum_{r' \in \omega(r)} \alpha(r') P(r', s) > 0$
  while $\alpha(s) \Pi(\omega(r),\omega(s)) = 0$. Thus, since the last statement
  of \eqref{eq:tau_zero} does not hold in this case (it is violated for
  $\rho = \omega(r)$, $\sigma = \omega(s)$ and $s$), it follows that $\norm{\Pi A - A P}_\infty > 0$.
  $S_0 \neq \varnothing$ is thus impossible when $\norm{\Pi A - A P}_\infty = 0$, and hence, writing $\circledast$ for $\forall s \in S: \alpha(s) > 0$:
  \begin{align*}
    \norm{\Pi A - A P}_\infty = 0
    &\iff \circledast \textrm{ and } \forall \rho, \sigma \in \Omega: \forall s \in \sigma: \;\; \alpha(s)\Pi(\rho, \sigma) = \sum_{r \in \rho} \alpha(r) P(r,s) \\
    &\iff \circledast \textrm{ and } \forall \rho, \sigma \in \Omega: \forall s \in \sigma: \;\; \Pi(\rho, \sigma) = \frac{\sum_{r \in \rho} \alpha(r) P(r,s)}{\alpha(s)} \\
    &\overset{\circledcirc}{\iff} \circledast \textrm{ and } \forall \rho, \sigma \in \Omega: \forall s, s' \in \sigma: \;\; \frac{\sum_{r \in \rho} \alpha(r) P(r,s)}{\alpha(s)} = \frac{\sum_{r \in \rho} \alpha(r) P(r,s')}{\alpha(s')}
  \end{align*}
  So far, we did not use the assumption $\Pi = A P \Lambda$.
  We now show $\circledcirc$ and note that we only need $\Pi = A P \Lambda$
  for the direction $\impliedby$:
  \begin{itemize}
    \item $\implies$: this is immediately clear since $\Pi(\rho, \sigma)$ does not depend on $s$.
    \item $\impliedby$: if we have
    \begin{align*}
      \frac{\sum_{r \in \rho} \alpha(r) P(r,s)}{\alpha(s)} = \frac{\sum_{r \in \rho} \alpha(r) P(r,s')}{\alpha(s')}
      \;\; \forall s, s' \in \sigma
    \end{align*}
    then, it holds that, for any $s \in \sigma$,
    \begin{align*}
      \frac{\sum_{r \in \rho} \alpha(r) P(r, s)}{\alpha(s)}
      = \sum_{s' \in \sigma} \alpha(s') \cdot \frac{\sum_{r \in \rho} \alpha(r) P(r, s')}{\alpha(s')}
      = \sum_{r \in \rho} \alpha(r) \sum_{s' \in \sigma} P(r, s') = (AP\Lambda)(\rho, \sigma) = \Pi(\rho, \sigma)
    \end{align*}
    where the first equality holds since the sum on the right hand side consists
    of a weighted sum of terms which are equal, and since the weights sum up to $1$.
  \end{itemize}
  Note that the above proof also works for the continuous-time case.
\end{proof}

\autoref{prop:cond1} also allows us to justify the choice $\Pi = A P \Lambda$
(or $\Theta = A Q \Lambda$) for weighted state space partitioning:

\begin{corollary}
  \label{cor:error0_pi_APLambda}
  Given an irreducible DTMC (or CTMC),
  assume that the partition $\Omega$ is fixed, and that the distributions
  $\alpha$ are fixed as well (but arbitrary). Further assume that there
  exists some (arbitrary) matrix $\Pi \in \bbR^{m \times m}$ (respectively, $\Theta \in \bbR^{m \times m}$)
  such that $\norm{\Pi A - A P}_\infty = 0$ (respectively $\norm{\Theta A - A Q}_\infty = 0$).
  Then, it actually holds that $\Pi = A P \Lambda$ (respectively $\Theta = A Q \Lambda$),
  i.e.~$\Pi = A P \Lambda$ is the unique minimizer of
  $\norm{\Pi A - A P}_\infty$ \textbf{if} the minimum is $0$.
\end{corollary}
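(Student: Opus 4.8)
The plan is to derive this corollary directly from the computations already carried out in the proof of \autoref{prop:cond1}. The hypothesis here is that $\norm{\Pi A - A P}_\infty = 0$ holds for \emph{some} matrix $\Pi$, with no a priori assumption that $\Pi = A P \Lambda$; this places us squarely in the $\implies$ direction of \autoref{prop:cond1}, which --- as emphasized there --- never invokes the relation $\Pi = A P \Lambda$. The strategy is therefore to run that direction so as to pin down every entry $\Pi(\rho, \sigma)$ explicitly, and then to recognize the resulting expression as $(A P \Lambda)(\rho, \sigma)$.

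First I would invoke the contradiction argument from the proof of \autoref{prop:cond1} showing that $\norm{\Pi A - A P}_\infty = 0$ together with irreducibility forces $\alpha(s) > 0$ for every $s \in S$; that argument rests only on irreducibility and the equivalences in \eqref{eq:tau_zero}, not on the choice of $\Pi$. With $\alpha(s) > 0$ throughout, the chain \eqref{eq:tau_zero} yields, for all $\rho, \sigma \in \Omega$ and all $s \in \sigma$,
\begin{align*}
  \Pi(\rho, \sigma) = \frac{\sum_{r \in \rho} \alpha(r) P(r,s)}{\alpha(s)}.
\end{align*}

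Since the left-hand side is independent of $s$, the ratio on the right must be constant over $s \in \sigma$, which is exactly the hypothesis feeding the $\impliedby$ computation in \autoref{prop:cond1}. Averaging that constant value against the weights $\alpha(s')$ (which sum to $1$ because each $\alpha_\sigma$ is a probability distribution) reproduces the identity
\begin{align*}
  \Pi(\rho, \sigma) = \sum_{s' \in \sigma} \alpha(s') \frac{\sum_{r \in \rho} \alpha(r) P(r,s')}{\alpha(s')} = \sum_{r \in \rho} \alpha(r) \sum_{s' \in \sigma} P(r, s') = (A P \Lambda)(\rho, \sigma),
\end{align*}
so $\Pi = A P \Lambda$. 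For uniqueness, any minimizer attaining the value $0$ satisfies $\norm{\Pi A - A P}_\infty = 0$ and hence equals $A P \Lambda$ by the above, while the $\impliedby$ direction of \autoref{prop:cond1} confirms that $A P \Lambda$ does attain $0$ under the condition we have verified; thus $A P \Lambda$ is the unique minimizer whenever the minimum is $0$. The CTMC case is identical with $P$ and $\Pi$ replaced by $Q$ and $\Theta$. I expect no genuine obstacle here: the sole subtlety is the bookkeeping of confirming that neither the positivity argument nor the explicit formula for $\Pi(\rho, \sigma)$ ever secretly relied on $\Pi = A P \Lambda$, so that both may legitimately be applied to the arbitrary $\Pi$ furnished by the hypothesis.
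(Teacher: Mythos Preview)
Your proposal is correct and follows essentially the same approach as the paper: both arguments invoke the $\implies$ direction of \autoref{prop:cond1} (which does not require $\Pi = A P \Lambda$) to obtain $\alpha(s) > 0$ and the explicit formula $\Pi(\rho,\sigma) = \frac{1}{\alpha(s)}\sum_{r\in\rho}\alpha(r)P(r,s)$ from \eqref{eq:tau_zero}, and then use the averaging computation from the $\impliedby$ part to identify this value with $(AP\Lambda)(\rho,\sigma)$. The only cosmetic difference is that the paper phrases the last step as ``$\Pi$ is uniquely determined, and $AP\Lambda$ also attains $0$ by the reverse direction, hence they coincide,'' whereas you carry out the averaging directly on the given $\Pi$; the underlying computation is identical.
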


\begin{proof}
  This actually already follows from what we have proved in \autoref{prop:cond1}:
  if there is some $\Pi$ such that $\norm{\Pi A - A P}_\infty = 0$,
  then, by \autoref{prop:cond1}, it follows that
  \begin{align}
    \label{eq:min0achieved}
    \begin{gathered}
      \forall s \in S: \alpha(s) > 0 \textrm{ and } \\
      \forall s, s' \in S \textrm{ s.t. } \omega(s) = \omega(s'): \forall \rho \in \Omega:
      \frac{1}{\alpha(s)}\sum_{r \in \rho} \alpha(r) P(r,s)
      = \frac{1}{\alpha(s')}\sum_{r \in \rho} \alpha(r) P(r,s')
    \end{gathered}
  \end{align}
  Here, we used
  that the $\implies$ direction in \autoref{prop:cond1} does not need
  the assumption $\Pi = A P \Lambda$ but holds for general $\Pi$. But in
  fact, if we take a closer look at \eqref{eq:tau_zero} in the proof of
  \autoref{prop:cond1}, we see that furthermore,
  \begin{align*}
    \norm{\Pi A - A P}_\infty = 0 \implies \forall \rho, \sigma \in \Omega:
    \forall s \in \sigma: \Pi(\rho, \sigma) = \frac{1}{\alpha(s)}\sum_{r \in \rho} \alpha(r) P(r,s)
  \end{align*}
  As $\alpha(s) > 0$ by \eqref{eq:min0achieved}, $\Pi$ is therefore uniquely determined
  if $\norm{\Pi A - A P}_\infty = 0$.
  We now apply the reverse direction $\impliedby$ of \autoref{prop:cond1} with an a priori potentially different
  matrix $\Pi$, namely $\Pi = A P \Lambda$. \autoref{prop:cond1} tells
  us that if \eqref{eq:min0achieved} holds and if we choose $\Pi = A P \Lambda$,
  then $\norm{\Pi A - A P}_\infty = 0$ which concludes the proof by showing
  that our unique $\Pi$ is indeed the matrix $A P \Lambda$. Note again
  that this proof can also be applied to CTMCs.
\end{proof}

\begin{remark}
  We can restate \autoref{prop:cond1} slightly differently: if we set $\Pi = A P \Lambda$,
  then $\norm{\Pi A - A P}_\infty = 0$ for DTMCs
  if, and only if,
  \begin{align*}
    \frac{\alpha(s)}{\alpha(s')}
    = \frac{\sum_{r \in \rho} \alpha(r) P(r,s)}{\sum_{r \in \rho} \alpha(r) P(r,s')}
  \end{align*}
  for any two states $s$ and $s'$ in the same aggregate $\sigma$ and any aggregate
  $\rho$ such that the probability to go from $\rho$ to $s'$ is positive.
  
  Note that for CTMCs, $\sum_{r \in \rho} \alpha(r) Q(r,s)$ can be negative
  if $s \in \rho$. However, if, in addition, $s' \in \rho$ and if $\norm{\Theta A - A Q}_\infty = 0$,
  then $\sum_{r \in \rho} \alpha(r) Q(r,s')$ must be negative as well by \autoref{prop:cond1},
  because $\alpha(s) > 0$ and $\alpha(s') > 0$. Usually, $\sum_{r \in \rho} \alpha(r) Q(r,s)$
  can be interpreted as the rate at which probability mass is flowing from $\rho$ to $s$,
  under the assumption that $s \notin \rho$ and that the mass within $\rho$ is distributed
  according to $\alpha$. Now, if $s \in \rho$, this is no longer the case.
  Instead,
  \begin{align*}
    \sum_{r \in \rho} \alpha(r) Q(r,s) = \underbrace{\alpha(s) Q(s,s)}_{\textrm{negative of the rate at which mass is exiting }s} + \underbrace{\sum_{\substack{r \in \rho\\r \neq s}} \alpha(r) Q(r,s)}_{\textrm{rate at which mass is flowing from }\rho\setminus\{s\}\textrm{ to }s}
  \end{align*}
  again under the assumption that the mass within $\rho$ is distributed
  according to $\alpha$.
\end{remark}

With the help of \autoref{prop:cond1}, we can now apply other results
from \cite{mcaggreactnet} (we still assume that we aggregate via weighted
state space partitioning and that $\Pi = A P \Lambda$, respectively $\Theta = A Q \Lambda$):
\begin{itemize}
  \item By \cite[Theorem 7]{mcaggreactnet}, if $\norm{\Pi A - A P}_\infty = 0$
  and if the DTMC is aperiodic and irreducible, then we have $\norm{\pi_k^{\transp} - p_k^{\transp}\Lambda}_1 \to 0$
  for $k \to \infty$ (i.e.~the approximate aggregate probabilities $\pi_k(\sigma)$
  converge to the exact aggregate probability $\Prb{X_k \in \sigma}$) and
  $\frac{p_k(s)}{\pi_k(\omega(s))} \to \alpha(s)$. For periodic chains,
  we have to consider $\frac{1}{k}\sum_{i=1}^k p_i(s)$ instead of $p_k(s)$,
  see \cite[Theorem 6]{mcaggreactnet}. In addition, the
  stationary distribution $p$ of the irreducible DTMC satisfies $p^{\transp} = \pi^{\transp} A$
  where $\pi$ is the stationary distribution of the aggregated
  chain in both periodic and aperiodic cases, which also follows from \autoref{cor:stat_bnd}.
  \item By \cite[Theorem 11]{mcaggreactnet}, if $\norm{\Theta A - A Q}_\infty = 0$
  and if the CTMC is irreducible, then we have $\norm{\pi_t^{\transp} - p_t^{\transp}\Lambda}_1 \to 0$
  for $t \to \infty$ and
  $\frac{p_t(s)}{\pi_t(\omega(s))} \to \alpha(s)$. In addition, the
  stationary distribution $p$ of the CTMC satisfies $p^{\transp} = \pi^{\transp} A$
  where $\pi$ is the stationary distribution of the aggregated
  chain, which again follows from \autoref{cor:stat_bnd} as well.
\end{itemize}

The notion of exact and dynamic-exact aggregation has thus already appeared
in the literature, but only in the setting of weighted state space partitioning,
and \cite{mcaggreactnet} is one of the few papers considering this concept.
Most papers consider different types of lumpability and aggregatability, which
often imply exactness or dynamic-exactness.

\subsection{Dynamic-exactness compared to other lumpability concepts}

The following definition was given in \cite[Definition 1]{exactordlump}:
\begin{definition}
  \label{def:ordlump}
  A partition $\Omega = \{\Omega_1, \ldots, \Omega_m\}$ of the state space
  of a DTMC is called \textbf{ordinarily lumpable} if
  \begin{align}
    \label{eq:ordinarylump}
    \forall r, r' \in S \textrm{ s.t.~} \omega(r) = \omega(r'):
    \forall \sigma \in \Omega: \qquad
    \sum_{s \in \sigma} P(r, s) = \sum_{s \in \sigma} P(r', s)
  \end{align}
  That is, for any two states in the same aggregate, the summed outgoing
  probabilities to any other aggregate must be identical. For CTMCs, a
  partition is called ordinarily lumpable if \eqref{eq:ordinarylump}
  holds with $P$ replaced by $Q$.
\end{definition}

Equivalently, a DTMC is ordinarily lumpable w.r.t.~a given partition if
there exists a (stochastic) matrix $\Pi \in \bbR^{m \times m}$ (which is then uniquely
defined) such that $\Lambda \Pi = P \Lambda$.
For an ordinarily lumpable partition, it is easy to show that
$\pi_k(\sigma) = \sum_{s \in \sigma} \widetilde{p}_k(s) = \sum_{s \in \sigma} p_k(s)$
if $\Pi = AP\Lambda$, for any $\sigma \in \Omega$, all $k$,
and any initial distribution $p_0$
(and independent of the choice of $\alpha$ as long as the distributions
$\alpha$ are consistent with $\Omega$), i.e.~the probability $\Prb{X_k \in \sigma}$
is exactly equal to $\pi_k(\sigma)$. A simple induction suffices to prove this,
see \cite[Theorem 5]{exactordlump}.
The same holds in the continuous-time case.

Note that ordinary lumpability is called strong lumpability in \cite{finmc}, which considers
only DTMCs. As before, define the process $Y_k$ by $Y_k = \sigma \in \Omega \iff X_k \in \Omega$.
By \cite[Theorem 6.3.2]{finmc}, ordinary lumpability is equivalent to
the following: for every initial distribution $p_0$, $Y_k$ is a Markov chain
(whose transition probabilities do not depend on the choice of $p_0$).

\cite[Definition 1]{exactordlump} also defines exact lumpability:
\begin{definition}
  \label{def:exlump}
  A partition $\Omega = \{\Omega_1, \ldots, \Omega_m\}$ of the state space
  of a DTMC is called \textbf{exactly lumpable} if
  \begin{align}
    \label{eq:exactlump}
    \forall s, s' \in S \textrm{ s.t.~} \omega(s) = \omega(s'):
    \forall \rho \in \Omega: \qquad
    \sum_{r \in \rho} P(r, s) = \sum_{r \in \rho} P(r, s')
  \end{align}
  That is, for any two states in the same aggregate, the summed incoming
  probabilities from any other aggregate must be identical. For CTMCs, a
  partition is called exactly lumpable if \eqref{eq:exactlump}
  holds with $P$ replaced by $Q$.

  Furthermore, a partition $\Omega$ is called \textbf{strictly lumpable}
  if it is both ordinarily and exactly lumpable.
\end{definition}

We can again give an equivalent definition in terms of matrix products:
a DTMC is exactly lumpable w.r.t.~a given partition if
there exists a (not necessarily stochastic) matrix $\widetilde{\Pi} \in \bbR^{m \times m}$ (which is then uniquely
defined) such that $\widetilde{\Pi} \Lambda^{\transp} = \Lambda^{\transp} P$. Note that
we could also use the following defining equality instead: exact lumpability
is equivalent to the existence of a (unique stochastic) matrix $\Pi \in \bbR^{m \times m}$
such that $\Pi A = A P$ where the rows of $A$ correspond to uniform
distributions over the respective aggregates (see \autoref{prop:exlump_impl_dynex}).

If a partition is ordinarily (exactly) lumpable for a CTMC, then the partition
is also ordinarily (exactly) lumpable for any uniformisation of the CTMC, regardless
of the uniformisation rate, see the remarks after \cite[Definition 1]{exactordlump}.

\cite[Definition 2.1]{probalgmcagg} also defines lumpability. Note that this definition
of lumpability agrees with the definition of ordinary lumpability given above.
\cite[Definition 2.1]{probalgmcagg} further defines deflatability and aggregatability:

\begin{definition}
  \label{def:deflat}
  A partition $\Omega = \{\Omega_1, \ldots, \Omega_m\}$ of the state space
  of a DTMC, together with distributions $\alpha_\sigma \in \bbR^n$ with support
  on $\sigma \in \Omega$, is called \textbf{deflatable} if
  \begin{align}
    \label{eq:deflatable}
    \forall r \in S: \forall s \in S: \qquad
    P(r, s) = \alpha(s) \cdot \sum_{s' \in \omega(s)} P(r, s')
  \end{align}
  In words: the probability to go from $r$ to $s$ only depends on $r$ and the
  aggregated state $\omega(s)$ as well as a factor which depends on $s$, but not on $r$.
  Another description: after a jump into a partition element $\sigma$,
  the particular target state can be chosen according to the probability measure
  $\alpha_\sigma$, independently of where the jump started.

  The partition $\Omega$, together with distributions $\alpha$, is further
  called \textbf{aggregatable} if it is deflatable and if $\Omega$ is ordinarily lumpable.
\end{definition}

\autoref{def:deflat} cannot be extended to CTMCs easily.

\begin{proposition}
  \label{prop:exlump_impl_dynex}
  Assume a partition $\Omega$ is given for an irreducible DTMC or CTMC.
  When setting $\Pi = AP\Lambda$ (respectively $\Theta = AQ\Lambda$) and
  $\alpha(s) = \frac{1}{\abs{\omega(s)}}$, then:
  \begin{align*}
    \norm{\Pi A - A P}_\infty = 0 \;\; \iff \;\; \Omega \textrm{ is exactly lumpable}
  \end{align*}
\end{proposition}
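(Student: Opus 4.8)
The plan is to obtain this proposition as a direct specialization of \autoref{prop:cond1}, which has already done the substantive work of translating $\norm{\Pi A - A P}_\infty = 0$ into a pointwise condition on the weights $\alpha$. Since we set $\Pi = A P \Lambda$ here, the full equivalence of \autoref{prop:cond1} is available in both directions. First I would invoke that proposition to rewrite $\norm{\Pi A - A P}_\infty = 0$ as the conjunction of (a) $\alpha(s) > 0$ for all $s \in S$, and (b) the ratio equality $\frac{1}{\alpha(s)}\sum_{r \in \rho} \alpha(r) P(r,s) = \frac{1}{\alpha(s')}\sum_{r \in \rho} \alpha(r) P(r,s')$ for all $s, s'$ with $\omega(s) = \omega(s')$ and all $\rho \in \Omega$. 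Under the uniform choice $\alpha(s) = \frac{1}{\abs{\omega(s)}}$, condition (a) holds automatically, since every aggregate is a nonempty finite set, so $\abs{\omega(s)} \geq 1$ and $\alpha(s) > 0$; this disposes of (a) entirely.

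The core of the argument is then to simplify condition (b) under the uniform weights. Fixing $\rho, \sigma \in \Omega$ and two states $s, s' \in \sigma$, every $r \in \rho$ satisfies $\alpha(r) = \frac{1}{\abs{\rho}}$, so this common factor can be pulled out of both sums. Moreover, since $s$ and $s'$ lie in the same aggregate $\sigma$, we have $\alpha(s) = \alpha(s') = \frac{1}{\abs{\sigma}}$, so the prefactors $\frac{1}{\alpha(s)}$ and $\frac{1}{\alpha(s')}$ are equal. After cancelling the common $\frac{1}{\abs{\rho}}$ and the equal prefactors, condition (b) collapses precisely to $\sum_{r \in \rho} P(r,s) = \sum_{r \in \rho} P(r,s')$, which is exactly the defining condition of exact lumpability in \autoref{def:exlump}.

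Because the quantifiers in (b) (over $s, s'$ sharing an aggregate and over all $\rho \in \Omega$) match those of \autoref{def:exlump} verbatim, the two statements are equivalent, which completes the proof; the CTMC case follows identically with $P$ replaced by $Q$ and $\Pi$ by $\Theta$, as \autoref{prop:cond1} is stated for both. I do not expect any genuine obstacle here beyond careful bookkeeping of the cancellations: the analytical content was already carried by \autoref{prop:cond1}, and this proposition is essentially its uniform-weight corollary, with the sole new ingredient being the trivial observation that uniform weights are strictly positive.
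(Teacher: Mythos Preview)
Your proposal is correct and matches the paper's proof essentially line for line: both invoke \autoref{prop:cond1}, substitute the uniform weights $\alpha(s) = \frac{1}{\abs{\omega(s)}}$, cancel the common factors $\frac{1}{\abs{\rho}}$ and $\frac{1}{\abs{\sigma}}$, and recognize the resulting equality as the defining condition of exact lumpability. Your explicit remark that the positivity condition $\alpha(s) > 0$ is automatic under uniform weights is a detail the paper leaves implicit, but otherwise the arguments are identical.
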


\begin{proof}
  By \autoref{prop:cond1},
  $\norm{\Pi A - A P}_\infty = 0$ is equivalent to (in the following, we use that $\alpha(s) = \frac{1}{\abs{\omega(s)}}$)
  \begin{align*}
    \overbrace{\forall s, s' \in S \textrm{ s.t.~} \omega(s) = \omega(s'): \forall \rho \in \Omega:}^{\forall \ldots}
    &\;\;
    \alpha(s') \sum_{r \in \rho} \alpha(r) P(r, s) = \alpha(s) \sum_{r \in \rho} \alpha(r) P(r, s') \\
    \hskip-0.5em\iff \; \forall \ldots:&
    \;\;
    \frac{1}{\abs{\omega(s')}} \sum_{r \in \rho} \frac{1}{\abs{\rho}} P(r, s) = \frac{1}{\abs{\omega(s)}} \sum_{r \in \rho} \frac{1}{\abs{\rho}} P(r, s') \\
    \hskip-0.5em\iff \; \forall \ldots:&
    \;\;
    \sum_{r \in \rho} P(r, s) = \sum_{r \in \rho} P(r, s') \\
    \hskip-0.5em\iff &\;\; \Omega \textrm{ is exactly lumpable}
  \end{align*}
  The same calculation holds for CTMCs.
\end{proof}

\begin{proposition}
  \label{prop:deflat_impl_dynex}
  Consider an irreducible DTMC, a partition $\Omega$ of its state space
  and distributions $\alpha_{\sigma} \in \bbR^n$ with support on $\sigma \in \Omega$.
  If $\Omega$ and the distributions $\alpha$ are deflatable, then $\norm{\Pi A - A P}_\infty = 0$.
\end{proposition}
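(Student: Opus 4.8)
The plan is to reduce everything to the characterisation of $\norm{\Pi A - A P}_\infty = 0$ already established in \autoref{prop:cond1}. Since we are in the setting of weighted state space partitioning with $\Pi = A P \Lambda$, \autoref{prop:cond1} tells us that $\norm{\Pi A - A P}_\infty = 0$ holds precisely when (a) $\alpha(s) > 0$ for every $s \in S$, and (b) for all $s, s'$ with $\omega(s) = \omega(s')$ and all $\rho \in \Omega$ the two ratios $\frac{1}{\alpha(s)}\sum_{r \in \rho}\alpha(r) P(r,s)$ and $\frac{1}{\alpha(s')}\sum_{r \in \rho}\alpha(r) P(r,s')$ coincide. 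So the whole proof consists of deriving (a) and (b) from the deflatability equation \eqref{eq:deflatable}.

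For (a), I would argue by contradiction using irreducibility, in the same spirit as the positivity argument in the proof of \autoref{prop:cond1}. Suppose $\alpha(s_0) = 0$ for some $s_0$. Deflatability \eqref{eq:deflatable} then gives $P(r, s_0) = \alpha(s_0) \cdot \sum_{s' \in \omega(s_0)} P(r, s') = 0$ for every $r \in S$, so $s_0$ has no incoming transitions. This contradicts irreducibility, since $s_0$ must be reachable from some other state and hence must have at least one predecessor $r$ with $P(r, s_0) > 0$. Therefore $\alpha(s) > 0$ for all $s$. This is the only place where the irreducibility hypothesis enters, and I expect this positivity step to be the main (though minor) obstacle, as it is precisely what forces the assumption of irreducibility into the statement.

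For (b), I would substitute \eqref{eq:deflatable} directly into the weighted incoming sum. For any $s$ and any $\rho \in \Omega$,
\begin{align*}
  \sum_{r \in \rho} \alpha(r) P(r,s)
  = \sum_{r \in \rho} \alpha(r)\, \alpha(s) \sum_{s' \in \omega(s)} P(r, s')
  = \alpha(s) \sum_{r \in \rho} \alpha(r) \sum_{s' \in \omega(s)} P(r, s').
\end{align*}
Dividing by $\alpha(s) > 0$, the ratio $\frac{1}{\alpha(s)}\sum_{r \in \rho}\alpha(r)P(r,s)$ equals $\sum_{r \in \rho}\alpha(r)\sum_{s'\in\omega(s)}P(r,s')$, an expression that depends only on $\rho$ and on the aggregate $\omega(s)$, not on $s$ itself. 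Consequently, whenever $\omega(s) = \omega(s')$ the two ratios agree, establishing (b). Combining (a) and (b) and invoking the $\impliedby$ direction of \autoref{prop:cond1} (which is where $\Pi = A P \Lambda$ is needed) yields $\norm{\Pi A - A P}_\infty = 0$.

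Finally, I would note a shortcut that sidesteps the positivity discussion entirely: one can instead verify the last, division-free line of \eqref{eq:tau_zero}, namely $\alpha(s)\,\Pi(\rho,\sigma) = \sum_{r\in\rho}\alpha(r)P(r,s)$ for all $\rho,\sigma \in \Omega$ and $s \in \sigma$. Expanding $\Pi(\rho,\sigma) = (A P \Lambda)(\rho,\sigma) = \sum_{r\in\rho}\alpha(r)\sum_{s''\in\sigma}P(r,s'')$ and again substituting \eqref{eq:deflatable} for $P(r,s)$ makes both sides literally equal, without ever dividing by $\alpha(s)$; this version does not even require irreducibility, but since it is part of the hypothesis the route through \autoref{prop:cond1} is equally acceptable.
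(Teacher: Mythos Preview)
Your proposal is correct and follows essentially the same route as the paper: reduce to the characterisation in \autoref{prop:cond1} and substitute the deflatability identity \eqref{eq:deflatable} to see that the condition holds. The paper works directly with the cross-multiplied form $\alpha(s')\sum_{r\in\rho}\alpha(r)P(r,s) = \alpha(s)\sum_{r\in\rho}\alpha(r)P(r,s')$ and never explicitly verifies $\alpha(s)>0$, whereas you supply that positivity argument from irreducibility; your final shortcut via the last line of \eqref{eq:tau_zero} is in fact the cleanest way to make the paper's cross-multiplied manipulation rigorous.
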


\begin{proof}
  Again by \autoref{prop:cond1},
  $\norm{\Pi A - A P}_\infty = 0$ is equivalent to
  \begin{align*}
    &\overbrace{\forall s, s' \in S \textrm{ s.t.~} \omega(s) = \omega(s'):
    \forall \rho \in \Omega:}^{\forall \ldots} \;
    \alpha(s') \sum_{r \in \rho} \alpha(r) P(r, s) = \alpha(s) \sum_{r \in \rho} \alpha(r) P(r, s') \\
    \overset{\textrm{deflatability}}{\iff} \; &\forall \ldots: \;\;
    \alpha(s') \sum_{r \in \rho} \alpha(r) \alpha(s) \sum_{\widehat{s} \in \omega(s)} P(r, \widehat{s}) = \alpha(s) \sum_{r \in \rho} \alpha(r) \alpha(s') \sum_{\widehat{s} \in \omega(s')} P(r, \widehat{s}) \\
    \iff \; &\forall \ldots: \;\;
    \alpha(s')\alpha(s) \sum_{r \in \rho} \alpha(r) \sum_{\widehat{s} \in \omega(s)} P(r, \widehat{s}) = \alpha(s)\alpha(s') \sum_{r \in \rho} \alpha(r) \sum_{\widehat{s} \in \omega(s')} P(r, \widehat{s})
  \end{align*}
  The statement in the last line ist true since $\omega(s) = \omega(s')$ by
  the assumption on $s$ and $s'$.
\end{proof}

We next show that none of the lumpability concepts above are necessary conditions
for $\norm{\Pi A - A P}_\infty = 0$. Hence, except for \cite{mcaggreactnet},
a large part of the literature has treated stricter than necessary
conditions in order for dynamic-exact aggregation to be possible.
None of the definitions of ordinary and exact lumpability as
well as deflatability take into account the initial distribution, so none of
these conditions are sufficient for an exact aggregation. Exact lumpability and deflatability
only imply dynamic-exactness. At the same time, all three concepts (ordinary \& exact lumpability, deflatability) are still useful
since they are easier to check computationally, and can thus be relevant for practical applications.
This will be discussed in more detail in the next section.

\begin{proposition}
  \label{prop:lump_nimpl_dynex}
  There are partitions $\Omega$ of the state space of a DTMC and probability
  distributions $\alpha_\sigma$ with support on $\sigma \in \Omega$
  which are dynamic-exact (when $\Pi = AP\Lambda$),
  but where $\Omega$ is neither ordinary lumpable, nor exactly lumpable, nor are $\Omega$
  and the distributions $\alpha$ deflatable.
\end{proposition}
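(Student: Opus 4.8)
The statement is an existence claim, so the plan is to exhibit one explicit small DTMC together with a partition $\Omega$ and weights $\alpha$ that is dynamic-exact yet fails all three notions, and then to verify each property by a short direct computation. The central tool will be \autoref{prop:cond1}, which (under $\Pi = AP\Lambda$, and for an irreducible chain) reduces dynamic-exactness to checking that $\alpha(s) > 0$ everywhere and that the $\alpha$-weighted incoming ratios $\frac{1}{\alpha(s)}\sum_{r \in \rho}\alpha(r)P(r,s)$ agree for all states $s$ sharing an aggregate and all source aggregates $\rho$. The guiding observation is that this ratio condition is strictly weaker than each of the three target conditions, so there should be room to satisfy it while violating the others.

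First I would fix the partition $\Omega_1 = \{1,2\}$, $\Omega_2 = \{3\}$ with a single nontrivial aggregate of size two, and I would deliberately take $\alpha$ \emph{non-uniform} on $\Omega_1$, say $\alpha(1) = \tfrac13$, $\alpha(2) = \tfrac23$, $\alpha(3) = 1$; uniform weights are excluded from the outset, since by \autoref{prop:exlump_impl_dynex} they would tie dynamic-exactness to exact lumpability and thus defeat the purpose. The only nontrivial instances of the ratio condition are then the pair $s=1,s'=2$ tested against $\rho=\Omega_1$ and against $\rho=\Omega_2$. The condition against the singleton $\rho=\{3\}$ forces row $3$ to split into $\{1,2\}$ in the ratio $\alpha(1):\alpha(2)$, whereas the condition against $\rho=\Omega_1$ is a \emph{single} linear equation coupling rows $1$ and $2$ jointly, leaving each of them individually free. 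This asymmetry is what I would exploit: I keep row $1$'s split into $\{1,2\}$ \emph{not} proportional to $\alpha$ (this breaks deflatability in \autoref{def:deflat}), solve the one coupling equation for row $2$, and then tune the remaining free entry so that the outgoing mass of rows $1$ and $2$ into $\Omega_2$ differs (breaking \autoref{def:ordlump}); note that exact lumpability then fails automatically, since the forced proportional row $3$ together with $\alpha(1)\neq\alpha(2)$ yields $P(3,1)\neq P(3,2)$, which violates \autoref{def:exlump} for $\rho=\{3\}$.

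Concretely I expect the transition matrix with rows $(\tfrac12,0,\tfrac12)$, $(\tfrac18,\tfrac34,\tfrac18)$ and $(\tfrac14,\tfrac12,\tfrac14)$ to work, and the verification then splits into four short checks: stochasticity and irreducibility (so that \autoref{prop:cond1} applies); the two ratio equalities, where both sides evaluate to $\tfrac34$; a single witnessing pair for each of \autoref{def:ordlump} and \autoref{def:exlump}; and one entry, for instance $P(1,1)=\tfrac12$ against $\alpha(1)\bigl(P(1,1)+P(1,2)\bigr)=\tfrac16$, where the deflatability factorization fails. The main obstacle is not any individual calculation but the \emph{simultaneity} of the three violations: each time one breaks one condition it is easy to accidentally re-establish another -- giving the two rows of $\Omega_1$ equal aggregated row sums quietly restores ordinary lumpability, and making every row split proportionally to $\alpha$ quietly restores deflatability. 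The genuine work is therefore in choosing the free parameters so that all three failures coexist with the ratio condition, which is precisely why both a non-uniform $\alpha$ and a row that is non-proportional to $\alpha$ are indispensable.
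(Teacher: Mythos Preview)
Your proposal is correct and follows essentially the same approach as the paper: exhibit an explicit $3$-state DTMC with one nontrivial aggregate of size two and non-uniform weights, then verify dynamic-exactness and the failure of each of the three conditions by direct computation. The paper's example uses the partition $\{\{1\},\{2,3\}\}$ with $\alpha = (1,\tfrac14,\tfrac34)$ and checks dynamic-exactness by computing $\Pi A$ and $AP$ explicitly, whereas you use $\{\{1,2\},\{3\}\}$ with $\alpha = (\tfrac13,\tfrac23,1)$ and invoke the ratio criterion of \autoref{prop:cond1}; both routes are equally valid, and your accompanying discussion of \emph{why} such an example must have non-uniform $\alpha$ and a non-proportional row is a useful addition that the paper's proof does not make explicit.
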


\begin{proof}
  We consider the state space $S = \{1, 2, 3\}$, the aggregation $\Omega = \{\{1\}, \{2,3\}\}$
  and $\alpha(1) = 1, \alpha(2) = \frac{1}{4}, \alpha(3) = \frac{3}{4}$ as well as the
  DTMC given by the following transition matrix:
  \begin{align*}
    P &= \begin{pmatrix}[1.2]
      0 & \frac{1}{4} & \frac{3}{4} \\
      0 & \frac{1}{2} & \frac{1}{2} \\
      \frac{4}{9} & \frac{1}{18} & \frac{1}{2}
    \end{pmatrix} \\
    \implies \qquad \Pi &= \begin{pmatrix}
      0 & 1 \\
      \frac{1}{3} & \frac{2}{3}
    \end{pmatrix},
    \qquad
    A = \begin{pmatrix}
      1 & 0 & 0 \\
      0 & \frac{1}{4} & \frac{3}{4}
    \end{pmatrix},
    \qquad
    \Lambda = \begin{pmatrix}
      1 & 0 \\
      0 & 1 \\
      0 & 1
    \end{pmatrix}
  \end{align*}
  It is easy to check that $\Pi A = A P$ holds, i.e.~this aggregation
  is dynamic-exact and $\norm{\Pi A - A P}_\infty = 0$. Indeed, we
  have
  \begin{align*}
    \Pi A = \begin{pmatrix}[1.2]
      0 & \frac{1}{4} & \frac{3}{4} \\
      \frac{1}{3} & \frac{1}{6} & \frac{1}{2}
    \end{pmatrix} = A P
  \end{align*}
  We now show that none of the stated properties hold for $\Omega$ and $\alpha$:
  \begin{itemize}
    \item ordinary lumpability: since $\omega(2) = \omega(3)$, by \eqref{eq:ordinarylump}
    in \autoref{def:ordlump}, we would need
    \begin{align*}
      \underbrace{P(2, 2) + P(2, 3)}_{= 1} = \sum_{s \in \{2, 3\}} P(2, s) = \sum_{s \in \{2, 3\}} P(3, s) = \underbrace{P(3, 2) + P(3, 3)}_{=\frac{5}{9}}
    \end{align*}
    which is clearly not true.
    \item exact lumpability: since $\omega(2) = \omega(3)$, by \eqref{eq:exactlump}
    in \autoref{def:exlump}, we would need
    \begin{align*}
      \underbrace{P(2, 2) + P(3, 2)}_{= \frac{5}{9}} = \sum_{r \in \{2, 3\}} P(r, 2) = \sum_{r \in \{2, 3\}} P(r, 3) = \underbrace{P(2, 3) + P(3, 3)}_{=1}
    \end{align*}
    which is clearly not true.
    \item deflatability: since $\omega(2) = \omega(3)$, by \eqref{eq:deflatable}
    in \autoref{def:deflat}, we would need
    \begin{align*}
      \frac{1}{2} = P(2, 2) = \alpha(2) \sum_{s \in \{2, 3\}} P(2, s) = \alpha(2) \cdot 1 \implies \alpha(2) = \frac{1}{2} \\
      \frac{1}{18} = P(3, 2) = \alpha(2) \sum_{s \in \{2, 3\}} P(3, s) = \alpha(2) \cdot \frac{5}{9} \implies \alpha(2) = \frac{1}{10} 
    \end{align*}
    so the given $\Omega$ and $\alpha$ are not deflatable,
    and there is even no other choice of $\alpha$ such that $\Omega$ and $\alpha$
    would be deflatable.
  \end{itemize}
  In fact, there is no aggregation $\Omega$ with $\Omega \neq \{S\}$ and
  $\Omega \neq \{\{1\}, \{2\}, \{3\}\}$ (the trivial partitions) which is
  ordinary or exactly lumpable, or for which deflatable $\alpha$ distributions
  exist.
\end{proof}

We have now seen some concepts which imply dynamic-exactness but not
vice versa. However, as mentioned already briefly, dynamic-exactness does
imply a type of lumpability: weak lumpability.

\begin{definition}
  \label{def:weaklump}
  Consider a DTMC or CTMC and a partition $\Omega$ of its state space.
  Denote the state of the chain at time $k$ or $t$ by $X_k$ or $X_t$, respectively.
  Further define the processes $Y_k := \omega(X_k)$ and $Y_t := \omega(X_t)$.
  The Markov chain is said to be \textbf{weakly lumpable} w.r.t.~$\Omega$
  if there exists an initial distribution $p_0$ for $X_k$ or $X_t$ such
  that the process $Y_k$ or $Y_t$ is itself a (time-homogeneous) Markov chain on $\Omega$.
\end{definition}

\begin{proposition}
  \label{prop:dynex_impl_weaklump}
  Consider a DTMC or CTMC, a partition $\Omega$ of its state space and
  distributions $\alpha_\sigma$ with support on $\sigma \in \Omega$ such
  that $\norm{\Pi A - A P}_\infty = 0$ or $\norm{\Theta A - A Q}_\infty = 0$
  (where $\Pi = A P \Lambda, \Theta = A Q \Lambda$). Then, the Markov chain
  is weakly lumpable w.r.t.~$\Omega$, and any initial distribution which
  is compatible with the $\alpha$ distributions will result in the process
  $Y_k$ or $Y_t$ being a Markov chain.
\end{proposition}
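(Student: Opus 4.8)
The plan is to exhibit a single structural invariant and show that dynamic-exactness forces it to be preserved by the dynamics: conditioned on the observed history of aggregates up to the current time, the state $X$ is always distributed according to $\alpha$ \emph{within} the aggregate it currently occupies. Once this invariant is in place, the one-step (or elapsed-time) transition probability of the process $Y$ will depend only on the current aggregate and the target aggregate, which is exactly the (time-homogeneous) Markov property demanded by \autoref{def:weaklump}. The natural choice of witnessing initial distribution is then any $p_0$ compatible with $\alpha$, which is precisely the base case of the invariant.

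First I would record the algebraic form of the hypothesis that drives everything. From the equivalences in \eqref{eq:tau_zero} (proof of \autoref{prop:cond1}), the assumption $\Pi A = A P$ with $\Pi = A P \Lambda$ is equivalent to
\begin{align*}
  \sum_{r \in \rho} \alpha(r) P(r, s) = \alpha(s)\,\Pi(\rho, \sigma) \qquad \text{for all } \rho, \sigma \in \Omega \text{ and all } s \in \sigma .
\end{align*}
The crucial feature is the factorization: as $s$ ranges over a fixed aggregate $\sigma$, the incoming mass $\sum_{r \in \rho}\alpha(r)P(r,s)$ is $\alpha(s)$ times a constant $\Pi(\rho,\sigma)$ independent of the particular $s$. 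For CTMCs I would replace $P$ by $e^{Qt}$: from $\Theta A = A Q$ one gets $A e^{Qt} = e^{\Theta t} A$ exactly as $A P^k = \Pi^k A$ follows from $\Pi A = A P$ in \eqref{eq:induc_exact} (and as already used in \autoref{prop:dynex_alwaysiniterr}), and reading off the $(\rho,s)$-entry yields $\sum_{r \in \rho}\alpha(r)(e^{Qt})(r,s) = \alpha(s)\,(e^{\Theta t})(\rho,\sigma)$ for $s \in \sigma$, the same factorization.

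Next I would prove the invariant by induction on the finitely many time points entering a finite-dimensional Markov property; for CTMCs this reduces the continuous history to checking $\Prb{Y_{t_n} = \rho_n \mid Y_{t_0} = \rho_0, \ldots, Y_{t_{n-1}} = \rho_{n-1}}$ for arbitrary $t_0 < \cdots < t_n$, which suffices since $\Omega$ is finite. The base case is the compatibility of $p_0$ (\autoref{def:compat}): conditioned on $Y_0 = \rho$, the law of $X_0$ on $\rho$ is $\alpha_\rho$. For the inductive step, assume $X$ is $\alpha$-distributed on the current aggregate $\rho$ given the history; propagating forward produces the subprobability $s \mapsto \sum_{r \in \rho}\alpha(r)P(r,s)$ (resp.\ with $e^{Q\Delta}$). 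Restricting to the next observed aggregate $\sigma$ and renormalizing, the factorization above lets the common constant $\Pi(\rho,\sigma)$ (resp.\ $(e^{\Theta\Delta})(\rho,\sigma)$) cancel, and since $\sum_{s \in \sigma}\alpha(s) = 1$ the renormalized conditional law is again $\alpha_\sigma$. We only ever condition on events of positive probability, so the normalizing constant is nonzero and no separate assumption $\alpha(s) > 0$ is needed. The same computation gives $\Prb{Y_{k+1} = \sigma \mid \text{history}} = \Pi(\rho,\sigma)$ (resp.\ the continuous-time analogue $(e^{\Theta\Delta})(\rho,\sigma)$), depending only on $\rho$, $\sigma$ and the elapsed time, so $Y$ is a time-homogeneous Markov chain and weak lumpability follows.

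I expect the main obstacle to be the continuous-time bookkeeping: justifying that the finite-dimensional Markov property is enough for $Y_t$ to be a Markov chain, and setting up the conditioning on the continuous past cleanly (the transition function $e^{\Theta\Delta}$ is a genuine stochastic semigroup because $\Theta = A Q \Lambda$ is a generator). The algebraic heart—the factorization forcing the renormalized within-aggregate law back to $\alpha$—is identical in both settings, so once the CTMC conditioning is arranged the remaining computation is routine.
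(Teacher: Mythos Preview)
Your proposal is correct and, in fact, more careful than the paper's own proof. For DTMCs the paper simply cites \cite[Theorem~6.4.4]{finmc}; for CTMCs it shows the identity $p_0^{\transp} e^{Qt}\Lambda = p_0^{\transp}\Lambda\, e^{\Theta t}$ for any $\alpha$-compatible $p_0$ (via $A Q^k=\Theta^k A$ and $A\Lambda=I$) and then asserts that this suffices for $Y_t$ to be Markov with generator $\Theta$. That identity only pins down one-time marginals of $Y$; the missing piece---that after conditioning on any finite $Y$-history the law of $X$ in the current aggregate is again $\alpha$, so the argument restarts---is exactly the invariant you isolate and prove inductively. Your route therefore gives a self-contained verification of the finite-dimensional Markov property in both discrete and continuous time, whereas the paper's argument relies on an external reference in the discrete case and glosses over the conditional step in the continuous case. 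The trade-off is that the paper's computation is shorter and highlights the algebraic identity $A e^{Qt}=e^{\Theta t}A$ directly, while your approach makes the probabilistic mechanism (preservation of the within-aggregate law) explicit.
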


\begin{proof}
  The discrete-time case was already proven in \cite[Theorem 6.4.4]{finmc}.
  
  We look at the continuous-time case. Let $p_0$ such that $p_0^{\transp} \Lambda A = p_0^{\transp}$
  (i.e.~$p_0$ is compatible with the $\alpha$ distributions). We claim that
  $Y_t$ is a Markov chain with generator $\Theta$ in this case. To prove
  this claim, it is sufficient to show that
  \begin{align}
    \label{eq:yt_gen_theta}
    \underbrace{p_0^{\transp} e^{Q t} \Lambda}_{\textrm{dist.~of }Y_t} = \underbrace{p_0^{\transp} \Lambda}_{\textrm{dist.~of }Y_0} \cdot \; e^{\Theta t}
  \end{align}
  since the above equation shows that the distribution of $Y_t$ can be calculated
  by applying the generator $\Theta$ to the initial distribution of $Y_0$. If this
  holds for arbitrary $t$, then we can conclude that $Y_t$ is indeed a
  Markov chain with generator $\Theta$. We proceed to prove that \eqref{eq:yt_gen_theta}
  does indeed hold. Note that we have
  that $A Q^k = A Q Q^{k-1} = \Theta A Q Q^{k-2} = \ldots = \Theta^k A$ by assumption since
  $\norm{\Theta A - A Q}_\infty = 0$.
  \begin{align*}
    p_0^{\transp} e^{Q t} \Lambda
    &= p_0^{\transp}\Lambda A \left(\sum_{k \geq 0} \frac{t^k Q^k}{k!}\right) \Lambda
    = \sum_{k \geq 0} \frac{t^k}{k!} \cdot p_0^{\transp}\Lambda AQ^k\Lambda
    = \sum_{k \geq 0} \frac{t^k}{k!} \cdot p_0^{\transp}\Lambda \Theta^k A\Lambda
    = p_0^{\transp}\Lambda \left(\sum_{k \geq 0} \frac{t^k \Theta^k}{k!}\right)
    = p_0^{\transp}\Lambda e^{\Theta t}
  \end{align*}
  where we used that $A \Lambda = I$.
\end{proof}

\begin{proposition}
  \label{prop:weaklump_nimpl_dynex}
  There exists a DTMC which is weakly lumpable w.r.t.~a partition $\Omega$,
  but no stochastic matrix $\Pi$ (not necessarily $\Pi = A P \Lambda$) and distributions $\alpha_\sigma$ with support on $\sigma \in \Omega$
  exist such that $\norm{\Pi A - A P}_\infty = 0$.
\end{proposition}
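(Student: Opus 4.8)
The statement is an existence claim, so the plan is to exhibit one explicit irreducible DTMC and one partition $\Omega$ that is weakly lumpable but admits no weights $\alpha$ (and hence, by \autoref{cor:error0_pi_APLambda}, no stochastic $\Pi$ whatsoever) with $\norm{\Pi A - A P}_\infty = 0$. The guiding observation is that weak lumpability is already implied by ordinary lumpability, which is a condition on the \emph{outgoing} summed probabilities of an aggregate, whereas $\norm{\Pi A - A P}_\infty = 0$ is, by \autoref{prop:cond1}, a condition on the \emph{weighted incoming} probabilities; these two requirements are logically independent, so I would build a chain satisfying the former while violating the latter. A convenient choice is the singleton-plus-pair partition $\Omega = \{\{1\}, \{2,3\}\}$ on $S = \{1,2,3\}$ together with
\[
  P = \begin{pmatrix} 0 & \tfrac{1}{2} & \tfrac{1}{2} \\ \tfrac{1}{2} & \tfrac{1}{2} & 0 \\ \tfrac{1}{2} & \tfrac{1}{4} & \tfrac{1}{4} \end{pmatrix}.
\]
Irreducibility is immediate, since $1 \to 2,3$, $2 \to 1$ and $3 \to 1$, so all states communicate.

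For weak lumpability I would first verify that $\Omega$ is ordinarily lumpable in the sense of \autoref{def:ordlump}: the only non-trivial pair is $\{2,3\}$, and indeed $P(2,1) = P(3,1) = \tfrac{1}{2}$, so the summed outgoing probabilities of states $2$ and $3$ into each aggregate coincide. By the equivalence recalled after \autoref{def:ordlump} (the strong-lumpability theorem of \cite{finmc}), the aggregated process $Y_k = \omega(X_k)$ is then a time-homogeneous Markov chain for \emph{every} initial distribution; in particular there is an initial distribution for which it is Markov, which is exactly weak lumpability in the sense of \autoref{def:weaklump}.

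To show that no admissible $\alpha$ exists, I would invoke \autoref{cor:error0_pi_APLambda}: as the chain is irreducible, any stochastic $\Pi$ with $\norm{\Pi A - A P}_\infty = 0$ must equal $A P \Lambda$, so it suffices to refute the equivalent characterisation of \autoref{prop:cond1} for every positive weight vector $\alpha$. The instance of that characterisation with $\rho = \{1\}$ reads $\alpha(3)\,\alpha(1)\,P(1,2) = \alpha(2)\,\alpha(1)\,P(1,3)$, and since $P(1,2), P(1,3) > 0$ this pins down the ratio $\alpha(2) : \alpha(3) = P(1,2) : P(1,3) = 1 : 1$. Substituting $\alpha(2) = \alpha(3)$ into the instance with $\rho = \{2,3\}$ then forces $P(2,2) + P(3,2) = P(2,3) + P(3,3)$, i.e.\ $\tfrac{3}{4} = \tfrac{1}{4}$, which is false. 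Hence the system has no positive solution, so no $\alpha$ and no $\Pi$ can achieve $\norm{\Pi A - A P}_\infty = 0$, which completes the counterexample.

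The point to get right is not any single hard computation but the \emph{independence} of the two conditions: ordinary lumpability (equal outgoing sums from states $2$ and $3$) must hold to supply weak lumpability, while the weighted-incoming condition of \autoref{prop:cond1} must fail for the \emph{unique} $\alpha$ that the singleton aggregate forces. Making the source aggregate a singleton is the key device, since it pins $\alpha$ down up to scale and thereby collapses the quantifier ``for all $\alpha$'' into a single numerical check; choosing the $\{2,3\}$-block of $P$ to have unequal column sums then breaks the remaining equation. I expect this reduction step — arguing that the singleton constraint determines $\alpha$ so that the second constraint can be contradicted outright — to be the crux, whereas verifying ordinary lumpability and irreducibility is routine.
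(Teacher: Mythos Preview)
Your proof is correct. Both you and the paper follow the same high-level plan---exhibit a 3-state ordinarily lumpable DTMC with a $2{+}1$ partition (ordinary lumpability supplies weak lumpability for free), then show that no admissible $\alpha$ makes the aggregation dynamic-exact---but the execution differs. The paper uses $\Omega=\{\{1,2\},\{3\}\}$ with a different matrix and argues the impossibility by writing out $\Pi A$ and $AP$ explicitly with unknowns $\Pi(i,j)$, $\alpha(1)$, $\alpha(2)$ and solving the resulting system entry-by-entry until a contradiction appears. Your route via \autoref{prop:cond1} is tidier: the singleton source aggregate $\rho=\{1\}$ immediately pins the ratio $\alpha(2):\alpha(3)$, and one further substitution into the $\rho=\{2,3\}$ instance finishes. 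Your argument in fact yields the slightly stronger conclusion that no matrix $\Pi$ whatsoever (not merely stochastic ones) can work, since the $\Longrightarrow$ direction of \autoref{prop:cond1} places no hypothesis on $\Pi$; the paper's computation, by contrast, parametrises $\Pi$ using the row-sum-one constraint. For the same reason, your invocation of \autoref{cor:error0_pi_APLambda} is harmless but redundant---the $\Longrightarrow$ direction of \autoref{prop:cond1} alone already rules out every $\Pi$ once the $\alpha$-system is shown inconsistent.
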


\begin{proof}
  Consider
  \begin{align*}
    P = \begin{pmatrix}[1.2]
      0 & \frac{1}{2} & \frac{1}{2} \\
      \frac{1}{4} & \frac{1}{4} & \frac{1}{2} \\
      \frac{1}{2} & \frac{1}{4} & \frac{1}{4} \\
    \end{pmatrix}, \qquad \Omega = \{\{1, 2\}, \{3\}\}
  \end{align*}
  Note that
  \begin{align*}
    P \Lambda = P \begin{pmatrix}
      1 & 0 \\ 1 & 0 \\ 0 & 1
    \end{pmatrix} = \begin{pmatrix}[1.2]
      \frac{1}{2} & \frac{1}{2} \\
      \frac{1}{2} & \frac{1}{2} \\
      \frac{3}{4} & \frac{1}{4}
    \end{pmatrix}
    = \Lambda \begin{pmatrix}[1.2]
      \frac{1}{2} & \frac{1}{2} \\
      \frac{3}{4} & \frac{1}{4}
    \end{pmatrix} =: \Lambda \Pi
  \end{align*}
  Hence, this DTMC is ordinarily lumpable w.r.t.~$\Omega$.
  By \cite[Theorem 6.4.4]{finmc} (or by equation (3) on page 135 of \cite{finmc}), the DTMC is therefore also weakly lumpable w.r.t.~$\Omega$.
  In order for $\norm{\Pi A - A P}_\infty = 0$ to hold (for some matrix $A$ consistent with $\Omega$ and for some matrix $\Pi$, not
  necessarily the $\Pi$ given above), we would need
  \begin{align*}
    \Pi A = \begin{pmatrix}
      \Pi(1, 1) & 1 - \Pi(1, 1) \\
      \Pi(2, 1) & 1 - \Pi(2, 1)
    \end{pmatrix} \begin{pmatrix}
      \alpha(1) & \alpha(2) & 0 \\
      0 & 0 & 1
    \end{pmatrix}
    &= \begin{pmatrix}
      \Pi(1, 1) \alpha(1) &  \Pi(1, 1) \alpha(2) & 1 - \Pi(1, 1) \\
      \Pi(2, 1) \alpha(1) &  \Pi(2, 1) \alpha(2) & 1 - \Pi(2, 1)
    \end{pmatrix} \\
    &\overset{!}{=} \begin{pmatrix}
      \frac{1}{4}\alpha(2) & \frac{1}{2}\alpha(1)+\frac{1}{4}\alpha(2) & \frac{1}{2}\alpha(1)+\frac{1}{2}\alpha(2) \\
      \frac{1}{2} & \frac{1}{4} & \frac{1}{4}
    \end{pmatrix}
    = AP
  \end{align*}
  Looking at the lower right entry, we find that $\Pi(2, 1) = \frac{3}{4}$ would need to hold,
  and looking at the lower left entry, we then see that $\alpha(1) = \frac{2}{3}$ and hence $\alpha(2) = \frac{1}{3}$.
  But then, we have:
  \begin{align*}
    \Pi A = \begin{pmatrix}
      \Pi(1, 1) & 1 - \Pi(1, 1) \\
      \frac{3}{4} & \frac{1}{4}
    \end{pmatrix} \begin{pmatrix}
      \frac{2}{3} & \frac{1}{3} & 0 \\
      0 & 0 & 1
    \end{pmatrix}
    &= \begin{pmatrix}
      \Pi(1, 1) \cdot \frac{2}{3} &  \Pi(1, 1) \cdot \frac{1}{3} & 1 - \Pi(1, 1) \\
      \frac{1}{2} &  \frac{1}{4} & \frac{1}{4}
    \end{pmatrix} \\
    &\overset{!}{=} \begin{pmatrix}[1.2]
      \frac{1}{12} & \frac{5}{12} & \frac{1}{2} \\
      \frac{1}{2} & \frac{1}{4} & \frac{1}{4}
    \end{pmatrix}
    = AP
  \end{align*}
  Looking at the upper right entry, we now also see that we would need
  $\Pi(1, 1) = \frac{1}{2}$, which yields
  \begin{align*}
    \Pi A = \begin{pmatrix}[1.2]
      \frac{1}{2} & \frac{1}{2} \\
      \frac{3}{4} & \frac{1}{4}
    \end{pmatrix} \begin{pmatrix}
      \frac{2}{3} & \frac{1}{3} & 0 \\
      0 & 0 & 1
    \end{pmatrix}
    &= \begin{pmatrix}[1.2]
      \frac{1}{3} & \frac{1}{6} & \frac{1}{2} \\
      \frac{1}{2} &  \frac{1}{4} & \frac{1}{4}
    \end{pmatrix}
    \neq \begin{pmatrix}[1.2]
      \frac{1}{12} & \frac{5}{12} & \frac{1}{2} \\
      \frac{1}{2} & \frac{1}{4} & \frac{1}{4}
    \end{pmatrix}
    = AP
  \end{align*}
  Thus, it is indeed impossible in this example to choose a stochastic
  matrix $\Pi$ and a matrix $A$ with $\alpha$ distributions as rows
  leading to $\norm{\Pi A - A P}_\infty = 0$ for the weakly lumpable $\Omega = \{\{1, 2\}, \{3\}\}$.
  As $\Omega$ is also ordinarily lumpable, we further see that ordinary
  lumpability does not imply dynamic-exactness, either.
\end{proof}

\subsection{Choosing the aggregated transition matrix when using weighted state space partitioning}
\label{ssec:median}

In the current \autoref{sec:lump}, we mentioned repeatedly that we choose $\Pi = A P \Lambda$
(respectively $\Theta = A Q \Lambda$) when we aggregate a Markov chain
using weighted state space partitioning, which was justified by \autoref{cor:error0_pi_APLambda}.
We now want to take a closer look
at whether this is indeed the best choice, or if we can achieve a lower error
bound by using a different matrix for $\Pi$ or $\Theta$. In the following,
we will focus on the discrete time case, but as we only consider the problem
of finding a matrix $\Pi$ such that $\norm{\Pi A - A P}_\infty$ is minimal
(when $A$ and $P$ are fixed), the calculations below also hold for
CTMCs.

We start by introducing the median-based scheme to determine $\Pi$. This
scheme might achieve lower error bounds than setting $\Pi = A P \Lambda$ and can
be applied in the context of weighted state space partitioning.
\cite{adaptformalagg} already introduced this method for the special
case where the distributions $\alpha_\sigma$ are
uniform distributions on the aggregates.
The median-based scheme works as follows: assume
that the partition $\Omega$ and the distributions $\alpha_\sigma$ are fixed
(hence $A$ and $\Lambda$ are fixed, we now want to determine $\Pi$).
We then set $\Pi(\rho,\sigma)$ to the weighted median of the (multi-)set
\begin{align}
  \label{eq:median1}
  a_s &= \frac{1}{\alpha(s)} \sum_{r \in \rho} \alpha(r) P(r, s) \qquad (s \in \sigma \textrm{ with } \alpha(s) > 0)
\end{align}
weighted with the weights $\alpha(s)$ for every state $s \in \sigma$. The weighted median
of the (multi-)set $\{ a_s \;|\; s \in \sigma \textrm{ with } \alpha(s) > 0\}$ is the element $a_{s_{\textrm{med}}}$ with the property that
\begin{align}
  \label{eq:median2}
  \sum_{\substack{s \in \sigma: \alpha(s) > 0\\ a_s < a_{s_{\textrm{med}}}}} \alpha(s) \leq \frac{1}{2}
  \quad &\textrm{and} \quad
  \sum_{\substack{s \in \sigma: \alpha(s) > 0\\ a_s > a_{s_{\textrm{med}}}}} \alpha(s) \leq \frac{1}{2}
\end{align}
It can be that there are multiple elements in the (multi-)set $\{ a_s \;|\; s \in \sigma \textrm{ with } \alpha(s) > 0\}$
which satisfy \eqref{eq:median2}. In such a case, the weighted median is non-unique,
and any of the elements satisfying \eqref{eq:median2} may be chosen.
For a fixed partition $\Omega$ and fixed $\alpha_\sigma$ distributions,
setting $\Pi(\rho,\sigma)$ to this weighted median minimizes the error bound
$\norm{\Pi A - A P}_\infty$ as we will see in \autoref{prop:median_scheme}.

\begin{remark}
  Using the median-based scheme can result in matrices $\Pi$ and $\Theta$
  which are no longer stochastic or a generator. Therefore, the aggregated
  transient distributions $\pi_k$ and $\pi_t$ will also no longer
  necessarily be probability distributions in this case.
\end{remark}

\begin{proposition}
  \label{prop:median_scheme}
  Given a DTMC (or CTMC),
  assume that the partition $\Omega$ is fixed, and that the distributions
  $\alpha$ are fixed as well (but arbitrary). Then, setting $\Pi$ (or $\Theta$)
  according to \eqref{eq:median1} and \eqref{eq:median2} minimizes $\left(\abs{\Pi A - A P} \cdot \mathbf{1}_n\right)(\rho)$
  ($= \tau(\rho)$ in \cite{adaptformalagg}; $\left(\abs{\Theta A - A Q} \cdot \mathbf{1}_n\right)(\rho)$ for CTMCs)
  for every $\rho \in \Omega$ among all matrices $\Pi \in \bbR^{m \times m}$
  (or $\Theta \in \bbR^{m \times m}$).
  In particular, $\norm{\Pi A - A P}_\infty$ (or $\norm{\Theta A - A Q}_\infty$)
  is minimized.
\end{proposition}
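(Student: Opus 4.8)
The plan is to expand the quantity $\left(\abs{\Pi A - A P} \cdot \mathbf{1}_n\right)(\rho)$ explicitly using the weighted-state-space-partitioning structure of $A$ and $\Lambda$, and then to observe that for each fixed source aggregate $\rho$ it decouples into $m$ independent one-dimensional minimization problems, one per target aggregate $\sigma$. First I would note that, since $A(\sigma,s) = \alpha_\sigma(s)$ is nonzero only when $\sigma = \omega(s)$, we have $(\Pi A)(\rho,s) = \Pi(\rho,\omega(s))\,\alpha(s)$ and $(AP)(\rho,s) = \sum_{r \in \rho}\alpha(r)P(r,s)$, so that
\begin{align*}
  \left(\abs{\Pi A - A P} \cdot \mathbf{1}_n\right)(\rho)
  = \sum_{\sigma \in \Omega}\ \sum_{s \in \sigma} \abs{\Pi(\rho,\sigma)\,\alpha(s) - \sum_{r \in \rho}\alpha(r)P(r,s)}.
\end{align*}
The crucial structural observation is that in row $\rho$ the only entry of $\Pi$ entering the $\sigma$-block is $\Pi(\rho,\sigma)$; hence the $m$ blocks can be minimized independently, and moreover the choices for distinct rows $\rho$ are completely decoupled. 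This decoupling is exactly what lets us minimize $\left(\abs{\Pi A - A P} \cdot \mathbf{1}_n\right)(\rho)$ for every $\rho$ simultaneously, and therefore also minimize the maximum over $\rho$, which is $\norm{\Pi A - A P}_\infty$.

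Next, within a fixed block I would discard the states $s \in \sigma$ with $\alpha(s) = 0$, whose contribution $\abs{\sum_{r \in \rho}\alpha(r)P(r,s)}$ does not depend on $\Pi(\rho,\sigma)$, and factor $\alpha(s)$ out of the remaining terms to reduce the problem to minimizing
\begin{align*}
  x \;\longmapsto\; \sum_{\substack{s \in \sigma\\\alpha(s)>0}} \alpha(s)\,\abs{x - a_s}, \qquad x = \Pi(\rho,\sigma),
\end{align*}
with $a_s$ as in \eqref{eq:median1}. Since $\alpha_\sigma$ is a probability distribution, the weights $\alpha(s)$ over $\{s \in \sigma : \alpha(s)>0\}$ sum to $1$, which matches the threshold $\tfrac12$ appearing in \eqref{eq:median2}.

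The remaining work, and the only genuinely non-bookkeeping step, is the classical fact that this weighted $L^1$ objective is minimized at the weighted median. I would argue via convexity: the objective is piecewise linear and convex in $x$, so a point is a global minimizer if and only if $0$ lies in its subdifferential. Writing $W_<$, $W_=$, $W_>$ for the total $\alpha$-weight of the states whose value $a_s$ is strictly below, equal to, and strictly above the candidate $a_{s_{\mathrm{med}}}$, the subdifferential at $a_{s_{\mathrm{med}}}$ is the interval $[\,W_< - W_> - W_=,\ W_< - W_> + W_=\,]$. Using $W_< + W_= + W_> = 1$, the condition $0 \in [\,W_< - W_> - W_=,\ W_< - W_> + W_=\,]$ is equivalent to $W_< \le \tfrac12$ and $W_> \le \tfrac12$, which is precisely \eqref{eq:median2}. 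Hence the weighted median minimizes each block, so the median-based scheme minimizes $\left(\abs{\Pi A - A P}\cdot\mathbf{1}_n\right)(\rho)$ for every $\rho$, and therefore also $\norm{\Pi A - A P}_\infty$. The CTMC case is identical with $P,\Pi$ replaced by $Q,\Theta$.

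The main obstacle is the weighted-median optimality argument; everything preceding it is a direct unpacking of the definition of $A$. Care is needed to match the two inequalities of \eqref{eq:median2} to the two endpoints of the subdifferential, and to accommodate non-uniqueness of the median — any element satisfying \eqref{eq:median2} satisfies the same subgradient condition and hence is equally optimal.
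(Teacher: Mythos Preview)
Your proof is correct and follows essentially the same approach as the paper: expand $(|\Pi A - AP|\cdot\mathbf{1}_n)(\rho)$ using the block structure of $A$, observe that the minimization decouples across $(\rho,\sigma)$ pairs, and reduce each block to the standard weighted $L^1$ location problem $\min_x \sum_s \alpha(s)|x - a_s|$. The only difference is in how the weighted-median optimality is established: the paper argues directly via piecewise linearity and a one-step comparison showing that a non-median $a_{i^\ast}$ cannot be optimal, whereas you invoke convexity and compute the subdifferential at $a_{s_{\mathrm{med}}}$; both are standard and yield the same conclusion, with your version giving the if-and-only-if characterization a bit more cleanly at the cost of appealing to subgradient calculus.
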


\begin{proof}
  We write out the definition of $\left(\abs{\Pi A - A P} \cdot \mathbf{1}_n\right)(\rho)$
  (also compare with \eqref{eq:tau_zero}):
  \begin{align*}
    \left(\abs{\Pi A - A P} \cdot \mathbf{1}_n\right)(\rho)
    &= \sum_{s \in S} \abs{(\Pi A)(\rho, s) - (A P)(\rho, s)}
    = \sum_{\sigma \in \Omega}
      \sum_{s \in \sigma} \abs{
        \alpha(s)\Pi(\rho, \sigma)
        - \sum_{r \in \rho} \alpha(r) P(r,s)} \\
    &= \sum_{\sigma \in \Omega}
    \underbrace{\sum_{s \in \sigma} \alpha(s) \cdot \abs{
        \Pi(\rho, \sigma)
        - \frac{1}{\alpha(s)} \sum_{r \in \rho} \alpha(r) P(r,s)}}_{=: \tau(\rho,\sigma)}
  \end{align*}
  Minimizing $\left(\abs{\Pi A - A P} \cdot \mathbf{1}_n\right)(\rho)$ (with $\alpha$ and $\Omega$ fixed) amounts to minimizing
  $\tau(\rho,\sigma)$ separately for each $\sigma \in \Omega$, since
  we can choose a different value for $\Pi(\rho, \sigma)$ for every $\sigma$.
  We thus want to set $\Pi(\rho, \sigma)$ by solving the following optimization problem:
  \begin{align*}
    \Pi(\rho, \sigma) :=
    \argmin_{x \in \bbR}
    \sum_{s \in \sigma} \alpha(s) \cdot \left|
      x - \vphantom{\frac{1}{\alpha(s)} \sum_{r \in \rho}}\right.
      \underbrace{\frac{1}{\alpha(s)} \sum_{r \in \rho} \alpha(r) P(r,s)}_{a_s}
      \left.\vphantom{\frac{1}{\alpha(s)} \sum_{r \in \rho}}\right|
  \end{align*}
  Note: it is easy to see that we can assume $\alpha(s) > 0$ for all $s \in S$
  w.l.o.g.\ in this proof.
  We show that the weighted median of the (multi-)set $\{a_s \;|\; s\in\sigma\}$
  with weights $\alpha(s), \; s \in \sigma$ does indeed solve the given minimization problem.
  For ease of notation and w.l.o.g., we write $\{a_s \;|\; s\in\sigma\} = \{a_1, a_2, \ldots, a_k\}$,
  we assume $a_1 < a_2 < \ldots < a_k$ (we can merge elements $a_i$ and $a_{i + 1}$ if
  $a_i = a_{i + 1}$ into a new element whose weight is the sum of the
  two weights) and we denote the corresponding weights by $\alpha(1), \ldots, \alpha(k)$.
  Consider the function $f : \bbR \to \bbR, f(x) = \sum_{i = 1}^k \alpha(i) |x - a_i|$.
  Note that this function is piecewise linear: for $x \in (a_i, a_{i+1})$, $f$ is
  a linear combination of linear functions and thus linear. On the other hand,
  when $x = a_i$ for some $i$, the slope of $f$ might change.
  
  By this piecewise linearity and since $\lim_{x \to \pm \infty} f(x) = \infty$, we know
  that $f$ must take a global minimum for some $x$ in the set $\{a_1, \ldots, a_k\}$.
  Denote the value of $x$ for which the minimum is taken by $a_{i^\ast}$.
  Assume for a contradiction that $a_{i^\ast}$ is not the weighted median.
  W.l.o.g.~we consider the case where $\sum_{i > i^\ast} \alpha(i) > \frac{1}{2}$,
  i.e.~the case where the total weight of
  the elements which are bigger than $a_{i^\ast}$ is bigger than $\frac{1}{2}$.
  Note that we also assumed above (w.l.o.g.) that no two elements in the set
  $\{a_s \;|\; s\in\sigma\}$ are identical, i.e.~we have $a_{i^\ast + 1} > a_{i^\ast}$.

  We proceed to show that $f(a_{i^\ast + 1}) < f(a_{i^\ast})$, and
  hence the minimum is not taken at $a_{i^\ast}$ and our assumption
  that $a_{i^\ast}$ is not the weighted median must have been wrong.
  To see that this is true, consider the following:
  \begin{align*}
    f(a_{i^\ast + 1}) &= \sum_{i = 1}^k \alpha(i) |a_{i^\ast + 1} - a_i| \\
    &= \sum_{i = 1}^{i^\ast} \alpha(i) |a_{i^\ast + 1} - a_i|
    + \underbrace{\alpha(i^\ast + 1) |a_{i^\ast + 1} - a_{i^\ast + 1}|}_{0}
    + \sum_{i = i^\ast + 2}^{k} \alpha(i) |a_{i^\ast + 1} - a_i| \\
    &= \sum_{i = 1}^{i^\ast} \alpha(i) \left(|a_{i^\ast} - a_i| + |a_{i^\ast + 1} - a_{i^\ast}|\right)
    + \sum_{i = i^\ast + 2}^{k} \alpha(i) \left(|a_{i^\ast} - a_i| - |a_{i^\ast + 1} - a_{i^\ast}|\right) \\
    &= \underbrace{\sum_{i = 1}^k \alpha(i) |a_{i^\ast} - a_i|}_{f(a_{i^\ast})} + |a_{i^\ast + 1} - a_{i^\ast}|
    \underbrace{\left(\vphantom{\sum_{i = 1}^{i^\ast}}\right.
      \underbrace{\sum_{i = 1}^{i^\ast} \alpha(i)}_{< \frac{1}{2}} - \underbrace{\sum_{i = i^\ast + 1}^{k} \alpha(i)}_{> \frac{1}{2}}
    \left.\vphantom{\sum_{i = 1}^{i^\ast}}\right)}_{< 0}
  \end{align*}
  Note that the additional term (if compared to the previous line)
  in the leftmost sum on the last line is compensated by the additional
  term in the rightmost sum.

  The proof above also works for the continuous-time case.
\end{proof}

So we can achieve a lower error bound by using the median-based scheme
instead of $\Pi = A P \Lambda$, at the cost of $\Pi$ not necessarily being
stochastic anymore. Therefore, $\Pi = A P \Lambda$ could still be considered as the natural choice
for $\Pi$, because this guarantees stochasticity, and as this choice
is at least optimal when we can achieve an error bound of $0$.
As a final side remark, note that \autoref{prop:cond1} implies that all
elements in the multiset $a_s$ from equation \eqref{eq:median1} are equal
when there is some $\Pi$ such that $\norm{\Pi A - A P}_\infty = 0$.

\subsection{Comparison with more general state space reduction schemes}

Consider again the example
\begin{align*}
  P = \begin{pmatrix}[1.2]
    0 & \frac{1}{2} & \frac{1}{2} \\
    \frac{1}{4} & \frac{1}{4} & \frac{1}{2} \\
    \frac{1}{2} & \frac{1}{4} & \frac{1}{4} \\
  \end{pmatrix}
\end{align*}
which was already given in the proof of \autoref{prop:weaklump_nimpl_dynex}.
There, it was already shown that $\Omega = \{\{1,2\},\{3\}\}$ is not
a dynamic-exact aggregation when using weighted state space partitioning. With similar calculations, we can see that
$\{\{1,3\},\{2\}\}$ and $\{\{1\},\{2,3\}\}$ are no dynamic-exact aggregations, either.
Hence, it is impossible to reduce the state space of this Markov chain
to two states with a dynamic-exact aggregation under weighted state space
partitioning. However, we actually have
\begin{align*}
  A &= \begin{pmatrix}[1.2]
    \frac{7}{\sqrt{213}} & \frac{8}{\sqrt{213}} & \frac{10}{\sqrt{213}} \\
    \frac{44}{\sqrt{3621}} & -\frac{41}{\sqrt{3621}} & \frac{2}{\sqrt{3621}}
  \end{pmatrix} \approx \begin{pmatrix}
    0.48 & 0.55 & 0.69 \\
    0.73 & -0.68 & 0.03
  \end{pmatrix}, \quad
  \Pi = \begin{pmatrix}
    1 & 0 \\
    \frac{1}{4\sqrt{17}} & -\frac{1}{4}
  \end{pmatrix} \approx \begin{pmatrix}
    1 & 0 \\
    0.06 & -0.25
  \end{pmatrix} \\
  \implies \Pi A &= A P
\end{align*}
The matrices above were computed using the Schur decomposition as demonstrated
in the proof of \autoref{prop:schur}.
So, we can, in fact, find a reduction to a state space of dimension 2 which
is dynamic-exact if we allow general matrices $\Pi$ and $A$ and do not restrict
ourselves to weighted state space partitioning.
The more general view of state space reduction actually allows us to
find dynamic-exact aggregations for (almost) any desired number of aggregates,
as shown in \autoref{prop:schur}, but the above example demonstrates that
this is not true for weighted state space partitioning. We now further fix
the initial vector $p_0 = (\frac{19}{30}, 0, \frac{11}{30})^{\transp}$.
If we use the above dynamic-exact aggregation, we find that the optimal
choice for $\pi_0$ is (this can be calculated using a linear program):
\begin{align*}
  \pi_0 = \left(\frac{779}{90 \sqrt{213}}, \frac{152 \sqrt{17}}{90 \sqrt{213}}\right)^{\transp} \approx (0.59, 0.48)^{\transp}
  \textrm{ resulting in }
  \pi_0^{\transp} A = \left(\frac{19}{30}, 0, \frac{19}{45}\right)
  \textrm{ and }
  \norm{\pi_0^{\transp} A - p_0^{\transp}}_1 = \frac{1}{18} \approx 0.06
\end{align*}
Using the given dynamic-exact aggregation, we can thus, by \autoref{thm:dtmc_bound} \ref{thm:dtmc_bound_general},
calculate an approximation of the transient distribution $p_k$ at any time
$k$ with an error in the $\norm{\cdot}_1$-norm of at most $\frac{1}{18}$.
As an example,
\begin{align*}
  \widetilde{p}_{1} = \left(\frac{19}{90}, \frac{19}{45}, \frac{19}{45}\right)^{\transp}
  &\approx (0.21, 0.42, 0.42)^{\transp}, \quad
  p_1 = \left(\frac{11}{60},\frac{49}{120},\frac{49}{120}\right)^{\transp} 
  \approx (0.18, 0.41, 0.41)^{\transp} \\
  \widetilde{p}_{100} \approx \left(\frac{133}{450},\frac{76}{225},\frac{19}{45}\right)^{\transp}
  &\approx (0.30, 0.34, 0.42)^{\transp}, \quad
  p_{100} \approx \left(\frac{7}{25},\frac{8}{25},\frac{2}{5}\right)^{\transp} 
  = (0.28, 0.32, 0.4)^{\transp} = \textrm{stat.~dist.~of }P \\
  &\widetilde{p}_{n} \overset{n\to\infty}{\longrightarrow}
  \left(\frac{133}{450},\frac{76}{225},\frac{19}{45}\right)^{\transp}, \quad
  p_n \overset{n\to\infty}{\longrightarrow} \left(\frac{7}{25},\frac{8}{25},\frac{2}{5}\right)^{\transp} = \textrm{stat.~dist.~of }P
\end{align*}
Here, the error after one step is $\norm{\widetilde{p}_{1} - p_1}_1 = \frac{1}{18} \approx 0.06$.
Note that $\pi_0^{\transp} A \geq p_0^{\transp}$, and thus, by \autoref{prop:dynex_alwaysiniterr},
$\widetilde{p}_k \geq p_k$ for all $k$, and the error is $\frac{1}{18}$ for every $k$.
If we look instead at the best possible weighted state space partitioning
with $2$ aggregates, $\Pi = A P \Lambda$ and with $\pi_0$ a probability distribution,
we find that the choice minimizing the error bound after one step, i.e.~the
sum of the initial error and $\norm{\Pi A - A P}_\infty$, is the following
(which we found by brute force search with a precision of $4$ digits after the
decimal point):
\begin{align*}
  A &\approx \begin{pmatrix}
    1 & 0 & 0 \\
    0 & 0.46 & 0.54
  \end{pmatrix}, \quad
  \Pi \approx \begin{pmatrix}
    0 & 1 \\
    0.38 & 0.62
  \end{pmatrix}, \quad
  \pi_0 = \left(\frac{19}{30}, \frac{11}{30}\right)^{\transp} \\
  \implies \norm{\Pi A - A P}_\infty &\approx 0.07, \quad
  \norm{\pi_0^{\transp} A - p_0^{\transp}}_1 \approx 0.34 \\[0.5em]
  \widetilde{p}_1 &\approx (0.14, 0.40, 0.46)^{\transp}, \quad
  \widetilde{p}_{100} \approx (0.28, 0.34, 0.39)^{\transp}
\end{align*}
Here, $\norm{\widetilde{p}_1 - p_1}_1 \approx 0.10$.
Therefore, the dynamic-exact aggregation achieved a slightly lower error in the
approximation of $p_1$ when compared to the best possible weighted state
space partitioning. However, we see that, as the transient distributions
approach stationarity, the actual error of the latter becomes smaller
(around $0.03$ at step $100$),
even though the first row of $A$ in the dynamic-exact aggregation is actually
a multiple of the stationary distribution of $P$ (and it is thus possible
to have $\pi_k^{\transp}A$ equal the stationary distribution).

This example shows that it
can be useful to consider the more abstract view of aggregation in some
cases, achieving a lower approximation error. On the other hand, the advantage of weighted state space partitioning
is that we obtain an intuitive reduction to a Markov chain with fewer states
which we can interpret in a meaningful way, while the more abstract view
only results in an abstract lower-dimensional state space and the time
evolution on this reduced state space is represented by an arbitrary linear map,
and not necessarily by a Markov chain.

\subsection{Summary of results}

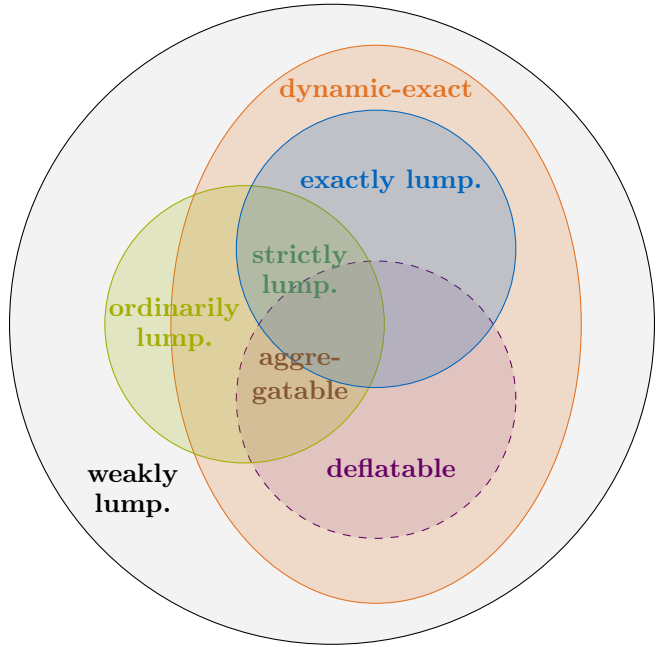
\begin{figure}[H]
  \begin{center}
    \begin{tikzpicture}
      \node[circle,inner sep=3cm,draw=black,fill=black,fill opacity=0.05] at (-0.58,-1) {};
      \draw[tumOrange,fill=tumOrange,fill opacity=0.2] (0,-1) ellipse (2.7cm and 3.7cm);
      \node[circle,inner sep=1.3cm,draw=commentpurple,dashed,fill=commentpurple,fill opacity=0.1] at (0,-2) {};
      \node[circle,inner sep=1.3cm,draw=tumGreen,fill=tumGreen,fill opacity=0.2] at (-1.73,-1) {};
      \node[circle,inner sep=1.3cm,draw=sectionblue,fill=sectionblue,fill opacity=0.2] at (0,0) {};
      \node[tumOrange] at (0,2.1) {\textbf{dynamic-exact}};
      \node[tumBlue] at (0.2,0.9) {\textbf{exactly lump.}};
      \node[commentpurple] at (0.2,-2.9) {\textbf{deflatable}};
      \node[tumGreen] at (-2.65,-0.8) {\textbf{ordinarily}};
      \node[tumGreen] at (-2.65,-1.2) {\textbf{lump.}};
      \node at (-3.2,-3) {\textbf{weakly}};
      \node at (-3.2,-3.4) {\textbf{lump.}};
      \node[sectionblue!50!tumGreen] at (-1,-0.1) {\textbf{strictly}};
      \node[sectionblue!50!tumGreen] at (-1,-0.5) {\textbf{lump.}};
      \node[tumGreen!50!commentpurple] at (-1,-1.5) {\textbf{aggre-}};
      \node[tumGreen!50!commentpurple] at (-1,-1.9) {\textbf{gatable}};
    \end{tikzpicture}
  \end{center}
  \caption{A Venn diagram summarizing the relation between the different types of lumpability}
  \label{fig:lump_types}
\end{figure}

In \autoref{fig:lump_types}, the relation between the different notions
of lumpability is shown with a Venn diagram. Deflatability is only defined
for DTMCs, which is why it is only depicted with a dashed circle. For the
other properties, the diagram is true for both the discrete- and the continuous-time
case. The whole diagram should be understood as comparing properties of
state space partitions, and we look at dynamic-exactness in the restricted
context of weighted state space partitioning here. The inclusion of
the exactly lumpable and deflatable sets in the dynamic-exact set was
proven in \autoref{prop:exlump_impl_dynex} and \autoref{prop:deflat_impl_dynex},
and the inclusion of the dynamic-exact set in the weakly lumpable set
was show in \autoref{prop:dynex_impl_weaklump}. On the other hand,
\autoref{prop:lump_nimpl_dynex} and \autoref{prop:weaklump_nimpl_dynex}
demonstrated that these inclusions are real inclusions and not equalities.
The example in the proof of \autoref{prop:weaklump_nimpl_dynex} also
showed that the ordinarily lumpable set is not included in the dynamic-exact
set. The fact that all the other depicted non-empty intersections of the various
properties exist is left for the reader to check, but very simply examples
do usually suffice (e.g.~to see that exactly lumpable and deflatable partitions
exist which are not ordinarily lumpable).

\section{Finding suitable aggregations}
\label{sec:findagg}

We will now consider how we can, in practice, find suitable aggregations
which achieve a low error bound. In contrast to the Schur
decomposition approach, which was already presented in \autoref{prop:schur},
the algorithms below will all use the weighted state space partition approach
and we will mostly stick to the notation used in the previous \autoref{sec:lump}.
The goal of these algorithms is to choose a partition $\Omega$ of the state space $S$ of
a DTMC or CTMC in such a way that the error bound $\norm{\Pi A - AP}_\infty$
(and potentially also the initial error) is small, as this will result in a good approximation
$\widetilde{p}_k$ of the transient distribution $p_k$. At the same
time, we would like $\abs{\Omega} = m \ll n = \abs{S}$ in order to
reduce the computational effort required to calculate $\widetilde{p}_k$.
An ideal algorithm would receive a parameter $\varepsilon$ as input
and determine the partition $\Omega$ with the fewest aggregates
satisfying $\norm{\pi_0^{\transp} A - p_0^{\transp}}_1 + k \cdot \norm{\Pi A - A P}_\infty < \varepsilon$,
where $k$ is the time point for which we would like to compute an approximation
of the transient distribution. This is motivated by
\autoref{thm:dtmc_bound} and \autoref{thm:ctmc_bound} and would
guarantee an error of at most $\varepsilon$.

Solving the above problem exactly will in general result in a runtime exceeding
the time needed to simply compute $p_k$ exactly for the original chain.
We will therefore consider different ways of choosing an $\Omega$ which
is somehow close to the optimal solution. The presented algorithms can
be used for general irreducible
Markov chains, without any assumption on the structure of the chain.
However, they can only be expected to perform well if a certain structure \emph{is} present:
\autoref{sec:errzero} and \autoref{sec:lump} characterize
settings in which the error bounds are low, and which can be identified
with comparatively little computational effort, in contrast to finding
a partition which satisfies exactness as defined in \autoref{def:exactagg}. The algorithms
presented in this section
will thus try to identify aggregates which are close to fulfilling
conditions such as exact lumpability or aggregatability. We will start
with some remarks on the time needed to calculate the exact transient
distribution of a Markov chain, in order to be able to assess the speed-up
resulting from aggregation.

\subsection{No aggregation}

If we perform no aggregation and compute exact transient distributions,
then we get the following runtimes:
\begin{itemize}
  \item For DTMCs, computing $p_k$ amounts to $k$ vector-matrix multiplications
  (each of the form $p_{i+1}^{\transp} = p_i^{\transp} P$) of a vector of length $n$ with a matrix
  of size $n \times n$. Each such multiplication has a runtime of order
  $\calO(n^2)$, and since we need $k$ of those, the total runtime amounts
  to $\calO(kn^2)$. Alternatively, we can compute $P^k$ and then multiply
  with $p_0$. This will need around $\calO(n^3\log(k))$ time (or $\calO(M(n)\log(k))$ where $M(n)$ is the
  time needed to multiply two square matrices of size $n$).
  \item For CTMCs, computing the exact transient distribution $p_t$ at
  time $t$ is not feasible in general. Instead, using uniformisation and
  truncation (see \cite[Section 2.5]{adaptformalagg} and \cite{foxglynn}), the transient
  distribution can be computed up to a pre-defined error $\varepsilon$
  (we will neglect the dependence of the runtime on $\varepsilon$, which
  we assume to be fixed).
  The runtime of this computation is $\calO(qtn^2)$
  where $q := \max_{s \in S} \abs{Q(s,s)}$.
\end{itemize}

\subsection{Almost aggregatability}

In \autoref{def:deflat}, aggregatability for DTMCs was defined, and we
have seen in \autoref{prop:deflat_impl_dynex} that deflatability
actually already implies that the aggregation is dynamic-exact and hence $\norm{\Pi A - A P}_\infty = 0$. The first algorithm,
taken from \cite{probalgmcagg}, will thus try to identify a partition
which is close to an aggregatable partition (or ``almost aggregatable'',
see \cite[Definition 2.8]{probalgmcagg}). As the definition of deflatability
only applies to DTMCs, this partitioning algorithm only works for DTMCs
a priori.

We quickly summarize the algorithm from \cite{probalgmcagg} here for
better understanding of some of the choices available when implementing it.
Note that in \cite{probalgmcagg}, the transpose of the
transition matrix $P$ is considered, and all transient distributions are
column vectors instead of row vectors. In the following
summary, we will stick to our notation, and hence present a transposed
version of the results in \cite{probalgmcagg}.

We first consider the case of an aggregatable matrix $P$, which means
that $\Lambda \Pi A = P$ by \cite[Proposition 2.6]{probalgmcagg},
where the partition corresponding to $\Lambda$ together with the
$\alpha$ distributions contained in $A$ is aggregatable and where $\Pi = A P \Lambda$.
We will write $\widetilde{P} := \Lambda \Pi A$ (so we now consider the
case where $\widetilde{P} = P$).
The idea is to find a connection
between the singular value decompositions (SVD for short, also see \cite[Section 2.4]{matrixcomputations}) of $\Pi$ and $P$ which allows
for identification of the aggregates by analysing the singular value
decomposition of $P$, without knowledge of $\Pi$ or $\Omega$. For an
almost aggregatable $P$, the changes in the singular value decomposition
compared to the closest aggregatable matrix will be small, and the algorithm
can thus also be applied if a matrix is only almost aggregatable. For
a detailed justification, we refer to \cite{probalgmcagg}.

In order to understand the connection between the singular value decomposition
and $\Omega$, we go back to assuming that $P$ is aggregatable and that $\Omega$ and the $\alpha$
distributions are given. Consider matrices
\begin{align*}
  D_A = \begin{pmatrix}
    \norm{\alpha_{\Omega_1}}_2 & & \\
    & \ddots & \\
    & & \norm{\alpha_{\Omega_m}}_2
  \end{pmatrix} \qquad \qquad
  D_{\Lambda} &= \begin{pmatrix}
    \sqrt{\abs{\Omega_1}} & & \\
    & \ddots & \\
    & & \sqrt{\abs{\Omega_m}}
  \end{pmatrix}
\end{align*}
Let $\widehat{U} \widehat{\Sigma} \widehat{V}$ be the singular value decomposition
of $D_\Lambda \Pi D_A$ (note: we do \emph{not} transpose $\widehat{V}$), i.e.~$\widehat{U} \widehat{\Sigma} \widehat{V} = D_\Lambda \Pi D_A$,
$\widehat{U}$ and $\widehat{V}$ are orthogonal (their rows and columns are orthonormal vectors), and $\widehat{\Sigma}$ contains
the singular values on its diagonal, ordered from largest to smallest value.
Then, the singular value decomposition $U\Sigma V$ of $P$ can be written as
\begin{align*}
  U = \left(\begin{array}{c|c} \Lambda D_{\Lambda}^{-1} \widehat{U} & U^{(2)} \end{array}\right) \in \bbR^{n \times n} \qquad
  \Sigma = \left(\begin{array}{c|c}
    \widehat{\Sigma} & 0 \\ \hline
    0 & 0
  \end{array}\right) \in \bbR^{n \times n} \qquad
  V = \left(\begin{array}{c} \widehat{V} D_A^{-1} A \\ \hline V^{(2)} \end{array}\right) \in \bbR^{n \times n}
\end{align*}
where $U^{(2)}$ and $V^{(2)}$ are such that $U$ and $V$ are orthogonal.
This is verified by computing:
\begin{align*}
  U\Sigma V = \Lambda \overbrace{D_{\Lambda}^{-1} \underbrace{\widehat{U} \widehat{\Sigma} \widehat{V}}_{D_{\Lambda} \Pi D_A} D_A^{-1}}^{\Pi} A = \widetilde{P} = P
\end{align*}
Details can be found in \cite{probalgmcagg}, where orthogonality of $U$ and $V$ is shown as well. Note that we used the matrices $D_{\Lambda}$ and $D_A$ only in order to
scale the singular values of $\Pi$ such that they agree with the singular values
of $P$, they have no other relevance. For simplicity, we will assume that
the singular value decomposition of $P$ is unique if the singular values
in $\Sigma$ are ordered by size and if we ignore $U^{(2)}$ and $V^{(2)}$ which
are anyway not used anymore. This is true, for example, when $\Pi$ has full rank (i.e.~rank $m$) and all non-zero
singular values of $P$ have multiplicity one.

We now consider the first $m$ rows
of $V$ (which are the first $m$ right-singular vectors of $P$), i.e.~$\widehat{V} D_A^{-1} A \in \bbR^{m \times n}$. We
call the $i$-th column of this submatrix
$v(i) = \left( V(1, i), \ldots, V(m, i) \right)^{\transp} \in \bbR^m$. Note that $i$ corresponds
to state $i$ of the Markov chain, and hence $\omega(i)$ is the corresponding
aggregate. Using that the upper $m$ rows of $V$ are equal to $\widehat{V} D_A^{-1} A$, we can write
\begin{align*}
  v(i) = \norm{\alpha_{\omega(i)}}_2^{-1} \alpha(i) \cdot \underbrace{\left( \widehat{V}(1, \omega(i)), \ldots, \widehat{V}(m, \omega(i)) \right)^{\transp}}_{\omega(i)\textrm{-th column of }\widehat{V}} \in \bbR^m
\end{align*}
This implies that for two states $r, s$ in the same aggregate, $v(r)$ and $v(s)$
will point in the same direction -- in fact,
\begin{align}
  \label{eq:svd_vecmult}
  \forall r, s \in S \textrm{ s.t.~} \omega(r) = \omega(s):
  \qquad \bbR^m \ni v(r) &= \underbrace{\frac{\alpha(r)}{\alpha(s)}}_{\in \bbR} \cdot \, v(s) \in \bbR^m
\end{align}
Furthermore, by orthogonality of the columns of $\widehat{V}$, $v(r)$ and $v(s)$ will
be orthogonal if $\omega(r) \neq \omega(s)$. This structure of the submatrix
can be exploited to recover $\Omega$ (and even $\alpha$). For almost aggregatable
$P$, we can only expect \eqref{eq:svd_vecmult} to hold approximately. The number
of aggregates $m$ is furthermore unknown. Instead, we choose an integer
$l$ (details on how to choose $l$ follow later) and then consider the first $l$ rows of $V$ (instead of the first $m$ rows).
This leads to the following three possible algorithms to compute $\Omega$ if
$l$ is already fixed:
\begin{itemize}
  \item \textbf{SVDsgn}: Proposed as a very simple algorithm in \cite{probalgmcagg}
  with only limited practical applicability due to its numerical instability.
  The aggregates are recovered by putting two states $r$ and $s$ into the
  same aggregate if the sign structure of the vectors $v(r)$ and $v(s)$
  is identical. By \eqref{eq:svd_vecmult} and by orthogonality of the
  vectors $v$ for states in different aggregates, this yields the correct
  partition $\Omega$ if $P$ is aggregatable. However, for almost aggregatable $P$,
  perturbed values in the vectors $v$ can lead to the sign of an entry changing,
  resulting in instability.
  \item \textbf{SVDseba}: Proposed as a more stable algorithm in \cite{probalgmcagg}
  via a combination with \cite{seba}. The \textbf{s}parse \textbf{e}igen\textbf{b}asis \textbf{a}pproximation
  algorithm proposed in \cite{seba} is applied to the first $l$ rows of $V$.
  This results in an approximate sparse basis of the space spanned by the
  first $l$ rows of $V$. In the case of aggregatability of $P$, the space
  spanned by the first $m$ rows of $V$ is spanned by the vectors $\alpha_{\Omega_1}, \ldots, \alpha_{\Omega_m}$
  as a consequence of \eqref{eq:svd_vecmult}. Therefore, the sparse
  basis obtained from applying the algorithm of \cite{seba} should approximately
  correspond to the $\alpha$ distributions for almost aggregatable $P$.
  \item \textbf{SVDdir}: A new proposal presented in this paper, with
  the intention to fully exploit \eqref{eq:svd_vecmult}. States are clustered such that
  the distance between the corresponding $v$ vectors within a cluster should be low, where the
  distance is measured as follows: the shorter vector is projected onto
  the longer vector, and we then measure the euclidean distance between the shorter original
  and the projected vector (see \autoref{alg:vdist} below). The intention behind this is to measure
  whether two vectors $v(r)$ and $v(s)$ point in approximately the same direction,
  as should be the case for $r$ and $s$ in the same aggregate by \eqref{eq:svd_vecmult}.
  The distance between vectors is not measured as the angle between vectors
  because this approach would suffer from the same numerical instability as
  SVDsgn: if the entries of a $v$ vector are close to $0$, small perturbations
  can lead to huge changes in the angle of the vector. An additional ordering
  of the $v$ vectors by length is applied to increase stability. We give
  the implementation details in \autoref{sec:svddir}.
\end{itemize}
With a given $l$ corresponding to the actual number of aggregates, the algorithms above can recover $\Omega$ from the
singular value decomposition of an almost aggregatable matrix $P$.
However, the number of aggregates is in general not known in advance.
In order to choose $l$, it makes sense to analyse the spectrum of singular
values of $P$. One way would be to identify a gap in this spectrum and
set $l$ to the number of singular values above the gap. We chose a different
approach which delivered better results in experiments. Call the singular
values $\gamma_1 \geq \ldots \geq \gamma_n \geq 0$. Given a threshold parameter $\varepsilon$, $l$ is set to the smallest value such that
\begin{align}
  \label{eq:svd_cutoff}
  \sum_{i=1}^l \gamma_i &\geq (1 - \varepsilon) \sum_{i=1}^n \gamma_i
\end{align}
i.e.~the first $l$ singular values sum to at least $1 - \varepsilon$
times the sum of all singular values. If the computation of all singular
values is too expensive, an alternative is to simply choose a fixed
number $l$ (or try multiple different $l$) which is small enough such that
the computation of the first $l$ singular values and corresponding right-singular
vectors is still affordable (there are algorithms which allow one to compute
only a part of the singular value decomposition, often referred to as
truncated SVD; compare with \cite[Section 10.4]{matrixcomputations}).

After having calculated $\Omega$ using one of the above SVD algorithms,
it is also necessary to specify the corresponding $\alpha$ distributions.
This can be done by comparing the length of the vectors $v(s)$
for all states $s$ in an aggregate. However, we saw better results in
our experiments with the following simple approach for setting $\alpha$
once the aggregation is fixed:
\begin{align}
  \label{eq:alphaprop}
  \alpha(s) &= \frac{\sum_{r \in S} P(r, s)}{\sum_{r \in S} \sum_{s' \in \omega(s)} P(r, s')}
\end{align}
$\alpha(s)$ is the same as the probability of being
in state $s$, conditioned on being in the aggregate of $s$, after the
Markov chain took a single step, starting with a uniform distribution.
Intuitively, the distributions $\alpha_\sigma$
should be approximations of this type of conditional probabilities, with
the exception that we do not necessarily start with a uniform distribution.
Whenever $\Omega$ is fixed
and $\alpha$ is chosen according to \eqref{eq:alphaprop}, we will refer
to these $\alpha$ distributions as proportional $\alpha$.
Note: proportional $\alpha$ is well-defined if the chain is irreducible.
It is easy to see that for a deflatable partition $\Omega$, choosing $\alpha$ as
in \eqref{eq:alphaprop} will result in the corresponding $\alpha$ distributions
which make the partition $\Omega$ deflatable.

The runtime of the SVD algorithms depends on the variant chosen. For
the simple SVDsgn, the computational cost is dominated by the singular
value decomposition, which needs $\calO(n^3)$ time (see \cite[Figure 8.6.1]{matrixcomputations}).
We saw that computing $p_k$ without aggregation results in a complexity
of $\calO(kn^2)$. Asymptotically (for large $k$ and $n$), applying the SVD algorithm therefore only makes sense if
$k \gg n$. As noted in \cite{probalgmcagg}, we can reduce the complexity
of the SVD approach by random sampling of the entries of $P$ under some
assumptions, which makes the algorithm more attractive. See \cite{probalgmcagg}
for details. In our experiments, we also used a singular value decomposition
algorithm for sparse matrices which only computes a partial singular value decomposition,
offered by the SciPy python package\footnote{see the SciPy documentation at \url{https://docs.scipy.org/doc/scipy/reference/generated/scipy.sparse.linalg.svds.html}}.

As mentioned before, the three variants of the SVD algorithm can only
be applied to DTMCs. They are expected to perform well for almost aggregatable
$P$, and this is confirmed in the experiments. However, in different settings,
identifying almost exactly lumpable partitions may provide better results, with the additional benefit
that this is also possible for CTMCs. This is discussed in \autoref{sec:almost_exlump}.

\subsubsection{Implementation of SVDdir}
\label{sec:svddir}

We briefly summarize the implementation details of SVDdir for completeness. First,
we define the function which computes a distance which indicates whether two vectors point in approximately the same
direction, which should also work if vectors with entries close to zero
are perturbed randomly.

\begin{algorithm}[H]
  {\raggedright
  \hspace*{\algorithmicindent} \textbf{Input:} $\textrm{vectors } v_1, v_2 \in \bbR^l \textrm{ s.t.~} \norm{v_1}_2 \geq \norm{v_2}_2$ \\
  \hspace*{\algorithmicindent} \textbf{Output:} $\textrm{a real number } \geq 0 \textrm{, which roughly measures a distance between vector angles}$ \\~}
	\begin{algorithmic}[1]
    \State $v_{\textrm{proj}} \gets \textrm{projection of } v_2 \textrm{ onto } v_1$
    \State \Return $\norm{v_{\textrm{proj}} - v_2}_2$
	\end{algorithmic}
	\caption{vdist (measuring whether vectors point in a similar direction)}
	\label{alg:vdist}
\end{algorithm}

We can now look at the implementation details of SVDdir, given in
\autoref{alg:svddir}. After calculating the singular value decomposition of $P$
and after determining the number of rows of $V$ to consider with the
aid of \eqref{eq:svd_cutoff}, the algorithm iterates over all states $s \in S$
in descending order of the length of $v(s)$ (in line 13 of \autoref{alg:svddir}).
This ordering is used to
increase stability: the larger the entries of a vector, the less its
angle varies under small perturbations of the vector components.
Hence, the first assignments of states to aggregates are based on longer
vectors whose angles are most reliable.

The assignment to aggregates proceeds as follows: when processing state $s$,
the minimum distance $d_{\textrm{min}}$ of $v(s)$ (according to vdist, see \autoref{alg:vdist})
to the vectors $v(s_1), \ldots, v(s_i)$ is calculated, where $s_1, \ldots, s_i$
are all states which have already been processed. If this distance
is smaller than the parameter $\delta$ (i.e.~if $d_{\textrm{min}} < \delta$),
then $s$ is assigned to the aggregate of $s_{i^\ast}$ where the distance
$\textrm{vdist}(v(s), v(s_{i^\ast}))$ is minimal. In another effort
to increase numerical stability, the distance $d_{\textrm{min}}$ is first
only calculated for the states in $S_{\textrm{reliable}} = \{s' \in \{s_1, \ldots, s_i\} : \norm{v(s')}_2 > 2\delta\}$,
and an assignment to an aggregate of one of the states in $S_{\textrm{reliable}}$
is performed if $d_{\textrm{min}} < \delta$ on this set, and only if this
is not possible, we calculate distances to all previously processed states.
This additional step follows the same reasoning as before: the angle of
short perturbed vectors is uncertain, so the aggregate assignment is
more certain if we assign $s$ to an aggregate which contains a state
with a longer vector pointing in a similar direction as $v(s)$.
If no previously processed state $s'$ satisfies $\textrm{vdist}(v(s), v(s')) < \delta$,
then $s$ is assigned to a new aggregate.

As input for \autoref{alg:svddir}, $\varepsilon$ should be chosen according
to how close the resulting aggregation should be to an aggregatable partition.
For $\varepsilon = 0$ (and $\delta = 0$), the algorithm finds aggregatable
partitions. For $\varepsilon = 1$, all states are assigned to a single aggregate.
We used $\delta = 0.05$ in all our experiments. Using a constant $\delta$ regardless of the size
of the state space makes sense insofar as that the columns of $V$ are vectors
of unit length. However, since we crop the columns to dimension $l$ (which
results in shorter vectors in general), a $\delta$ depending on $l$ might also be
a good choice. We experimented with different $\delta$ values, and the results
do not seem to be very sensitive to which $\delta$ is chosen (aggregations
of similar quality are found for different $\delta$ values when adapting
$\varepsilon$ appropriately).
An in-depth analysis of the numerical stability of \autoref{alg:svddir} is
still missing, but we observed at least a convincing performance in our
experiments.

\begin{algorithm}[H]
  {\raggedright
  \hspace*{\algorithmicindent} \textbf{Input:} $\textrm{a Markov chain, defined via its transition matrix } P \textrm{ on state space }S = \{1, \ldots, n\}\textrm{,}$ \\
  \hspace*{\algorithmicindent} \hphantom{\textbf{Input:}} $\textrm{the parameter }\varepsilon\textrm{, and the parameter } \delta$ \\
  \hspace*{\algorithmicindent} \textbf{Output:} $\textrm{an aggregation function } \omega$ \\
  \hspace*{\algorithmicindent} \hphantom{\textbf{Output:}} $\textrm{whose corresponding partition is close to an aggregatable partition}$\\~}
	\begin{algorithmic}[1]
    \State $\omega \gets \left((s \in S) \mapsto 1\right)$ \Comment{aggregation function}
    \State $U, \Sigma, V \gets \textrm{singular value decomposition of }P$ \Comment{\parbox[t]{6cm}{s.t.~$P = U\Sigma V$, \emph{not} $P = U\Sigma V^{\mathsf{T}}$, with ordered singular values in $\Sigma$}}
    \State $\gamma_1, \ldots, \gamma_n \gets \textrm{values on diagonal of }\Sigma$ \Comment{$\gamma_1 \geq \ldots \geq \gamma_n \geq 0$ are the singular values of $P$}
    \State $l \gets 0$ \Comment{number of considered rows of $V$}
    \While{$\sum_{i=1}^l \gamma_i < (1 - \varepsilon) \sum_{i=1}^n \gamma_i$} \Comment{determine $m$ according to \eqref{eq:svd_cutoff}}
      \State $l \gets l + 1$
    \EndWhile
    \ForAll{$s \in \{1, \ldots, n\}$}
      \State $v(s) \gets (V(1, s), \ldots, V(l, s))^{\transp} \in \bbR^m$ \Comment{extract cropped columns from $V$}
    \EndFor
    \State $m \gets 0$ \Comment{current number of aggregates}
    \State $(s_1, \ldots, s_n) \;\gets\;$\parbox[t]{4.5cm}{permutation of $(1, \ldots, n)$ s.t.\ $\norm{v(s_1)}_2 \geq \ldots \geq \norm{v(s_n)}_2$}
    \ForAll{$i \in \{1, \ldots, n\} \textrm{ (in order)}$} \Comment{go through $v(s_i)$ by descending length}
      \State $S_{\textrm{reliable}} \gets \{ s \in \{s_1, \ldots, s_{i-1}\} : \norm{v(s)}_2 > 2\delta \}$
      \State $S_{\lnot\textrm{reliable}} \gets \{s_1, \ldots, s_{i-1}\} \setminus S_{\textrm{reliable}}$
      \State $d_{\textrm{min}} \gets \min_{s \in S_{\textrm{reliable}}} \textrm{vdist}(v(s_i), v(s))$ \Comment{see \hyperref[alg:vdist]{Algo.~\ref{alg:vdist}}, $d_{\textrm{min}} = \infty$ for $S_{\textrm{reliable}} = \varnothing$}
      \If{$d_{\textrm{min}} < \delta$}
        \State $\omega(s_i) \gets \omega\left(\argmin_{s \in S_{\textrm{reliable}}} \textrm{vdist}(v(s_i), v(s))\right)$ \Comment{\parbox[t]{4.5cm}{$s_i$ added to aggregate which contains closest vector}}
      \Else
        \State $d_{\textrm{min}} \gets \min_{s \in S_{\lnot\textrm{reliable}}} \textrm{vdist}(v(s_i), v(s))$ \Comment{try again with less reliable vectors}
        \If{$d_{\textrm{min}} < \delta$}
          \State $\omega(s_i) \gets \omega\left(\argmin_{s \in S_{\lnot\textrm{reliable}}} \textrm{vdist}(v(s_i), v(s))\right)$
        \Else
          \State $m \gets m + 1$
          \State $\omega(s_i) \gets m$ \Comment{$s_i$ added to new aggregate}
        \EndIf
      \EndIf
    \EndFor
    \State \Return $\omega$
	\end{algorithmic}
	\caption{Calculating almost aggregatable partitions with SVDdir}
	\label{alg:svddir}
\end{algorithm}

We conclude this section with a short runtime analysis of \autoref{alg:svddir}.
Line 2 (the singular value decomposition) takes $\calO(n^3)$ time, as stated
before (see \cite[Figure 8.6.1]{matrixcomputations}). Lines 1 and 3 to 11 are
negligible in comparison. Line 12 (sorting the vectors by descending length)
takes time $\calO(n \log(n))$ after having computed all vector lengths
in time $\calO(nl)$; both runtimes are smaller than $\calO(n^3)$.
In line 13, we loop over all $n$ states. Determining $S_{\textrm{reliable}}$ (lines 14 and 15)
can be done in time $\calO(1)$ by successively adding each processed state
with suitable length to $S_{\textrm{reliable}}$. Line 16 takes $\calO(nl)$ time
at most (we need to call vdist at most $n$ times, and vdist is applied
to vectors of dimension $l$). The same holds for line 20. The remaining part
of the loop body is negligible. We arrive at $\calO(nl)$ for the loop body,
giving a total runtime of the loop in lines 13 to 28 of $\calO(n^2l)$ which
is also smaller than $\calO(n^3)$. The overall runtime is thus still dominated
by the singular value decomposition with $\calO(n^3)$.

For larger state spaces, we only compute partial singular value decompositions
to circumvent the $\calO(n^3)$ runtime, resulting in the state clustering
time (the loop from lines 13 to 28) being the dominant factor with a runtime
of $\calO(n^2l)$.

\subsection{Almost exact lumpability}
\label{sec:almost_exlump}

By \autoref{prop:exlump_impl_dynex}, if $\Omega$ is exactly lumpable,
if the $\alpha$ distributions are uniform distributions, and if
$\Pi = A P \Lambda$ (or $\Theta = A Q \Lambda$), then the aggregation
is dynamic-exact. However,
we cannot expect an exactly lumpable partition to exist for a general Markov chain. It might be,
though, that a partition exists which is close to being exactly lumpable.
This motivates the following definition:

\begin{definition}
  \label{def:almost_exlump}
  We call a partition $\Omega$ $\varepsilon$-\textbf{almost exactly lumpable}
  if:
  \begin{align*}
    \forall s, s' \in S \textrm{ s.t.~} \omega(s) &= \omega(s'):
    \qquad
    \sum_{\rho \in \Omega} \abs{\sum_{r \in \rho} P(r, s) - \sum_{r \in \rho} P(r, s')} \leq \varepsilon \qedhere
  \end{align*}
\end{definition}

The incoming probabilities to two states in the same aggregate from another
aggregate are not required to be identical anymore as in \autoref{def:exlump},
but they are close to being identical. For the SVD partitioning algorithm, which
also takes a (different) parameter $\varepsilon$ used for cutting off the
smallest singular values (see \eqref{eq:svd_cutoff}), it seems to be difficult to derive
a bound on the $\norm{\Pi A - A P}_\infty$ factors depending on the input parameter $\varepsilon$.
For $\varepsilon$-almost exactly lumpable partitions, we have at least the
following result:

\begin{proposition}
  \label{prop:almost_exlump_tau}
  Given a partition $\Omega$ of the state space of a DTMC or CTMC which is $\varepsilon$-almost exactly lumpable,
  let $\alpha(s) = \frac{1}{\abs{\omega(s)}}$ and $\Pi = A P \Lambda$ or $\Theta = A Q \Lambda$. Then
  \begin{align*}
    \norm{\Pi A - A P}_\infty &\leq \abs{\Omega} \cdot \frac{\max_{\rho \in \Omega} \abs{\rho}}{\min_{\rho \in \Omega} \abs{\rho}} \cdot \varepsilon \qedhere
  \end{align*}
\end{proposition}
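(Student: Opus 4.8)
The plan is to build directly on the entrywise expansion of the error already carried out in the proof of \autoref{prop:median_scheme}. Specializing that expansion to the uniform weights $\alpha(s) = \frac{1}{\abs{\omega(s)}}$ and to $\Pi = A P \Lambda$, so that $\Pi(\rho,\sigma) = \frac{1}{\abs{\rho}} \sum_{s' \in \sigma} \sum_{r \in \rho} P(r,s')$, one obtains for every $s$ in an aggregate $\sigma$ the identity
\begin{align*}
  (\Pi A - A P)(\rho, s) = \frac{1}{\abs{\rho}}\left( \frac{1}{\abs{\sigma}} \sum_{s' \in \sigma} \sum_{r \in \rho} P(r,s') - \sum_{r \in \rho} P(r,s) \right).
\end{align*}
First I would rewrite the bracketed expression as the average over $s' \in \sigma$ of the differences $\sum_{r \in \rho} P(r,s') - \sum_{r \in \rho} P(r,s)$, and apply the triangle inequality to move the absolute value inside this average.

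The key step is then to estimate a single difference $\abs{\sum_{r \in \rho} P(r,s') - \sum_{r \in \rho} P(r,s)}$ for $s, s'$ in the same aggregate $\sigma$. Since this quantity is just one nonnegative summand of the sum $\sum_{\rho' \in \Omega} \abs{\sum_{r \in \rho'} P(r,s) - \sum_{r \in \rho'} P(r,s')}$ appearing in \autoref{def:almost_exlump}, it is bounded by $\varepsilon$. This replaces every difference by the uniform bound $\varepsilon$ at once.

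What remains is bookkeeping. Summing $\sum_{s \in S} \abs{(\Pi A - A P)(\rho, s)}$ over aggregates $\sigma$ and over $s, s' \in \sigma$, the factor $\frac{1}{\abs{\sigma}}$ cancels against the $\abs{\sigma}$ inner terms, and the $\abs{\sigma}$ outer terms then collapse the contribution of each $\sigma$ to at most $\abs{\sigma} \varepsilon$; using $\sum_{\sigma \in \Omega} \abs{\sigma} = n$ gives $\sum_{s \in S} \abs{(\Pi A - A P)(\rho, s)} \leq \frac{n \varepsilon}{\abs{\rho}}$. Taking the maximum over $\rho$ replaces $\abs{\rho}$ by $\min_{\rho \in \Omega} \abs{\rho}$, and bounding $n = \sum_{\sigma \in \Omega} \abs{\sigma} \leq \abs{\Omega} \cdot \max_{\rho \in \Omega} \abs{\rho}$ produces precisely $\abs{\Omega} \cdot \frac{\max_{\rho \in \Omega} \abs{\rho}}{\min_{\rho \in \Omega} \abs{\rho}} \cdot \varepsilon$.

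The main obstacle is combinatorial rather than analytic: the estimate is quite lossy, so the difficulty lies in routing the per-pair bound $\varepsilon$ (obtained for free by discarding all but one term of the almost-exact-lumpability sum) through the uniform weights so that the aggregate sizes assemble into the claimed ratio $\frac{\max \abs{\rho}}{\min \abs{\rho}}$. The continuous-time case needs no separate argument: the entrywise formula and \autoref{prop:median_scheme} hold verbatim with $P$ replaced by $Q$, and although $\sum_{r \in \rho} Q(r,s)$ may be negative, the whole derivation only ever uses absolute values of differences, so the signs are immaterial.
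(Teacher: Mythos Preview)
Your proof is correct and follows essentially the same route as the paper: expand $(\Pi A - AP)(\rho,s)$ with the uniform weights, rewrite it as an average over $s'\in\sigma$ of the differences $\sum_{r\in\rho}P(r,s')-\sum_{r\in\rho}P(r,s)$, bound each such difference by $\varepsilon$ via \autoref{def:almost_exlump}, and then count. The only cosmetic difference is that you keep $\rho$ fixed and take the maximum at the very end (yielding the intermediate bound $n\varepsilon/\abs{\rho}$), whereas the paper carries the $\max_\rho$ from the start and moves it inside the sum before bounding; both routes give the same estimate.
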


Hence, given an algorithm which takes $\varepsilon$ as input and outputs an
$\varepsilon$-almost exactly lumpable partition, we can choose some $\varepsilon$
which guarantees a desired error bound $\norm{\Pi A - A P}_\infty$ in advance without actually
running the algorithm and calculating $\norm{\Pi A - A P}_\infty$ for the resulting partition.
In practice, the bound in \autoref{prop:almost_exlump_tau} always seemed to
be much larger than the actual value of $\norm{\Pi A - A P}_\infty$, though.

\begin{proof}[Proof of \autoref{prop:almost_exlump_tau}]
  We have
  \begin{align}
    \label{eq:almost_exlump_tau}
    \begin{split}
      \norm{\Pi A - A P}_\infty
      &\overset{\textrm{cf.~\eqref{eq:tau_zero}}}{=} \max_{\rho \in \Omega} \sum_{\sigma \in \Omega} \sum_{s \in \sigma} \abs{
        \alpha(s)\Pi(\rho, \sigma)
        - \sum_{r \in \rho} \alpha(r) P(r,s)} \\
      &\overset{\Pi = AP\Lambda}{=} \; \max_{\rho \in \Omega} \sum_{\sigma \in \Omega} \sum_{s \in \sigma} \abs{
        \frac{1}{\abs{\sigma}} \sum_{r \in \rho} \frac{1}{\abs{\rho}} \sum_{s'\in\sigma} P(r, s')
        - \sum_{r \in \rho} \frac{1}{\abs{\rho}} P(r,s)} \\
      &= \max_{\rho \in \Omega} \sum_{\sigma \in \Omega} \sum_{s \in \sigma} \abs{
        \frac{1}{\abs{\sigma}} \frac{1}{\abs{\rho}} \sum_{s'\in\sigma} \sum_{r \in \rho} P(r, s')
        -  \frac{1}{\abs{\sigma}} \frac{1}{\abs{\rho}} \sum_{s'\in\sigma} \sum_{r \in \rho} P(r,s)} \\
      &= \max_{\rho \in \Omega} \sum_{\sigma \in \Omega} \frac{1}{\abs{\sigma}} \frac{1}{\abs{\rho}} \sum_{s \in \sigma}
      \abs{\sum_{s' \in \sigma} \sum_{r \in \rho} \left(P(r, s') - P(r, s)\right)} \\
      &\leq \max_{\rho \in \Omega} \sum_{\sigma \in \Omega} \frac{1}{\abs{\sigma}} \frac{1}{\abs{\rho}} \sum_{s \in \sigma}
      \sum_{s' \in \sigma} \abs{\sum_{r \in \rho} \left(P(r, s') - P(r, s)\right)} \\
      &\leq \sum_{\sigma \in \Omega} \frac{1}{\abs{\sigma}} \sum_{s \in \sigma}
      \sum_{s' \in \sigma} \max_{\rho \in \Omega} \frac{1}{\abs{\rho}} \abs{\sum_{r \in \rho} P(r, s') - \sum_{r \in \rho} P(r, s)} \\
      &\leq \sum_{\sigma \in \Omega} \frac{1}{\abs{\sigma}} \sum_{s \in \sigma}
      \sum_{s' \in \sigma}  \frac{1}{\min_{\rho \in \Omega} \abs{\rho}} \underbrace{\max_{\rho \in \Omega} \abs{\sum_{r \in \rho} P(r, s') - \sum_{r \in \rho} P(r, s)}}_{\leq \varepsilon} \\
      &\leq \sum_{\sigma \in \Omega} \frac{\abs{\sigma}}{\min_{\rho \in \Omega} \abs{\rho}} \cdot \varepsilon
      \leq \abs{\Omega} \cdot \frac{\max_{\rho \in \Omega} \abs{\rho}}{\min_{\rho \in \Omega} \abs{\rho}} \cdot \varepsilon
    \end{split}
  \end{align}
  The same calculation holds for continuous time with $P$ replaced by $Q$ and $\Pi$ replaced by $\Theta$.
  Also note that the bound can be improved to
  \begin{align}
    \label{eq:almost_exlump_tau_im}
    \norm{\Pi A - A P}_\infty &\leq \abs{\Omega} \cdot \frac{\max_{\rho \in \Omega} \abs{\rho} - 1}{\min_{\rho \in \Omega} \abs{\rho}} \cdot \varepsilon
  \end{align}
  with the same calculation by noting that the double sum $\sum_{s \in \sigma} \sum_{s' \in \sigma}$ in \eqref{eq:almost_exlump_tau}
  actually sums over elements which are zero for $s = s'$.
\end{proof}

\begin{remark}
  The bound given in \autoref{prop:almost_exlump_tau} cannot be significantly improved.
  We can give a series of examples of $\varepsilon$-almost exactly lumpable
  partitions $\Omega$ with $\abs{\Omega} = m$, $\abs{S} = n = 2m$, $\varepsilon = \frac{1}{2m} = \frac{1}{n}$
  and $\norm{\Pi A - A P}_\infty = \abs{\Omega} \cdot \frac{\varepsilon}{2} = \frac{1}{4}$.
  Consider the state space $S = \{1, \ldots, n\}$ with $n = 2m$, the partition $\Omega = \{\{1, 2\}, \{3, 4\}, \ldots, \{2m-1, 2m\}\}$
  and $\alpha(s) = \frac{1}{2} = \frac{1}{\abs{\omega(s)}}$ for all $s$. Define
  $P(1, 2s) = \frac{2 + \varepsilon n}{2n}$ and $P(1, 2s - 1) = \frac{2 - \varepsilon n}{2n} > 0$
  for $s \in \{1, \ldots, m\}$, and $P(r, s) = \frac{1}{n}$ for $r \geq 2$ and $s \in S$. This partition is $\varepsilon$-almost
  exactly lumpable. Indeed, consider two states $s', s''$ in the same
  aggregate. Then, w.l.o.g., we have $s' = 2s$ and $s'' = 2s - 1$ for some
  $s \in \{1, \ldots, m\}$. Hence
  \begin{align*}
    \sum_{\rho \in \Omega} \abs{\sum_{r \in \rho} P(r, s') - \sum_{r \in \rho} P(r, s'')}
    &= \sum_{\rho \in \Omega} \abs{\sum_{r \in \rho} P(r, 2s) - \sum_{r \in \rho} P(r, 2s - 1)} \\
    &= \abs{\frac{2 + \varepsilon n}{2n} + \frac{1}{n} - \frac{2 - \varepsilon n}{2n} - \frac{1}{n}} + \underbrace{\sum_{\rho \in \Omega \setminus \{\{1,2\}\}} \abs{\sum_{r \in \rho} \frac{1}{n} - \sum_{r \in \rho} \frac{1}{n}}}_{0}
    = \frac{2 \varepsilon n}{2n} = \varepsilon
  \end{align*}
  Now, consider $r, s \in \{1, \ldots, m\}$. Setting $\rho = \{2r - 1, 2r\}, \sigma = \{2s - 1, 2s\}$, we have
  \begin{align*}
    \Pi(\rho, \sigma) &=
    \frac{1}{2} \cdot \left(P(2r - 1, 2s - 1) + P(2r - 1, 2s)\right)
    + \frac{1}{2} \cdot \left(P(2r, 2s - 1) + P(2r, 2s)\right)
    = \frac{2}{n}
  \end{align*}
  Hence, it holds that
  \begin{align*}
    \norm{\Pi A - A P}_\infty &= \max_{\rho \in \Omega}
    \sum_{\sigma \in \Omega} \sum_{s \in \sigma}
    \abs{\alpha(s) \Pi(\rho, \sigma) - \sum_{r \in \rho} \alpha(r) P(r, s)} \\
    &= \max_{\rho \in \Omega} \sum_{s = 1}^{m}
    \left(\abs{\alpha(2s - 1) \cdot \frac{2}{n} - \sum_{r \in \rho} \alpha(r) P(r, 2s - 1)}
    + \abs{\alpha(2s) \cdot \frac{2}{n} - \sum_{r \in \rho} \alpha(r) P(r, 2s)}\right) \\
    &= \sum_{s = 1}^{m}
    \left(\abs{\frac{1}{2} \cdot \frac{2}{n} - \sum_{r \in \{1,2\}} \frac{1}{2} P(r, 2s - 1)}
    + \abs{\frac{1}{2} \cdot \frac{2}{n} - \sum_{r \in \{1,2\}} \frac{1}{2} P(r, 2s)}\right) \\
    &= \sum_{s = 1}^{m} \left(\abs{\frac{1}{2} \cdot \frac{2}{n} - \frac{4 - \varepsilon n}{4n}}
    + \abs{\frac{1}{2} \cdot \frac{2}{n} - \frac{4 + \varepsilon n}{4n}}\right)
    = \sum_{s = 1}^{m} \frac{\varepsilon}{2} = m \cdot \frac{\varepsilon}{2} = \abs{\Omega} \cdot \frac{\varepsilon}{2}
  \end{align*}
  Therefore, we cannot drop the dependence on $\abs{\Omega}$ in the bound
  given in \autoref{prop:almost_exlump_tau}. Actually, the improved bound
  given in \eqref{eq:almost_exlump_tau_im} is tight in this case as we
  have $\min_{\rho \in \Omega} \abs{\rho} = \max_{\rho \in \Omega} \abs{\rho} = 2$.
\end{remark}

We now develop an algorithm which finds an $\varepsilon$-almost exactly lumpable
partition as a counterpart to the SVD approach for almost aggregatable partitions.
The algorithm works for both DTMCs and CTMCs (we give the DTMC version,
but for CTMCs, $P$ only has to be replaced by $Q$). For a given $\varepsilon$,
the algorithm should find a partition which is as coarse as possible and
still satisfies $\varepsilon$-almost exact lumpability. Note that in general,
there is no unique coarsest $\varepsilon$-almost exactly lumpable partition.
However, there always is a unique coarsest \emph{exactly lumpable} partition
which may be found by successive refinement of the partition $\Omega = \{S\}$.
We can thus hope to get good results by using a successive refinement
algorithm for $\varepsilon$-\emph{almost exact lumpability} as well. For
completeness and better understanding, we first show that there always is
a unique coarsest \emph{exactly lumpable} partition (which was already
shown in \cite{exactperfequiv}, where the partition refinement approach is
mentioned on p.~269). This justifies
the structure of the algorithm below for practical purposes.

\begin{lemma}
  \label{lem:exact_lump_coarse}
  Assume that the partition $\Omega$ satisfies exact lumpability.
  Let $\widetilde{\Omega}$ be a different partition of the state space
  which satisfies:
  \begin{align*}
    \forall \widetilde{\sigma} \in \widetilde{\Omega}:
    \exists \sigma_1, \ldots, \sigma_k \in \Omega : \widetilde{\sigma} = \bigcup_{i=1}^k \sigma_i
  \end{align*}
  i.e.~$\widetilde{\Omega}$ is coarser than $\Omega$ in the sense that
  the aggregated states in partition $\Omega$ are subsets of the aggregated
  states in $\widetilde{\Omega}$.

  Then, we have for DTMCs:
  \begin{equation}
    \label{eq:coarser_exlump}
    \forall s, s' \in S \textrm{ s.t.~} \omega(s) = \omega(s'):
    \forall \widetilde{\sigma} \in \widetilde{\Omega}: \qquad
    \sum_{r \in \widetilde{\sigma}} P(r, s) = \sum_{r \in \widetilde{\sigma}} P(r, s')
  \end{equation}
  The same holds for CTMCs.
\end{lemma}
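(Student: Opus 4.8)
The plan is to use that every coarse aggregate $\widetilde{\sigma} \in \widetilde{\Omega}$ is, by hypothesis, a disjoint union of fine aggregates of $\Omega$, and then to add up the exact-lumpability equalities that already hold for each of those fine aggregates. First I would fix two states $s, s'$ with $\omega(s) = \omega(s')$ (so they lie in the same aggregate of the finer partition $\Omega$) and fix an arbitrary $\widetilde{\sigma} \in \widetilde{\Omega}$. The coarsening assumption provides $\sigma_1, \ldots, \sigma_k \in \Omega$ with $\widetilde{\sigma} = \bigcup_{i=1}^{k} \sigma_i$, and since $\Omega$ is a partition these $\sigma_i$ are pairwise disjoint.

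The core step is then a simple regrouping of the summation. I would split the incoming-probability sum over the coarse aggregate into the contributions of the fine aggregates, writing $\sum_{r \in \widetilde{\sigma}} P(r, s) = \sum_{i=1}^{k} \sum_{r \in \sigma_i} P(r, s)$, where disjointness of the $\sigma_i$ ensures that no source state is counted twice. Because $\Omega$ is exactly lumpable and $\omega(s) = \omega(s')$, \autoref{def:exlump} applied with $\rho = \sigma_i$ gives $\sum_{r \in \sigma_i} P(r, s) = \sum_{r \in \sigma_i} P(r, s')$ for every $i$. Summing these $k$ equalities and reassembling the right-hand side into a single sum over $\widetilde{\sigma}$ yields $\sum_{r \in \widetilde{\sigma}} P(r, s) = \sum_{r \in \widetilde{\sigma}} P(r, s')$, which is precisely \eqref{eq:coarser_exlump}.

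The continuous-time claim then follows verbatim with $P$ replaced by $Q$, since the exact-lumpability condition for CTMCs in \autoref{def:exlump} has exactly the same shape. I do not anticipate a genuine obstacle here, as the argument is just additivity of the summation over a disjoint union. The one point demanding care is to keep the orientation of exact lumpability straight: it constrains the summed \emph{incoming} probabilities over the \emph{source} aggregate, so the set being decomposed must be $\widetilde{\sigma}$ (the range of the summation variable $r$), which is exactly what lets the sum split cleanly into the fine source aggregates $\sigma_i$.
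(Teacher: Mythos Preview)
Your proposal is correct and follows essentially the same approach as the paper: decompose the coarse aggregate $\widetilde{\sigma}$ into its constituent fine aggregates $\sigma_1,\ldots,\sigma_k$, split the sum over $\widetilde{\sigma}$ accordingly, apply exact lumpability to each inner sum, and reassemble. Your write-up is, if anything, slightly more explicit than the paper's (you spell out the disjointness and flag the incoming-vs-outgoing orientation), but the argument is identical.
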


\begin{remark}
  The partition $\widetilde{\Omega} = \{S\}$ is always coarser than any
  exactly lumpable partition $\Omega$, hence \autoref{lem:exact_lump_coarse}
  implies that
  \begin{align*}
    \forall s, s' \in S \textrm{ s.t.~} \omega(s) &= \omega(s'): \qquad
    \sum_{r \in S} P(r, s) = \sum_{r \in S} P(r, s')
  \end{align*}
  Also note that \autoref{lem:exact_lump_coarse} does \emph{not} imply
  that $\widetilde{\Omega}$ is exactly lumpable as well since in
  \eqref{eq:coarser_exlump}, the aggregation function $\omega$
  corresponds to the partition $\Omega$ and not to $\widetilde{\Omega}$.
\end{remark}

\begin{proof}[Proof of \autoref{lem:exact_lump_coarse}]
  This is easy to see. Let $\widetilde{\sigma} \in \widetilde{\Omega}$ be arbitrary
  and $\sigma_1, \ldots, \sigma_k \in \Omega$ s.t.~$\widetilde{\sigma} = \bigcup_{i=1}^k \sigma_i$.
  Then, for $s, s' \in S$ s.t.~$\omega(s) = \omega(s')$, we have
  \begin{align*}
    \sum_{r \in \widetilde{\sigma}} P(r, s)
    &= \sum_{i=1}^k \sum_{r \in \sigma_i} P(r, s)
    = \sum_{i=1}^k \sum_{r \in \sigma_i} P(r, s')
    = \sum_{r \in \widetilde{\sigma}} P(r, s')
  \end{align*}
  The same calculation holds for CTMCs.
\end{proof}

\begin{proposition}
  For every DTMC or CTMC, there exists a unique coarsest exactly lumpable
  partition.
\end{proposition}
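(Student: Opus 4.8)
The plan is to work in the lattice of partitions of $S$ ordered by refinement and to show that the collection $\mathcal{E}$ of exactly lumpable partitions has a maximum element. First I would note that $\mathcal{E}$ is non-empty, since the singleton partition $\{\{1\}, \ldots, \{n\}\}$ is trivially exactly lumpable ($\omega(s) = \omega(s')$ then forces $s = s'$), and that $\mathcal{E}$ is finite because $S$ is finite. Hence it suffices to prove that $\mathcal{E}$ is closed under the binary join operation $\vee$; iterating this over the finitely many members of $\mathcal{E}$ then shows that the join $\Omega^\ast$ of all of them again lies in $\mathcal{E}$, is coarser than every exactly lumpable partition, and is therefore the unique coarsest one, with uniqueness following from antisymmetry of the refinement order.

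The core step is thus to show that if $\Omega_1, \Omega_2 \in \mathcal{E}$, then $\Omega := \Omega_1 \vee \Omega_2 \in \mathcal{E}$. Recall that the blocks of $\Omega$ are the equivalence classes of the transitive closure of the relation $s \sim s'$ defined by ``$s, s'$ lie in a common block of $\Omega_1$ or in a common block of $\Omega_2$''. To verify exact lumpability of $\Omega$, I would fix a block $\rho \in \Omega$ and consider the map $s \mapsto \sum_{r \in \rho} P(r, s)$; the goal is to show that this map is constant on each block of $\Omega$, which is precisely condition \eqref{eq:exactlump} for $\Omega$. By transitivity of equality it is enough to show that the map takes equal values on any pair $s \sim s'$ generated by the relation above.

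Here is where \autoref{lem:exact_lump_coarse} enters. Suppose $s, s'$ lie in a common block of $\Omega_1$. Since $\Omega$ is coarser than $\Omega_1$, the block $\rho$ is a union of blocks of $\Omega_1$, so \autoref{lem:exact_lump_coarse}, applied with the exactly lumpable partition $\Omega_1$ and the coarser partition $\widetilde{\Omega} = \Omega$, yields $\sum_{r \in \rho} P(r, s) = \sum_{r \in \rho} P(r, s')$. The symmetric statement holds when $s, s'$ share a block of $\Omega_2$. Chaining these equalities along a path realising the transitive closure shows that $s \mapsto \sum_{r \in \rho} P(r, s)$ is constant on every block of $\Omega$, so $\Omega$ is exactly lumpable. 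The identical argument with $P$ replaced by $Q$ settles the CTMC case.

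The main obstacle to watch is the warning in the remark following \autoref{lem:exact_lump_coarse}: that lemma does \emph{not} by itself assert exact lumpability of the coarser partition, since its aggregation function $\omega$ still refers to the finer partition $\Omega_1$. Consequently a single application does not prove that $\Omega$ is exactly lumpable; the join structure is essential, as it lets me combine one application for $\Omega_1$ and one for $\Omega_2$ and bridge them through the transitive closure. Once closure under $\vee$ is established, the existence and uniqueness of the coarsest exactly lumpable partition follow immediately as described above.
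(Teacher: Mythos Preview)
Your proof is correct but follows a genuinely different strategy from the paper. The paper proceeds \emph{constructively}: it starts from the trivial partition $\{S\}$ and iteratively refines, at each step separating states $s,s'$ whenever some current aggregate witnesses a violation of \eqref{eq:exactlump}; the refinement stabilises at some $\Omega^{(k)}$, which is exactly lumpable by construction, and an induction using \autoref{lem:exact_lump_coarse} shows that every exactly lumpable partition stays finer than each $\Omega^{(i)}$ along the way. Your approach is instead lattice-theoretic: you show that the set of exactly lumpable partitions is non-empty, finite, and closed under the binary join, hence has a greatest element. Both arguments hinge on \autoref{lem:exact_lump_coarse}, but you invoke it twice (once for each of $\Omega_1,\Omega_2$) and bridge via the transitive closure defining the join, whereas the paper invokes it once per inductive step to control the refinement. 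Your route is arguably cleaner and exposes the lattice structure directly; the paper's route has the practical advantage of simultaneously delivering the partition-refinement algorithm that is then adapted in \autoref{alg:ealmostexlump}.
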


\begin{proof}
  The partition where every aggregate contains exactly one state is always
  exactly lumpable, so we know that an exactly lumpable partition exists.

  Let us call $\Omega^{(1)} = \{S\}$.  We will now construct a sequence
  of partitions $\Omega^{(1)}, \ldots, \Omega^{(k)}$ such that
  $\Omega^{(i+1)}$ is finer than $\Omega^{(i)}$ as follows.
  Assume we already have constructed $\Omega^{(i)}$. We now
  construct $\Omega^{(i+1)}$. For $s, s' \in S$ with $s \neq s'$, we
  \begin{enumerate}[(i)]
    \item \label{item:coarsest_lump_step1} assign $s$ and $s'$ to different aggregates in $\Omega^{(i+1)}$
    if they already belong to different aggregates in $\Omega^{(i)}$,
    \item \label{item:coarsest_lump_step2} assign $s$ and $s'$ to different aggregates in $\Omega^{(i+1)}$
    if they belong to the same aggregate in $\Omega^{(i)}$ but there exists
    some $\sigma \in \Omega^{(i)}$ such that
    \begin{align*}
      \sum_{r \in \sigma} P(r, s) \neq \sum_{r \in \sigma} P(r, s')
    \end{align*}
    \item assign $s$ and $s'$ to the same aggregate in $\Omega^{(i+1)}$ otherwise.
  \end{enumerate}
  We stop the construction as soon as $\Omega^{(i+1)} = \Omega^{(i)} =: \Omega^{(k)}$. Note that this
  needs to happen at some point, at the latest when every aggregate in $\Omega^{(i)}$ 
  contains only one state.

  Now,
  \begin{itemize}
    \item if $\Omega^{(i+1)} = \Omega^{(i)}$, then $\Omega^{(i+1)}$ is exactly lumpable
    since step \ref{item:coarsest_lump_step2} was not applied in the last iteration
    of the construction. Hence, by definition, $\Omega^{(i+1)} = \Omega^{(i)} = \Omega^{(k)}$ is exactly lumpable.
    \item for any exactly lumpable partition $\Omega$, we have
    that $\Omega$ is finer than (or equal to) $\Omega^{(i)}$ for all $i = 1, \ldots, k$. We prove
    this by induction. $\Omega^{(1)} = \{S\}$ is coarser than any partition, so the
    statement holds for $i = 1$. The step from $i$ to $i+1$ is done as follows:
    if $\Omega^{(i)}$ is coarser than the exactly lumpable $\Omega$, then, by \autoref{lem:exact_lump_coarse},
    we have that (where the partition function $\omega$ corresponds to
    the partition $\Omega$, and not to $\Omega^{(i)}$)
    \begin{align*}
      \forall s, s' \in S \textrm{ s.t.~} \omega(s) &= \omega(s'):
      \forall \sigma \in \Omega^{(i)}: \qquad
      \sum_{r \in \sigma} P(r, s) = \sum_{r \in \sigma} P(r, s')
    \end{align*}
    Hence, when calculating $\Omega^{(i+1)}$,  step \ref{item:coarsest_lump_step2} will only assign $s$ and
    $s'$ to different aggregates in $\Omega^{(i+1)}$ if $\omega(s) \neq \omega(s')$,
    i.e.~if they also belong to different aggregates in $\Omega$.
    The same holds for step \ref{item:coarsest_lump_step1} because
    $\Omega^{(i)}$ is coarser than $\Omega$. Therefore,
    states $s$ and $s'$ belonging to the same aggregate in $\Omega$
    will also be assigned to the same aggregate in $\Omega^{(i+1)}$.
    So $\Omega^{(i+1)}$ is also coarser than $\Omega$.
  \end{itemize}
  The two statements above imply that $\Omega^{(k)}$ is an exactly lumpable
  partition which is coarser than any other exactly lumpable partition, which
  concludes the proof. The same proof can be applied for CTMCs.
\end{proof}

The following \autoref{alg:ealmostexlump} for finding $\varepsilon$-\emph{almost exactly lumpable}
partitions resulted from the successive refinement technique shown in the proof above. It does not necessarily
find a partition with as few aggregates as possible, but has performed
well in experiments.

The idea of the algorithm is as follows: we start with the initial
partition $\Omega = \{S\}$ (represented in \autoref{alg:ealmostexlump} by the aggregation function
$\omega : S \to \bbN$ which maps every state to aggregate $1$).
$\Omega$ is then successively refined. At every refinement step,
for every aggregate $\sigma \in \Omega$
and for all states $s \in \sigma$, we construct vectors of incoming probabilities
\begin{align*}
  \textrm{inc}(s) = \left(
    \sum_{r \in \Omega_1} P(r, s) \,, \;\; \ldots \,, \;\;
    \sum_{r \in \Omega_m} P(r, s)
  \right)^{\transp} \in \bbR^{m}
\end{align*}
where $m$ is the current number of aggregates in $\Omega$. For an
$\varepsilon$-almost exactly lumpable partition, it needs to hold
that the entries of the vectors $\textrm{inc}(s)$ and $\textrm{inc}(s')$
are close together for $s,s'$ in the same aggregate $\sigma$. Actually, by \autoref{def:almost_exlump}, we have
that the current partition $\Omega$ is $\varepsilon$-almost
exactly lumpable if, and only if, $\norm{\textrm{inc}(s) - \textrm{inc}(s')}_1 \leq \varepsilon$.
If this is not the case, the algorithm therefore proceeds with
the refinement by partitioning the states in $\sigma \in \Omega$
into smaller aggregates such that $\norm{\textrm{inc}(s) - \textrm{inc}(s')}_1 \leq \varepsilon$
for two states $s, s'$ in the same aggregate in the resulting refined partition.

The procedure stops when an $\varepsilon$-almost
exactly lumpable partition is found, at the latest when every
aggregate consists of a single state. The refinement step
amounts to clustering points in $\bbR^{m}$ such that the maximal
$\norm{\cdot}_1$-distance between any pair of points in a cluster
is at most $\varepsilon$. To do this, we usually apply the hierarchical
clustering utilities offered by the SciPy Python package. However, to speed up
computations, we sometimes switch to a greedy clustering algorithm
proceeding as follows: we iterate over all the vectors which we want
to cluster. Every cluster is assigned a so-called anchor vector, and when processing
a new vector, we check if it has $\norm{\cdot}_1$-distance of at most
$\frac{\varepsilon}{2}$ to any of the anchor vectors of the clusters formed by
previously processed vectors. If this is the case, we assign the vector
to the cluster of the first such anchor vector which we find. Otherwise,
a new cluster with the current vector as anchor vector is created.

\begin{algorithm}[H]
  {\raggedright
  \hspace*{\algorithmicindent} \textbf{Input:} $\textrm{a Markov chain, defined via its transition matrix } P \textrm{ on state space }S\textrm{,}$ \\
  \hspace*{\algorithmicindent} \hphantom{\textbf{Input:}} $\textrm{and the parameter }\varepsilon\textrm{ (a generator matrix }Q\textrm{ can be used instead of }P\textrm{)}$ \\
  \hspace*{\algorithmicindent} \textbf{Output:} $\textrm{an aggregation function } \omega$ \\
  \hspace*{\algorithmicindent} \hphantom{\textbf{Output:}} $\textrm{whose corresponding partition is }\varepsilon\textrm{-almost exactly lumpable}$\\~}
	\begin{algorithmic}[1]
    \State $\omega^{(1)} \gets \left((s \in S) \mapsto 1\right)$ \Comment{aggregation function}
    \State $i \gets 1$ \Comment{iteration counter}
    \State $m \gets 1$ \Comment{number of aggregates}
    \Repeat
      \State $m_{\textrm{old}} \gets m$ \Comment{saves number of old aggregates}
      \State $m \gets 0$ \Comment{counts number of new aggregates}
      \ForAll{$j \in \{1, \ldots, m_{\textrm{old}}\}$} \Comment{loop over old/target aggregates}
        \ForAll{$s \in \{r \in S: \omega^{(i)}(r) = j\}$} \Comment{loop over states in same aggregate}
          \State $\textrm{inc}(s) \gets \vec{0} \in \bbR^{m_{\textrm{old}}}$
          \ForAll{$k \in \{1, \ldots, m_{\textrm{old}}\}$} \Comment{loop over potential splitter aggregates}
            \State $\textrm{inc}(s)_k \gets \sum_{r \in S: \omega^{(i)}(r) = k} P(r, s)$ \Comment{incoming probability from agg.~$k$ to state $s$}
          \EndFor
        \EndFor
        \State $C \gets \textrm{cluster}(\{r \in S: \omega^{(i)}(r) = j\}, \textrm{inc}, \varepsilon)$ \Comment{see below}
        \ForAll{$\sigma \in C$} \Comment{loop over clusters}
          \ForAll{$s \in \sigma$}
            \State{$\omega^{(i + 1)}(s) \gets m + 1$} \Comment{states in $\sigma$ are assigned to the same aggregate}
          \EndFor
          \State $m \gets m + 1$ \Comment{increment aggregate number}
        \EndFor
      \EndFor
      \State $i \gets i + 1$
    \Until{$m_{\textrm{old}} = m$} \Comment{stop when no aggregates were split}
    \State \Return $\omega^{(i)}$
	\end{algorithmic}
	\caption{Calculating almost exactly lumpable partitions}
	\label{alg:ealmostexlump}
\end{algorithm}

The method $\textrm{cluster}(T, f, \varepsilon)$ takes a subset of states
$T \subseteq S$, a function $f : T \to \bbR^k$ and a parameter $\varepsilon > 0$
as input. The output is a partition $C$ of $T$ such that for any
cluster $\sigma \in C$ and any two states $s, s' \in \sigma$, we have that
$\norm{f(s) - f(s')}_1 \leq \varepsilon$. Of course, the method should
try to return as few clusters as possible, but our Python
implementation does not guarantee an optimal solution. The method
\texttt{scipy.cluster.hierarchy.fclusterdata} is usually used to calculate the clustering.
We will see that this method performs well in our experiments, but we also sometimes
switch to the mentioned greedy strategy.

To conclude this section, we will briefly discuss the runtime of
\autoref{alg:ealmostexlump}. Denote by $m$ the number of aggregates
returned by the algorithm (which is not known in advance). The outer
loop (lines 4 to 23) runs through at most $m$ iterations. The loops
in lines 7 to 8 lead to $n$ executions of the inner loop on lines 10 to
12. Lines 10 to 12, in turn, run in time $\calO(n)$ since the loop in
line 10 iterates over all aggregates, and line 11 then calculates a sum
over all states in the respective aggregate. Therefore, the loops in
lines 7 to 13 contribute a runtime of $\calO(n^2)$ per iteration of the
outer loop.

The runtime of line 14 depends on the clustering algorithm which is used.
In our implementation, \texttt{scipy.cluster.hierarchy.fclusterdata} runs in time $\calO(mn^2)$
because it gets at most $n$ vectors, one per state, as
input\footnote{for the runtime, see the SciPy documentation at \url{https://docs.scipy.org/doc/scipy/reference/generated/scipy.cluster.hierarchy.linkage.html}}
and because the vectors are of dimension at most $m$. Since
line 14 is executed within the loop on line 7 (which does at most
$m$ iterations), this contributes a runtime of $\calO(m^2n^2)$ per iteration of the
outer loop (larger than the $\calO(n^2)$ of lines 7 to 13). Lines 15 to 20 run in $\calO(n)$, so these are faster than
line 14 and do not add to the runtime. As the outer loop runs at most $m$ times,
we arrive at a total runtime of $\calO(m^3n^2)$.

Comparing with the runtimes for exact calculation of transient distributions
$p_k$ (respectively $p_t$),
we see that, asymptotically for large $k$ and $n$, applying \autoref{alg:ealmostexlump} makes sense if
$m^3 \ll k$ (respectively $m^3 \ll qt$ where $q$ is the maximal
exit rate of all states in the CTMC). However, the runtime bound of
$\calO(m^3n^2)$ seems to be rarely achieved in practice, so \autoref{alg:ealmostexlump}
can still be useful if we do not have $m^3 \ll k$.

If we implement the greedy clustering instead of using \texttt{scipy.cluster.hierarchy.fclusterdata},
then the clustering takes time $\calO(m^2n)$ (because only distances
to cluster anchor vectors are computed, instead of distances to all vectors),
reducing the overall runtime to $\calO(m^4n)$ (again, in practice, the algorithm
seemed to be significantly faster than that -- for example, we usually
observed that the outer loop of \autoref{alg:ealmostexlump} was executed
a number of times which was significantly lower than $m$, e.g.~by a factor of 100).

\subsection{Alternating least deviations}

In this section, we quickly analyse an approach to choose $A$, $\Pi$ and $\pi_0$ in
such a way that the bound given in \autoref{thm:dtmc_bound} \ref{thm:dtmc_bound_imprecise}
is small. A similar approach would apply to CTMCs -- however, the DTMC approach
did not perform well compared to the other methods, so we only treat it briefly. Recall that:
\begin{gather}
  \notag\textrm{If } P, \Pi \textrm{ are stochastic, } p_0, \pi_0 \textrm{ are probability distributions, and } A \textrm{ is an arbitrary matrix, then } \\
  \label{eq:dtmc_imprecise_bnd}\norm{e_k}_1 \leq \norm{\pi_0^{\transp} A - p_0^{\transp}}_1 + k \cdot \norm{\Pi A - A P}_\infty \\
  \notag\textrm{where } e_k^{\transp} = \pi_0^{\transp} \Pi^k A - p_0^{\transp} P^k \textrm{.}
\end{gather}
In this subsection, we will actually consider the more abstract view of
aggregation, with the restriction that $\Pi$ should be stochastic and that
$\pi_0$ should be a probability distribution. The matrices
$A$, $\Pi$ and $\pi_0$ define an abstract aggregation, and we do not have the
partition $\Omega$ of the state space into aggregates.

In order to obtain a good approximation of the transient distributions,
we would like to minimize the right hand side of \eqref{eq:dtmc_imprecise_bnd}.
We try to solve this optimization problem approximately via an alternating
least absolute deviations scheme, similar to the alternating least squares method
which is (e.g.) presented in \cite{approxaggregaltleastsqu}. The procedure
is as follows:
\begin{enumerate}
  \item Determine the parameter $m$ (the dimension of the aggregated state space).
  A viable approach might be to try different $m$ and see if the resulting
  aggregation have error bounds which are low enough.
  \item Initialize $A$, e.g.~randomly.
  \item \label{it:fixedAopt} Fix $A$ and choose $\Pi, \pi_0$ such that \eqref{eq:dtmc_imprecise_bnd} is
  minimal with respect to all possible choices of $\Pi, \pi_0$.
  \item \label{it:fixedPiopt} Fix $\Pi, \pi_0$ and choose $A$ such that \eqref{eq:dtmc_imprecise_bnd} is
  minimal with respect to all possible choices of $A$. Go to step \ref{it:fixedAopt}.
  \item Stop when the improvement obtained in the last iteration is smaller than
  a given threshold or when a maximal iteration count is reached.
\end{enumerate}
Steps \ref{it:fixedAopt} and \ref{it:fixedPiopt} can be solved with
linear programming (we will skip the details here). In our experiments,
even on very small Markov chains (less than $100$ states), the quality of
the result seemed to depend very much on the initial values chosen for
$A$. At the same time, the runtime is quite high because linear
programs need to be solved in every iteration. As we observed a tendency
to get stuck in local minima, the high runtime did not seem to justify using
such an approach. However, it could still be that there exists some efficient
way to initialize $A$ which we did not find and which would result in this alternating
least deviations algorithm finding a good aggregation.

We also applied a descent algorithm
which was directly trying to minimize \eqref{eq:dtmc_imprecise_bnd}. Again,
the result depended very much on the initial values. We therefore suspect
that there are too many local minima of the function given by the right
hand side of \eqref{eq:dtmc_imprecise_bnd} to use a simple function minimization
algorithm, and we did not further investigate this approach.

\subsection{Experiments}
\label{ssec:experiments}

In this section, we will compare the performance of SVDsgn, SVDseba,
SVDdir (\autoref{alg:svddir}), and \autoref{alg:ealmostexlump} on a selection
of Markov chains.
By performance comparison, we mean comparing the error bounds given
by $\norm{\Pi A - A P}_\infty$ (or $\norm{\Theta A - A Q}_\infty$) resulting from the aggregations returned by the
different algorithms -- the lower, the better. We will not compare initial
errors, as neither the SVD algorithms nor \autoref{alg:ealmostexlump}
take into account the initial distribution. The only approach presented
here which took into account initial distributions was the alternating
least deviations procedure, which we don't treat in this section due to
its infeasibility for large state spaces, and due to the resulting error
bounds which were very high. The development of efficient algorithms taking into
account both initial and dynamical error is left for future work.

We first look at the setting for which the SVD algorithm variants were
designed: almost aggregatable Markov chains. These are easy to generate randomly
and we can compare the performance of the different algorithms. Afterwards, we will
see some of the examples used in \cite{adaptformalagg} as well as
an example derived from a stochastic process algebra
model which allows for an exactly lumpable partition,
so we will also see a setting for which \autoref{alg:ealmostexlump}
was designed. By default, we will calculate the $\alpha$ distributions
as in \eqref{eq:alphaprop}, and $\Pi$ (or $\Theta$)
will be set to $\Pi = A P \Lambda$ (or $\Theta = A Q \Lambda$).

For some of the models, we used faster versions of SVDdir (\autoref{alg:svddir}) and \autoref{alg:ealmostexlump}
as already briefly mentioned before: for the fast variant of SVDdir, instead of using \eqref{eq:svd_cutoff}
to determine the number of singular values considered, we simply pass different fixed
numbers of singular values to be calculated to the algorithm which allows us
to bound the runtime (as we only need to compute a partial decomposition), and
which will result in different aggregations depending on the number passed.
The fast variant of \autoref{alg:ealmostexlump} switches to the greedy
clustering method described in the paragraph just before \autoref{alg:ealmostexlump}
when $d^2m > 4 \cdot 10^6$, where $d$ is the number of vectors (of incoming probabilities) to be clustered
and $m$ is the current number of aggregates (other choices than $4 \cdot 10^6$ are of course possible, depending
on how much one wants to bound the runtime). In line 14 of \autoref{alg:ealmostexlump},
the algorithm will thus dynamically decide which clustering method to use
depending on whether \texttt{scipy.cluster.hierarchy.fclusterdata} is expected
to have a high runtime. The figures using the fast algorithm variants
will have captions specifying that SVDdir (fast) or \autoref{alg:ealmostexlump} (fast) were used.

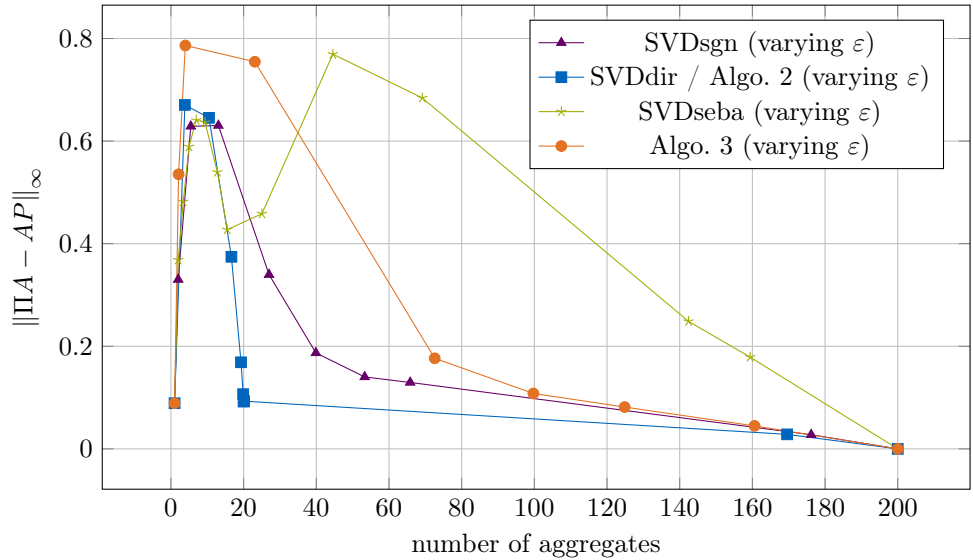
\begin{figure}[htb]
  \begin{center}
    \begin{tikzpicture}
      \begin{axis}[
        xlabel={number of aggregates},
        ylabel={$\norm{\Pi A - A P}_\infty$},
        title style={align=center, text width=10.00cm},
        height=8.00cm,
        width=13.00cm,
        grid=major,
        legend pos=north east
      ]
        \addplot[mark=triangle*,color=commentpurple] coordinates {
          (1.0000, 0.0890) (2.0000, 0.3301) (5.4600, 0.6288) (13.0600, 0.6304) (27.0000, 0.3396) (39.8900, 0.1871) (53.3200, 0.1402) (65.8400, 0.1295) (176.1800, 0.0276) (200.0000, 0.0000)
        };
        \addlegendentry{SVDsgn (varying $\varepsilon$)}
        \addplot[mark=square*,color=sectionblue] coordinates {
          (1.0000, 0.0890) (3.8300, 0.6705) (10.4700, 0.6454) (16.6400, 0.3744) (19.2800, 0.1688) (19.8800, 0.1067) (19.9900, 0.0921) (20.1300, 0.0930) (169.5500, 0.0283) (200.0000, 0.0000)
        };
        \addlegendentry{SVDdir / \hyperref[alg:svddir]{Algo.~\ref{alg:svddir}} (varying $\varepsilon$)}
        \addplot[mark=star,color=tumGreen] coordinates {
          (1.0000, 0.0890) (2.0000, 0.3683) (3.3800, 0.4818) (4.9600, 0.5891) (6.9500, 0.6407) (9.3900, 0.6366) (12.7800, 0.5394) (15.5000, 0.4273) (25.0400, 0.4584) (44.5600, 0.7695) (69.1600, 0.6842) (142.4400, 0.2485) (159.3900, 0.1789) (200, 0)
        };
        \addlegendentry{SVDseba (varying $\varepsilon$)}
        \addplot[mark=*,color=tumOrange] coordinates {
          (1.0000, 0.0890) (2.1000, 0.5353) (3.9600, 0.7864) (23.0900, 0.7547) (72.5900, 0.1766) (99.7300, 0.1080) (124.8600, 0.0815) (160.6000, 0.0448) (200.0000, 0.0000)
        };
        \addlegendentry{\hyperref[alg:ealmostexlump]{Algo.~\ref{alg:ealmostexlump}} (varying $\varepsilon$)}
      \end{axis}
    \end{tikzpicture}
  \end{center}
  \caption{SVDsgn, SVDdir, SVDseba and \autoref{alg:ealmostexlump} executed on 100 randomly
  generated almost aggregatable DTMCs with 200 states, 20 aggregates
  and a probability of 0.5 to have no transition between a particular
  pair of aggregates. The almost aggregatable DTMCs were obtained by
  random perturbation (with a magnitude of $0.002$) of the transition
  matrix of a randomly generated aggregatable DTMC.
  Each plotted point is an average resulting from running the algorithms
  with a particular fixed input parameter $\varepsilon$ on the 100 DTMCs.}
  \label{fig:almost_agg}
\end{figure}

In \autoref{fig:almost_agg}, we consider a range of randomly generated
almost aggregatable DTMCs. $\norm{\Pi A - A P}_\infty$ of the aggregation returned by the algorithms
(run with different input parameters) is plotted against the number of
aggregates which are found (which depends on the input parameter $\varepsilon$).
We can see that the SVD variants (except for SVDseba) perform
better than \autoref{alg:ealmostexlump} for these almost aggregatable chains,
which is no surprise. In addition, the improved stability of SVDdir
clearly pays off in comparison to SVDsgn: We can see a sharp drop in
the error bounds around 20 aggregates, which was the number of aggregates
in the almost aggregatable partition. For SVDsgn, the drop is more a
gradual decrease in the error bounds. \autoref{alg:ealmostexlump} does
not identify the almost aggregatable partition and only reaches a similar
error bound level for around 120 aggregates.

Note that the error bound
of around $0.09$ for a single aggregate (the leftmost point in the plots in
\autoref{fig:almost_agg}) means that the stationary distribution is very
close to the $\alpha$ distribution obtained via \eqref{eq:alphaprop}, as we have $\Pi = \begin{pmatrix}
  1
\end{pmatrix} \in \bbR^{1 \times 1}$ in this case and hence
\begin{align*}
  0.09 \approx \norm{\Pi A - A P}_\infty
  = \norm{\alpha_{\{S\}}^{\transp} - \alpha_{\{S\}}^{\transp} P}_\infty
\end{align*}
So $\alpha_{\{S\}}$ is close to being stationary. This is no big surprise
for randomly generated Markov chains as these tend to have stationary
distributions which are closer to the uniform distribution wehn compared
to Markov chains with more structure.

SVDseba performs similarly to
SVDdir for a low number of aggregates, but there is a sudden change
around 20 aggregates when SVDseba starts to perform worse than
all other algorithms. This is due to the fact that we limited the
maximum number of iterations of the SEBA algorithm (see \cite[Algorithm 3.1]{seba},
we took the MATLAB code given in \cite{seba} and translated it into Python)
to 300 iterations because of its high runtime. Regardless of the number
of maximum iterations, we could never observe SVDseba performing significantly
better than SVDdir in all our experiments. The latter is therefore a good alternative.
The better performance of SVDdir might be due to its specificity for the
given problem. The SEBA algorithm only tries to find a sparse basis for
the row space of the first $m$ rows of the matrix $V$ in the singular
value decomposition. It was designed for general applications, and does
not exploit the fact that the vectors $v(r)$ and $v(s)$ are approximate
multiples of one another for almost aggregatable DTMCs when $r$ and $s$ belong to the same aggregate.

In the following experiments, we will focus on SVDdir as the best compromise
between numerical stability and speed among the SVD approaches. We looked
at some of the models which were already used in the experimental section
of \cite{adaptformalagg} where the error bounds which were further developed
in this paper were first introduced. In \cite{adaptformalagg}, a very simple
aggregation strategy was used: the aggregation algorithm started with
singleton clusters (every state is its own cluster), then iterated over
all transitions in the model, in descending order of the transition probability,
and merged the clusters of the two connected states, unless this would
result in clusters above a user-defined maximal cluster size. This procedure
will in general result in a very high error bound $\norm{\Pi A - A P}_\infty$,
but \cite{adaptformalagg} then used a dynamic aggregate-splitting approach
while calculating transient distributions. The precalculated clusters were
stored during the whole computation, but whenever a certain cluster
had a transient probability above a given threshold at some point during
the time evolution, it was split into singletons. The singletons were merged again
into the old precalculated cluster if
the overall probability of being in that cluster would fall below the
threshold at a later point. As can be seen by looking at \autoref{thm:dtmc_bound} \ref{thm:dtmc_bound_precise},
this will also result in a low overall error bound, as the scalar product
will only become large if both the transient probability of an aggregate
and the error caused by that aggregate are large. In this paper, we
wanted to analyse whether a significant state space reduction is
already possible without dynamic aggregate-splitting if we use different
aggregation algorithms.

\begin{figure}[htb]
  \begin{center}
    \begin{tikzpicture}
      \begin{axis}[
        xlabel={number of aggregates},
        ylabel={$\norm{\Pi A - A P}_\infty$},
        title={Prokaryotic gene model (44k states)},
        title style={align=center, text width=10.00cm},
        height=8.00cm,
        width=13.00cm,
        grid=major,
        legend pos=north east
      ]
        \addplot[mark=square*,color=sectionblue] coordinates {
          (15.000000, 0.209718) (37.000000, 0.225960) (84.000000, 0.238598) (178.000000, 0.237885) (356.000000, 0.298907) (796.000000, 0.359471) (2262.000000, 0.508456) (4976.000000, 0.490453) (8122.000000, 0.494386)
        };
        \addlegendentry{SVDdir / \hyperref[alg:svddir]{Algo.~\ref{alg:svddir}} (fast)}
        \addplot[mark=square*,mark options={solid},color=tumGreen,dashed] coordinates {
          (15.000000, 0.506823) (37.000000, 0.522009) (84.000000, 0.541960) (178.000000, 0.538081) (356.000000, 0.609863) (796.000000, 0.681218) (2262.000000, 0.898290) (4976.000000, 0.836084) (8122.000000, 1.039987)
        };
        \addlegendentry{SVDdir / \hyperref[alg:svddir]{Algo.~\ref{alg:svddir}} (fast, uniform $\alpha$)}
        \addplot[mark=*,color=tumOrange] coordinates {
          (1.000000, 0.157307) (9.000000, 0.549811) (34.000000, 0.564369) (161.000000, 0.622178) (1620.000000, 0.503268) (3360.000000, 0.611461) (7432.000000, 0.447325) (12665.000000, 0.300710) (21075.000000, 0.125383) (25035.000000, 0.067493) (30476.000000, 0.005959) (31755.000000, 0.005453) (36653.000000, 0.003814) (43957.000000, 0.000000)
        };
        \addlegendentry{\hyperref[alg:ealmostexlump]{Algo.~\ref{alg:ealmostexlump}} (fast)}
        \addplot[mark=*,mark options={solid},color=linkred,dashed] coordinates {
          (1.000000, 0.400793) (9.000000, 0.550131) (34.000000, 0.561454) (161.000000, 0.622629) (1620.000000, 0.503255) (3360.000000, 0.612153) (7432.000000, 0.443399) (12665.000000, 0.300668) (21075.000000, 0.125319) (25035.000000, 0.067461) (30476.000000, 0.005959) (31755.000000, 0.005453) (36653.000000, 0.003814) (43957.000000, 0.000000)
        };
        \addlegendentry{\hyperref[alg:ealmostexlump]{Algo.~\ref{alg:ealmostexlump}} (fast, uniform $\alpha$)}
      \end{axis}
    \end{tikzpicture}
  \end{center}
  \caption{SVDdir and \autoref{alg:ealmostexlump} executed on a
  prokaryotic gene expression model already used in the experiments in
  \cite{adaptformalagg}, originally from \cite{prokgeneexpr}. The maximum population size
  was set to 5, resulting in 43~957 states. The CTMC was uniformised
  using the maximal exit rate $16.78$ as uniformisation rate.}
  \label{fig:gene_model}
\end{figure}
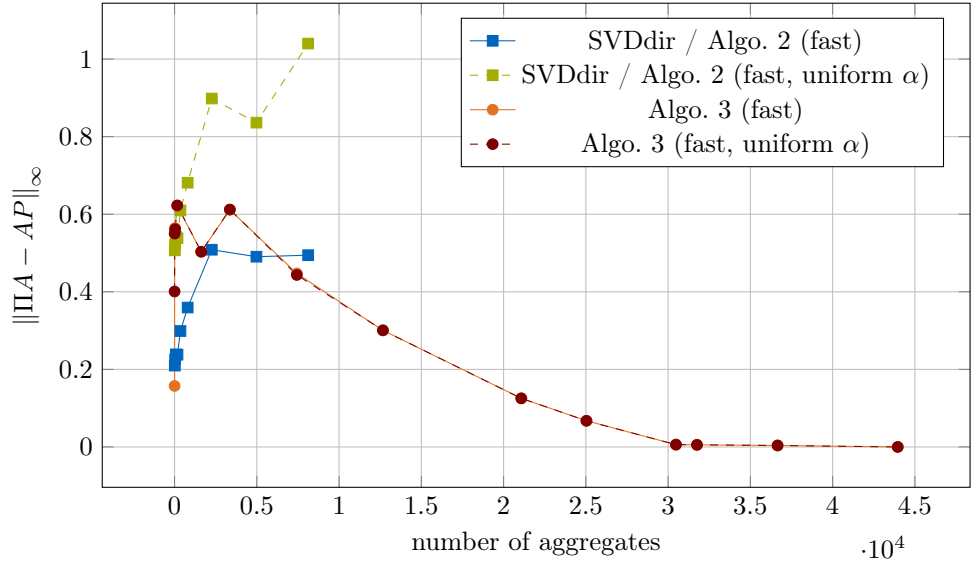

In \autoref{fig:gene_model}, we considered a prokaryotic gene expression
model \cite{prokgeneexpr} with a state space of 44k states, which is the limit of what the
fast SVDdir algorithm could handle on the machine which we used in all
our experiments (single-threaded execution on an Intel Core i7-1260P
CPU with a maximum frequency of 4.7 GHz). The graph of the SVDdir algorithm
stops at around 8k aggregates due to its runtime of $\calO(n^2l)$ (with $n$
the number of states and $l$ the number of singular values considered -- the more
values considered, the more aggregates the algorithm will usually return)
which did not allow us to compute more points. We can see that
\autoref{alg:ealmostexlump} can find state space reductions down to
around 30k states with very low error bounds (the exact values here are 30~476 aggregates with
$\norm{\Pi A - A P}_\infty \approx 0.0060$) which corresponds to a state space reduction
to around 70\%. Below that, the error bound starts to increase gradually,
resulting in aggregations were the error bounds get too high to be useful
in practice. SVDdir performed only slightly better for a low number of
aggregates, but the resulting error bounds are still too high for practical
purposes.

\autoref{fig:gene_model} also compares two ways of setting the $\alpha$
distributions: the default method of \eqref{eq:alphaprop} and setting
$\alpha(s) = \frac{1}{\abs{\omega(s)}}$, resulting in uniform $\alpha$
distributions and referred to as uniform $\alpha$ in the remaining paper.
The method makes no difference to the error bound achieved by
\autoref{alg:ealmostexlump} which can be explained by the fact that
\autoref{alg:ealmostexlump} anyway identifies aggregates where the sums
of incoming probabilities used in \eqref{eq:alphaprop} are close to being
identical (note that the orange and red dashed curve are almost exactly
on top of each other in \autoref{fig:gene_model}). However, the error bound achieved by SVDdir (\autoref{alg:svddir})
is much better if \eqref{eq:alphaprop} is used, because SVDdir, in contrast
to \autoref{alg:ealmostexlump}, can also detect aggregations where the
$\alpha$ distributions are not uniform.

\begin{figure}[htb]
  \begin{center}
    \begin{tikzpicture}
      \begin{axis}[
        xlabel={number of aggregates},
        ylabel={$\norm{\Pi A - A P}_\infty$},
        title={Lotka-Volterra model (10k states)},
        title style={align=center, text width=10.00cm},
        height=8.00cm,
        width=13.00cm,
        grid=major,
        legend pos=north east
      ]
        \addplot[mark=square*,color=sectionblue] coordinates {
          (126.000000, 1.440325) (202.000000, 0.721608) (546.000000, 1.015005) (2258.000000, 0.898579) (8968.000000, 1.697105) (10040.000000, 0.589484)
        };
        \addlegendentry{SVDdir / \hyperref[alg:svddir]{Algo.~\ref{alg:svddir}} (fast)}
        \addplot[mark=square*,mark options={solid},color=tumGreen,dashed] coordinates {
          (126.000000, 1.457281) (202.000000, 0.814348) (546.000000, 1.122734) (2258.000000, 1.013539) (8968.000000, 1.697240) (10040.000000, 0.654360)
        };
        \addlegendentry{SVDdir / \hyperref[alg:svddir]{Algo.~\ref{alg:svddir}} (fast, uniform $\alpha$)}
        \addplot[mark=*,color=tumOrange] coordinates {
          (1.000000, 0.009197) (1.000000, 0.009197) (142.000000, 0.358564) (148.000000, 0.306097) (158.000000, 0.253308) (168.000000, 0.200518) (178.000000, 0.147727) (185.000000, 0.094289) (5281.000000, 0.087622) (7903.000000, 0.038096) (9484.000000, 0.016191) (9972.000000, 0.005716) (9898.000000, 0.003571) (10201.000000, 0.000000)
        };
        \addlegendentry{\hyperref[alg:ealmostexlump]{Algo.~\ref{alg:ealmostexlump}} (fast)}
        \addplot[mark=*,mark options={solid},color=linkred,dashed] coordinates {
          (1.000000, 0.009215) (1.000000, 0.009215) (142.000000, 0.358564) (148.000000, 0.306097) (158.000000, 0.253308) (168.000000, 0.200517) (178.000000, 0.147727) (185.000000, 0.094331) (5281.000000, 0.087661) (7903.000000, 0.038113) (9484.000000, 0.016198) (9972.000000, 0.005717) (9898.000000, 0.003573) (10201.000000, 0.000000)
        };
        \addlegendentry{\hyperref[alg:ealmostexlump]{Algo.~\ref{alg:ealmostexlump}} (fast, uniform $\alpha$)}
      \end{axis}
    \end{tikzpicture}
  \end{center}
  \caption{SVDdir and \autoref{alg:ealmostexlump} executed on the
  Lotka-Volterra model already used in the experiments in
  \cite{adaptformalagg}, described in more detail e.g.~in \cite{stochsimcouplchem}. The maximum number of species
  was set to 100, resulting in 10~201 states. The CTMC was uniformised
  using the maximal exit rate $2078$ as uniformisation rate.}
  \label{fig:lotvol_model}
\end{figure}
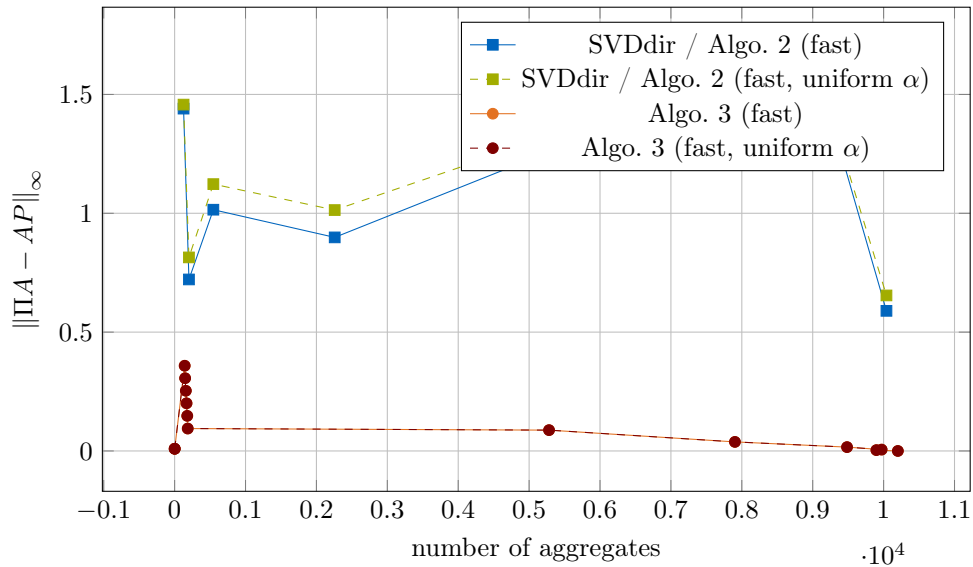

In \autoref{fig:lotvol_model}, we consider the Lotka-Volterra model (see, e.g., \cite{stochsimcouplchem})
which can be used to represent the number of predators and prey in
a simple setting. We can see that the error bounds achieved
by \autoref{alg:ealmostexlump} are comparatively low while using SVDdir
does not result in very helpful aggregations in this case. Also note that
$\norm{\Pi A - A P}_\infty \approx 0.0092$ at a single aggregate (i.e.~all states in one aggregate),
which is quite low. As discussed before, this implies that the stationary distribution is very
close to the distribution $\alpha_{\{S\}}$, and $\alpha_{\{S\}}$ is
the uniform distribution if we use uniform $\alpha$. This reduction is
only useful in practice if the initial distribution is close to the
uniform distribution. A better error bound is only achieved at
an aggregation with 9~972 aggregates and $\norm{\Pi A - A P}_\infty \approx 0.0057$
which is only a state space reduction to 98\% of the original size.

\begin{figure}[htb]
  \begin{center}
    \begin{tikzpicture}
      \begin{axis}[
        xlabel={number of aggregates},
        ylabel={$\norm{\Pi A - A P}_\infty$},
        title={Workstation cluster model (15k states)},
        title style={align=center, text width=10.00cm},
        height=8.00cm,
        width=13.00cm,
        grid=major,
        legend pos=north east
      ]
        \addplot[mark=square*,color=sectionblue] coordinates {
          (225.000000, 0.182584) (338.000000, 0.181834) (377.000000, 0.230695) (400.000000, 0.137703) (1088.000000, 0.257146) (1641.000000, 0.265994) (2895.000000, 0.313999)
        };
        \addlegendentry{SVDdir / \hyperref[alg:svddir]{Algo.~\ref{alg:svddir}} (fast)}
        \addplot[mark=square*,mark options={solid},color=tumGreen,dashed] coordinates {
          (225.000000, 0.587891) (338.000000, 0.620879) (377.000000, 0.657545) (400.000000, 0.447452) (1088.000000, 0.642780) (1641.000000, 0.656123) (2895.000000, 0.655926)
        };
        \addlegendentry{SVDdir / \hyperref[alg:svddir]{Algo.~\ref{alg:svddir}} (fast, uniform $\alpha$)}
        \addplot[mark=*,color=tumOrange] coordinates {
          (2.000000, 0.376450) (3.000000, 0.726932) (6.000000, 0.336626) (8.000000, 0.336626) (11.000000, 0.526975) (11.000000, 0.093032) (11.000000, 0.093032) (11.000000, 0.093032) (433.000000, 0.075162) (2537.000000, 0.004125) (2539.000000, 0.005468) (2778.000000, 0.004125) (4099.000000, 0.004125) (5333.000000, 0.000439) (5433.000000, 0.000240) (5523.000000, 0.000000)
        };
        \addlegendentry{\hyperref[alg:ealmostexlump]{Algo.~\ref{alg:ealmostexlump}} (fast)}
        \addplot[mark=*,mark options={solid},color=linkred,dashed] coordinates {
          (2.000000, 0.155710) (3.000000, 0.727021) (6.000000, 0.081164) (8.000000, 0.073851) (11.000000, 0.070570) (11.000000, 0.022498) (11.000000, 0.022498) (11.000000, 0.022498) (433.000000, 0.075162) (2537.000000, 0.002219) (2539.000000, 0.005461) (2778.000000, 0.002219) (4099.000000, 0.002219) (5333.000000, 0.000439) (5433.000000, 0.000240) (5523.000000, 0.000000)
        };
        \addlegendentry{\hyperref[alg:ealmostexlump]{Algo.~\ref{alg:ealmostexlump}} (fast, uniform $\alpha$)}
      \end{axis}
    \end{tikzpicture}
  \end{center}
  \caption{SVDdir and \autoref{alg:ealmostexlump} executed on a
  workstation cluster model already used in the experiments in
  \cite{adaptformalagg}, originally from \cite{modcheckdependability}. The number of workstations per cluster
  was set to 20, resulting in 15~540 states. The CTMC was uniformised
  using the maximal exit rate $50.08$ as uniformisation rate.}
  \label{fig:workcluster_model}
\end{figure}
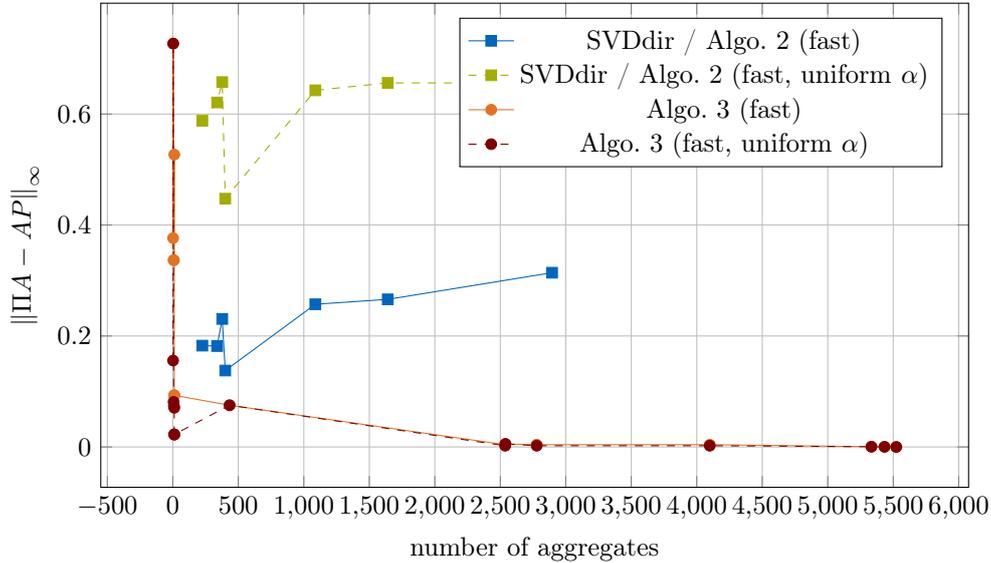

In \autoref{fig:workcluster_model}, a model of two
workstation clusters is considered where each workstation can break down
and a repair unit can repair failed components \cite{modcheckdependability}. \autoref{alg:ealmostexlump} finds
an aggregation with $\norm{\Pi A - A P}_\infty \approx 0.0022$ at 2~537 aggregates (reduction to 16\%),
and one with $\norm{\Pi A - A P}_\infty = 0$ at 5~523 aggregates (reduction to 36\%)
which could both be useful in practice. The latter is an exactly lumpable aggregation
which could already be inferred from the high-level model specification. The many points on an almost vertical line close to a single aggregate
arise from aggregations with few aggregates found by \autoref{alg:ealmostexlump} which
differ only by one or two in the number of aggregates but result in vastly different
error bounds. SVDdir (\autoref{alg:svddir}) performed
better than in \autoref{fig:lotvol_model}, but the error bounds are still quite high.

\begin{figure}[htb]
  \begin{center}
    \begin{tikzpicture}
      \begin{axis}[
        xlabel={number of aggregates},
        ylabel={$\norm{\Pi A - A P}_\infty$},
        title={RSVP model (842 states)},
        title style={align=center, text width=10.00cm},
        height=8.00cm,
        width=13.00cm,
        grid=major,
        legend pos=north east
      ]
        \addplot[mark=square*,color=sectionblue] coordinates {
          (23.000000, 0.671707) (41.000000, 0.880542) (105.000000, 0.702694) (284.000000, 0.623496) (643.000000, 0.333014) (777.000000, 0.101332) (778.000000, 0.002992)
        };
        \addlegendentry{SVDdir / \hyperref[alg:svddir]{Algo.~\ref{alg:svddir}}}
        \addplot[mark=square*,mark options={solid},color=tumGreen,dashed] coordinates {
          (23.000000, 1.527725) (41.000000, 1.228579) (105.000000, 1.158141) (284.000000, 1.217595) (643.000000, 0.844718) (777.000000, 0.740346) (778.000000, 0.666778)
        };
        \addlegendentry{SVDdir / \hyperref[alg:svddir]{Algo.~\ref{alg:svddir}} (uniform $\alpha$)}
        \addplot[mark=*,color=tumOrange] coordinates {
          (3.000000, 0.374301) (13.000000, 0.818800) (19.000000, 0.764901) (20.000000, 0.764901) (26.000000, 0.677599) (26.000000, 0.677599) (28.000000, 0.677599) (86.000000, 0.518288) (110.000000, 0.191048) (123.000000, 0.109732) (123.000000, 0.109732) (123.000000, 0.109732) (123.000000, 0.109732) (233.000000, 0.000526)
        };
        \addlegendentry{\hyperref[alg:ealmostexlump]{Algo.~\ref{alg:ealmostexlump}}}
        \addplot[mark=*,mark options={solid},color=linkred,dashed] coordinates {
          (3.000000, 0.382635) (13.000000, 0.343923) (19.000000, 0.282098) (20.000000, 0.282098) (26.000000, 0.282098) (26.000000, 0.282098) (28.000000, 0.274442) (86.000000, 0.100893) (110.000000, 0.057742) (123.000000, 0.001887) (123.000000, 0.001887) (123.000000, 0.001887) (123.000000, 0.001887) (233.000000, 0.000500)
        };
        \addlegendentry{\hyperref[alg:ealmostexlump]{Algo.~\ref{alg:ealmostexlump}} (uniform $\alpha$)}
      \end{axis}
    \end{tikzpicture}
  \end{center}
  \caption{SVDdir and \autoref{alg:ealmostexlump} executed on the DTMC arising when uniformising the RSVP model
  with $M = 7$, $N = 5$ and $3$ mobile nodes, resulting in a total of $842$
  states. The uniformisation rate was set to the maximal exit rate among all states,
  which is $30.01$ in this case.}
  \label{fig:rsvp_dtmc}
\end{figure}
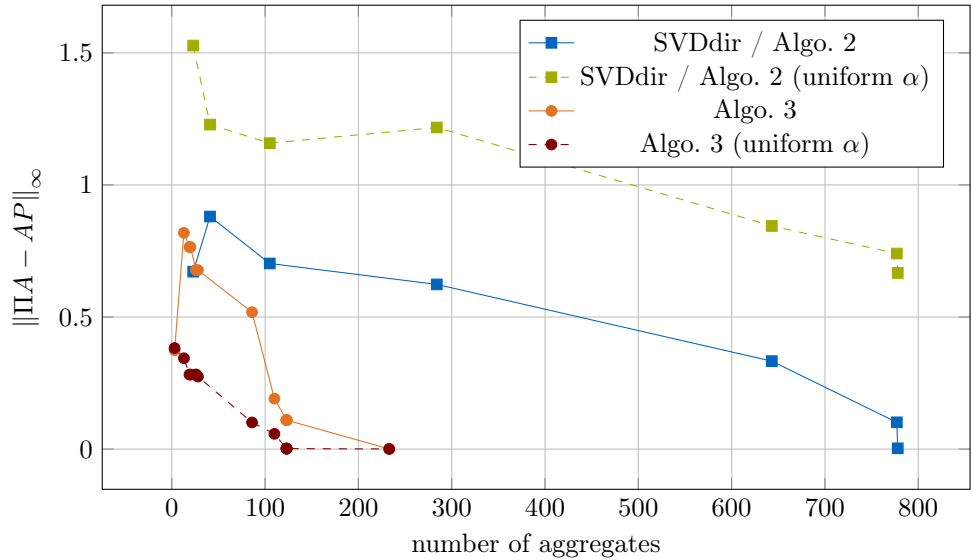 

Next to the models from \cite{adaptformalagg,prokgeneexpr,stochsimcouplchem,modcheckdependability},
we also considered a compositional stochastic process algebra model,
the RSVP model from \cite{rsvp}, at which we take a closer look. It comprises a lower network channel submodel
with capacity for $M$ calls, an upper network channel submodel with capacity for
$N$ calls, and a number of identical mobile nodes which request resources
for calls at a constant rate. Due to the mobile node symmetry in the model specification,
a lossless reduction is possible for this model. Comparing the different
algorithms in \autoref{fig:rsvp_dtmc}, we see that only \autoref{alg:ealmostexlump}
identifies the partition which results in a lossless reduction and which
is exactly lumpable: the
error bound $\norm{\Pi A - A P}_\infty$ is equal to $0$ for $234$ aggregates.
SVDdir performs much worse. \autoref{alg:ealmostexlump}
also finds a reduction to 123 aggregates (reduction to 15\%) at an error of
$\norm{\Pi A - A P}_\infty \approx 0.0019$.

\begin{figure}[htb]
  \begin{center}
    \begin{tikzpicture}
      \begin{axis}[
        xlabel={number of aggregates},
        ylabel={$\norm{\Theta A - A Q}_\infty$},
        title={RSVP model (842 states)},
        title style={align=center, text width=10.00cm},
        height=8.00cm,
        width=13.00cm,
        grid=major,
        legend pos=north east
      ]
        \addplot[mark=*,color=tumOrange] coordinates {
          (27.0000, 19.7667) (28.0000, 19.7667) (31.0000, 19.7667) (82.0000, 17.9266) (87.0000, 17.9266) (110.0000, 5.8330) (123.0000, 4.2418) (126.0000, 4.5281) (200.0000, 3.8068) (234.0000, 0.0000)
        };
        \addlegendentry{\hyperref[alg:ealmostexlump]{Algo.~\ref{alg:ealmostexlump}}}
        \addplot[mark=diamond*,color=tumDarkBlue] coordinates {
          (27.0000, 19.5271) (28.0000, 19.5271) (31.0000, 19.5271) (82.0000, 16.4346) (87.0000, 16.4346) (110.0000, 4.4900) (123.0000, 3.9709) (126.0000, 4.2203) (200.0000, 2.8572) (234.0000, 0.0000)
        };
        \addlegendentry{\hyperref[alg:ealmostexlump]{Algo.~\ref{alg:ealmostexlump}} (median)}
        \addplot[mark=*,mark options={solid},color=linkred,dashed] coordinates {
          (27.0000, 8.4658) (28.0000, 8.2360) (31.0000, 8.2360) (82.0000, 3.6167) (87.0000, 3.1362) (110.0000, 1.7328) (123.0000, 0.0566) (126.0000, 0.0382) (200.0000, 0.0240) (234.0000, 0.0000)
        };
        \addlegendentry{\hyperref[alg:ealmostexlump]{Algo.~\ref{alg:ealmostexlump}} (uniform $\alpha$)}
        \addplot[mark=diamond*,mark options={solid},color=commentpurple,dashed] coordinates {
          (27.0000, 6.0100) (28.0000, 6.0100) (31.0000, 6.0100) (82.0000, 2.0260) (87.0000, 2.0260) (110.0000, 1.6399) (123.0000, 0.0340) (126.0000, 0.0340) (200.0000, 0.0200) (234.0000, 0.0000)
        };
        \addlegendentry{\hyperref[alg:ealmostexlump]{Algo.~\ref{alg:ealmostexlump}} (uniform $\alpha$, median)}
      \end{axis}
    \end{tikzpicture}
  \end{center}
  \caption{\autoref{alg:ealmostexlump} executed on the CTMC arising from the RSVP model
  with $M = 7$, $N = 5$ and $3$ mobile nodes, resulting in a total of $842$
  states.}
  \label{fig:rsvp_ctmc}
\end{figure}
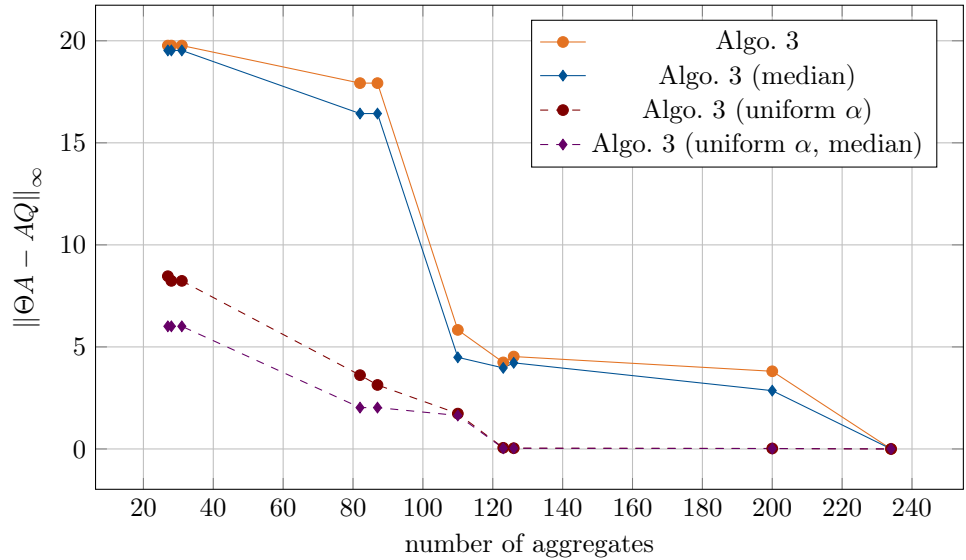

In \autoref{fig:rsvp_ctmc}, we apply \autoref{alg:ealmostexlump} directly
to the CTMC corresponding to the RSVP model and compare setting $\Theta = A Q \Lambda$ with the median-based
scheme from \autoref{ssec:median}. The error bounds are similar
in magnitude for the usual way of setting $\Theta = A Q \Lambda$
and for the median-based scheme. But we also see that the error bounds are
much higher than for the uniformised version (compare with \autoref{fig:rsvp_dtmc}). This is because $\norm{\Pi A - A P}_\infty$,
where $P$ and $\Pi$ are the uniformised versions of $Q$ and $\Theta$, corresponds to $\frac{1}{q}$ times $\norm{\Theta A - A Q}_\infty$
where $q$ is the uniformisation rate, which is
set to the maximal exit rate $30.01$ in this case.
In addition, \autoref{fig:rsvp_ctmc} demonstrates that uniform $\alpha$,
i.e.~setting $\alpha(s) = \frac{1}{\abs{\omega(s)}}$,
works better than proportional $\alpha$ as in \eqref{eq:alphaprop} for \autoref{alg:ealmostexlump}
for this model, while we saw few differences for the other models.

To get some insights into which states are put into a common aggregate
in the RSVP model, we take a closer look at the aggregates found by \autoref{alg:ealmostexlump}
in \autoref{fig:rsvp_ctmc}. If we set $\varepsilon = 0.01$, then our
implementation identifies the aggregation with $234$ aggregates which
is dynamic-exact. There are $68$ aggregates of size $6$ corresponding
to the states for which each of the three mobile nodes is in a distinct state,
$134$ aggregates of size $3$ corresponding to the states for which two mobile nodes are in the same state,
and $32$ aggregates of size $1$ corresponding to the states for which
all mobile nodes are in the same state.

For $\varepsilon=0.1$, the identified aggregation consists of $123$ aggregates
and the error bound is still relatively low with $\norm{\Theta A - A Q}_\infty \approx 0.057$
for uniform $\alpha$ and $\Theta = A Q \Lambda$ (the median-based
scheme with uniform $\alpha$ results in $\norm{\Theta A - A Q}_\infty \approx 0.034$).
For a comparison, note that the bound for $\norm{\Theta A - A Q}_\infty$ given in \eqref{eq:almost_exlump_tau_im} would yield
$123 \cdot \frac{78 - 1}{1} \cdot 0.1 = 947.1$ in this case, which is more than $10^4$ times
higher than the actual value.
The unique maximal
aggregate of size $78$ consists of a subset of the states in the original
RSVP model where one mobile node is in state $MN_0$ (the idle state),
and the other two mobile nodes are in states $MN_1$ (waiting for network resources),
$MN_3$ (waiting for network after handover) or $MN_4$ (releasing resources).
The upper network channel is in one of the states $UNC_0$, $UNC_1$ or $UNC_2$
(so there are still free resources in the upper channel), the lower network
channel is in one of the states $LNC_0$ to $LNC_6$ (everything except for
fully loaded), and the channel monitor in one of the states $CM_0$ to $CM_5$
(the number of sessions to expire after a handover has not yet reached
its maximal allowed value). \autoref{alg:ealmostexlump} has thus aggregated
states with one idle mobile node, two mobile nodes either releasing
or requesting resources, and with the network channels not yet a full
capacity.

For $\varepsilon=1.6$, we still get the same $123$ aggregates, but for
$\varepsilon=3.2$, this number is reduced to $110$, resulting in $\norm{\Theta A - A Q}_\infty \approx 1.7$.

Apart from the randomly generated
almost aggregatable chains, \autoref{alg:ealmostexlump} performed better
than SVDdir in all our experiments.
Furthermore, we saw that \autoref{alg:ealmostexlump}
can sometimes reduce the state space with an associated error bound which
is useful in practice, at size reductions in a wide range from down to e.g.~15\% to 70\% of the original size,
the size reduction depending very much on the model. As it
only takes $\calO(km^2)$ time to calculate transient distributions
of the aggregated model with $k$ the time horizon
and $m$ the number of aggregates,
compared to $\calO(kn^2)$ for the full model, such size reductions can
significantly speed up the computation of transient distributions
(e.g.~by a factor of around 44 for a reduction to 15\% and a factor
of around 2 for a reduction to 70\%). Hence, \autoref{alg:ealmostexlump}
with its runtime of $\calO(m^4n)$
for the fast variant (which is only a crude upper bound) can be a
useful tool in practice to check if a state space reduction is possible.
On the other hand, even faster algorithms and algorithms taking into
account the initial distribution are an interesting avenue for further
research.

\section{Conclusion \& outlook}

We extended the error bounds originally derived in \cite{adaptformalagg}
to a more general and abstract setting where an aggregate no longer
necessarily is a group of states, and where each state within an aggregate
can be assigned an individual weight. We also analysed the meaning
of these bounds for CTMCs. The presented error bounds are the best possible bounds in general
for the difference between the transient distribution of an
aggregated Markov chain and the original chain. Our analysis also
showed a relation of the error bounds to existing lumpability concepts.
Surprisingly, the general case for which the correct transient distributions
of a Markov chain can be derived from the aggregated model
(an exact or dynamic-exact aggregation) had only
been considered in this context in a small part of the existing literature
(see e.g.~\cite{mcaggreactnet} or \cite{bisimwgtautom}), even though
the concept of dynamic-exactness and exactness has already appeared
under various names in the previous decades. In addition, we
showed that the error bounds can also be applied to bound the distance
of an approximated stationary distribution to stationarity.

Calculating an aggregation which results in a good approximation of
the original dynamics is difficult for general Markov chains. We compared
two algorithms which identify two different settings in which the error bounds
are low. The SVD algorithm from \cite{probalgmcagg}, augmented with the clustering by
vector direction (i.e.~SVDdir) had a much lower runtime than
the SVD approach combined with SEBA from \cite{seba}, while preserving
the quality of the returned aggregations at the same time. For almost aggregatable
Markov chains, the SVD algorithm is a good choice for identifying
aggregates. However, in most of our experiments, we saw that
\autoref{alg:ealmostexlump} (based on the concepts from \cite{exactperfequiv}) performed
better than the SVD variants, which makes it a promising alternative.
We also experimented with an alternating least deviations approach
similar to the one in \cite{approxaggregaltleastsqu}, which
did not deliver competitive results.

A detailed comparison to
the adaptive reaggregation from \cite{adaptformalagg} and further
possible aggregation algorithms would be necessary to
fully evaluate accuracy and runtime of the different possibilities.
However, from a theoretical perspective, this paper already established
that the error bounds from \cite{adaptformalagg} are a good tool to
bound the aggregation error in much more general settings than originally
envisioned, and first experiments
showed that identifying almost exactly lumpable partitions with \autoref{alg:ealmostexlump} might be
a good way to find suitable aggregations.

Another interesting topic would be to develop an efficient algorithm which directly
finds an approximate solution to $\Pi A = A P$ while taking the initial
error into account at the same time. It is not clear, however, if
such an efficient algorithm exists at all.

\bibliographystyle{elsarticle-num} 
\bibliography{references}

\begin{thebibliography}{10}
\expandafter\ifx\csname url\endcsname\relax
  \def\url#1{\texttt{#1}}\fi
\expandafter\ifx\csname urlprefix\endcsname\relax\def\urlprefix{URL }\fi
\expandafter\ifx\csname href\endcsname\relax
  \def\href#1#2{#2} \def\path#1{#1}\fi

\bibitem{ncd}
H.~A. Simon, A.~Ando, Aggregation of variables in dynamic systems, Econometric 29~(2) (1961) 111--138.
\newblock \href {https://doi.org/10.2307/1909285} {\path{doi:10.2307/1909285}}.

\bibitem{finmc}
J.~G. Kemeny, J.~L. Snell, \href{https://link.springer.com/book/9780387901923}{Finite {M}arkov Chains}, Springer, 1976.
\newline\urlprefix\url{https://link.springer.com/book/9780387901923}

\bibitem{fincharweaklumpctmc}
G.~Rubino, B.~Sericola, A finite characterization of weak lumpable {M}arkov processes. {P}art {I}{I}: The continuous time case, Stochastic Processes and their Applications 45~(1) (1993) 115--125.
\newblock \href {https://doi.org/10.1016/0304-4149(93)90063-A} {\path{doi:10.1016/0304-4149(93)90063-A}}.

\bibitem{exactordlump}
P.~Buchholz, Exact and ordinary lumpability in finite {M}arkov chains, Journal of Applied Probability 31~(1) (1994) 59--75.
\newblock \href {https://doi.org/10.2307/3215235} {\path{doi:10.2307/3215235}}.

\bibitem{adaptformalagg}
A.~Abate, R.~Andriushchenko, M.~\v{C}e\v{s}ka, M.~Kwiatkowska, Adaptive formal approximations of {M}arkov chains, Performance Evaluation 148~(102207) (2021).
\newblock \href {https://doi.org/10.1016/j.peva.2021.102207} {\path{doi:10.1016/j.peva.2021.102207}}.

\bibitem{mcaggreactnet}
A.~Ganguly, T.~Petrov, H.~Koeppl, Markov chain aggregation and its applications to combinatorial reaction networks, Journal of Mathematical Biology 69~(3) (2014) 767--797.
\newblock \href {https://doi.org/10.1007/s00285-013-0738-7} {\path{doi:10.1007/s00285-013-0738-7}}.

\bibitem{probalgmcagg}
A.~Bittracher, C.~Sch\"utte, A probabilistic algorithm for aggregating vastly undersampled large {M}arkov chains, Physica D: Nonlinear Phenomena 416~(132799) (2021).
\newblock \href {https://doi.org/10.1016/j.physd.2020.132799} {\path{doi:10.1016/j.physd.2020.132799}}.

\bibitem{exactperfequiv}
P.~Buchholz, Exact performance equivalence: An equivalence relation for stochastic automata, Theoretical Computer Science 215 (1999) 263--287.
\newblock \href {https://doi.org/10.1016/S0304-3975(98)00169-8} {\path{doi:10.1016/S0304-3975(98)00169-8}}.

\bibitem{matrixcomputations}
G.~H. Golub, C.~F.~V. Loan, Matrix Computations, 4th Edition, Johns Hopkins University Press, 2013.
\newblock \href {https://doi.org/10.56021/9781421407944} {\path{doi:10.56021/9781421407944}}.

\bibitem{realanalysis}
H.~L. Royden, Real Analysis, 3rd Edition, Collier Macmillan, 1988.

\bibitem{markovbounds}
J.~Ledoux, L.~Truffet, Markovian bounds on functions of finite {M}arkov chains, Advances in Applied Probability 33~(2) (2001) 505--519.
\newblock \href {https://doi.org/10.1017/S0001867800010910} {\path{doi:10.1017/S0001867800010910}}.

\bibitem{bisimwgtautom}
P.~Buchholz, Bisimulation relations for weighted automata, Theoretical Computer Science 393 (2008) 109--123.
\newblock \href {https://doi.org/10.1016/j.tcs.2007.11.018} {\path{doi:10.1016/j.tcs.2007.11.018}}.

\bibitem{foxglynn}
B.~L. Fox, P.~W. Glynn, Computing poisson probabilities, Communications of the ACM 31~(4) (1988) 440--445.
\newblock \href {https://doi.org/10.1145/42404.42409} {\path{doi:10.1145/42404.42409}}.

\bibitem{seba}
G.~Froyland, C.~P. Rock, K.~Sakellariou, Sparse eigenbasis approximation: Multiple feature extraction across spatiotemporal scales with application to coherent set identification, Communications in Nonlinear Science and Numerical Simulation 77 (2019) 81--107.
\newblock \href {https://doi.org/10.1016/j.cnsns.2019.04.012} {\path{doi:10.1016/j.cnsns.2019.04.012}}.

\bibitem{approxaggregaltleastsqu}
P.~Buchholz, J.~Kriege, Approximate aggregation of markovian models using alternating least squares, Performance Evaluation 73 (2014) 73--90.
\newblock \href {https://doi.org/10.1016/j.peva.2013.09.001} {\path{doi:10.1016/j.peva.2013.09.001}}.

\bibitem{prokgeneexpr}
A.~M. Kierzek, J.~Zaim, P.~Zielenkiewicz, The effect of transcription and translation initiation frequencies on the stochastic fluctuations in prokaryotic gene expression*, Journal of Biological Chemistry 276~(11) (2001) 8165--8171.
\newblock \href {https://doi.org/10.1074/jbc.M006264200} {\path{doi:10.1074/jbc.M006264200}}.

\bibitem{stochsimcouplchem}
D.~T. Gillespie, Exact stochastic simulation of coupled chemical reactions, The Journal of Physical Chemistry 81~(25) (1977) 2340--2361.
\newblock \href {https://doi.org/10.1021/j100540a008} {\path{doi:10.1021/j100540a008}}.

\bibitem{modcheckdependability}
B.~R. Haverkort, H.~Hermanns, J.-P. Katoen, On the use of model checking techniques for dependability evaluation, in: Proceedings of the 19th IEEE Symposium on Reliable Distributed Systems, IEEE, 2000, pp. 228--237.
\newblock \href {https://doi.org/10.1109/RELDI.2000.885410} {\path{doi:10.1109/RELDI.2000.885410}}.

\bibitem{rsvp}
H.~Wang, D.~I. Laurenson, J.~Hillston, Evaluation of {RSVP} and mobility-aware {RSVP} using performance evaluation process algebra, 2008 IEEE International Conference on Communications (2008) 192--197\href {https://doi.org/10.1109/ICC.2008.43} {\path{doi:10.1109/ICC.2008.43}}.

\end{thebibliography}

\end{document}